\documentclass[a4paper, reqno]{amsart}
\usepackage{geometry}
\usepackage{amssymb}
\usepackage{fancyhdr}
\usepackage{mathrsfs}
\usepackage{mathtools}
\usepackage{stmaryrd}
\usepackage{amsmath}
\usepackage{tikz-cd}
\usepackage{tikz}
\usepackage{pifont}
\usepackage{color}
\usepackage{caption}
\usepackage[all]{xy}
\usepackage{type1cm}
\usepackage{hyperref}
\usepackage{color}
\hypersetup{colorlinks=true,
     breaklinks=true,
     linkcolor=blue,
      pageanchor=true}

\newtheorem{thm}{Theorem}[section]

\newtheorem{cor}[thm]{Corollary}
\newtheorem{lem}[thm]{Lemma}
\newtheorem{prop}[thm]{Proposition}

\newtheorem{claim}[thm]{Claim}
\theoremstyle{definition}
\newtheorem{defn}[thm]{Definition}
\newtheorem{exam}[thm]{Example}
\newtheorem{rem}[thm]{Remark}

\title[Auslander-Reiten-Serre duality revisited]{Auslander-Reiten-Serre duality revisited}
\author{Ji-Wei He and Jixing Pan}
\subjclass{16G10, 16G70, 18G80.}
\keywords{Auslander-Reiten-Serre duality, extriangulated category, relative structure, exact functor.}
\address{Ji-Wei He: School of Mathematics, Hangzhou Normal University, Hangzhou Zhejiang 311121, PR China}
\email{jwhe@hznu.edu.cn}
\address{J. Pan: School of Mathematics, Nanjing University, Nanjing Jiangsu 210093, PR China.\\
	School of Mathematics, Hangzhou Normal University, Hangzhou Zhejiang 311121, PR China}
\email{20243024@hznu.edu.cn}

\begin{document}

\begin{abstract}
	Let $(\mathcal{C},\mathbb{E},\mathfrak{s})$ be an extriangulated category with a right Auslander-Reiten-Serre (ARS for short) duality $(\tau, \eta)$ in the sense of Iyama, Nakaoka and Palu. We show $(\tau, \eta)$ induces right ARS dualities on the relative theories of $(\mathcal{C},\mathbb{E},\mathfrak{s})$. Under relative structures, we show that the functor $\tau$ is indeed an exact functor between extriangulated categories and preserves almost split exangles. Finally, we give some applications and examples on these results. For instance, we generalize a recent result by A. Hubery to a categorical framework.
\end{abstract}

\maketitle

\section{Introduction}

Auslander-Reiten theory plays an important role in representation theory of algebras. It started in 1970s and is still an active area today. As the notion of extriangulated categories was invented a decade ago by Nakaoka and Palu \cite{NP} to unify exact categories and triangulated categories, Iyama, Nakaoka and Palu \cite{INP} generalized the classical Auslander-Reiten theory to extriangulated categories. One of the advantages of using extriangulated category is that it is closed under several operations such as taking extension closed subcategories \cite{NP}, ideal quotients by subcategories consisting of projective-injective objects \cite{NP} and relative theories \cite{HLN,ASo,DRSSK}. This allows us to break the barriers between different theories and work under a unified framework. Let $R$ be a commutative artin ring and $(\mathcal{C},\mathbb{E},\mathfrak{s})$ be an $R$-linear extriangulated category. In \cite{INP}, the authors introduced the notion of right Auslander-Reiten-Serre (ARS) dualities,  that is, a pair $(\tau,\eta)$ in which $\tau:\underline{\mathcal{C}}\rightarrow \overline{\mathcal{C}}$ is an $R$-linear functor and $\eta_{A,B}:\underline{\mathcal{C}}(A,B)\simeq \mathbb{D}\mathbb{E}(B,\tau A)$ is a binatural isomorphism. If $\tau$ is an equivalence, then the pair is called an ARS duality. They proved that under certain conditions, the existence of ARS duality is equivalent to that of almost split extensions \cite[Theorem 3.6]{INP}. They also discussed the induced almost split extensions in relative theories, ideal quotients and extension-closed subcategories. Inspired by their work, we mainly focus on right ARS dualities instead of almost split extensions. Suppose $(\mathcal{C},\mathbb{E},\mathfrak{s})$ is Ext-finite and has a right ARS duality $(\tau,\eta)$. First, we discuss the induced right ARS dualities by relative theories.

\begin{thm}(Theorem \ref{main thm1})
	Let $\mathbb{F}\xhookrightarrow{\iota} \mathbb{E}$ be a closed subfunctor. Then $(\tau,\eta)$ induces a right ARS duality $(\tau_{\mathbb{F}},\eta^{\mathbb{F}})$ on the relative theory $(\mathcal{C},\mathbb{F},\mathfrak{s}|_{\mathbb{F}})$. Moreover, there are commutative diagrams as follows for any $A,B\in \mathcal{C}$.
	\[\begin{tikzcd}
		\underline{\mathcal{C}} \arrow[r,"\tau"] \arrow[d,"\pi"] & \overline{\mathcal{C}} \arrow[d,"\pi'"] \\
		\mathcal{C}/\mathcal{P}_{\mathbb{F}} \arrow[r,"\tau_{\mathbb{F}}"] & \mathcal{C}/\mathcal{I}_{\mathbb{F}}
	\end{tikzcd}\text{ and }
	\begin{tikzcd}
		\underline{\mathcal{C}}(A,B) \arrow[r,"\eta_{A,B}","\sim" swap] \arrow[d,"\pi"] & \mathbb{D}\mathbb{E}(B,\tau A) \arrow[d,"\mathbb{D}(\iota)"] \\
		\mathcal{C}/\mathcal{P}_{\mathbb{F}}(A,B) \arrow[r,"\eta^{\mathbb{F}}_{A,B}","\sim" swap] & \mathbb{D}\mathbb{F}(B,\tau_{\mathbb{F}} A)
	\end{tikzcd}\]
\end{thm}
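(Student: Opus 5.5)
The plan is to keep the objects of $\tau$ and only shrink the duality. The left-hand square forces $\tau_{\mathbb{F}}A=\tau A$ on objects, with $\tau_{\mathbb{F}}$ induced by $\tau$. So the real content is a binatural isomorphism $\eta^{\mathbb{F}}_{A,B}\colon \mathcal{C}/\mathcal{P}_{\mathbb{F}}(A,B)\to \mathbb{D}\mathbb{F}(B,\tau A)$ making the right-hand square commute. I will therefore \emph{define} $\eta^{\mathbb{F}}$ as the map induced by $\mathbb{D}(\iota)\circ\eta_{A,B}$. The proof then reduces to three facts about this composite.

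\emph{Step 1: the composite is surjective.} Since $\mathcal{C}$ is Ext-finite, $\mathbb{F}(B,\tau A)\subseteq\mathbb{E}(B,\tau A)$ is an inclusion of finite length $R$-modules. Hence $\mathbb{D}(\iota)$ is surjective, and so is $\mathbb{D}(\iota)\circ\eta_{A,B}$.

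\emph{Step 2: it vanishes on $\mathcal{P}_{\mathbb{F}}(A,B)$.} Suppose $f=gh$ with $h\colon A\to P$, $g\colon P\to B$ and $P$ an $\mathbb{F}$-projective. For $\delta\in\mathbb{F}(B,\tau A)$, binaturality of $\eta$ in the second variable gives $\eta_{A,B}(f)(\delta)=\eta_{A,P}(h)(g^{*}\delta)$. Here $g^{*}\delta\in\mathbb{F}(P,\tau A)=0$, so the pairing is zero. This shows $\mathcal{P}_{\mathbb{F}}(A,B)\subseteq K(A,B):=\ker(\mathbb{D}(\iota)\circ\eta_{A,B})$.

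\emph{Step 3 (main obstacle): the reverse inclusion $K(A,B)\subseteq\mathcal{P}_{\mathbb{F}}(A,B)$.} A morphism $f\in K(A,B)$ is one for which the linear form $\eta(f)$ kills every $\mathbb{F}$-extension $\delta\in\mathbb{F}(B,\tau A)$.
\begin{itemize}
\item First I would transport this condition along naturality in both variables: for every $u\colon B\to B'$ and every $\delta'\in\mathbb{F}(B',\tau A)$ one has $\eta(uf)(\delta')=\eta(f)(u^{*}\delta')=0$. This shows that $K(A,-)$ is a subfunctor of $\underline{\mathcal{C}}(A,-)$. Because $\mathbb{F}$ is closed, $\mathbb{F}(-,\tau A)$ is a subfunctor of $\mathbb{E}(-,\tau A)$ stable under all the realization operations, so $K(A,-)$ is also stable under composition on the left.
\item Next I would test $f$ against $\mathbb{F}$-conflations. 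Given any $\mathbb{F}$-conflation $X\to Y\xrightarrow{p}B$ with class $\theta$, I want to show that $f$ factors through $p$ up to morphisms in $\mathcal{P}_{\mathbb{E}}$. The argument would use the long exact sequence $\mathcal{C}(A,Y)\to\mathcal{C}(A,B)\to\mathbb{E}(A,X)$ and dualize it through $\eta$. The key input is that the connecting image $f^{*}\theta$ pairs trivially against $\mathbb{F}(-,\tau A)$ via $\eta$, and this should force $f^{*}\theta=0$ in $\mathbb{F}(A,X)$. In other words, $\mathbb{F}(f,-)=0$ on the nose.
\item Finally I would conclude with the standard Auslander--Solberg-type characterization: a morphism $f$ with $\mathbb{F}(f,-)=0$ factors through an $\mathbb{F}$-projective. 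To get this I would take $f^{*}$ of a suitable $\mathbb{F}$-deflation onto $B$ (or onto $A$), realize the split conflation, and read off the factorization. The existence of such deflations is where any extra hypothesis on $\mathbb{F}$ would be needed, for instance enough projectives or Krull--Schmidt. Absent that, I would reduce to $A$ indecomposable and non-$\mathbb{F}$-projective, and use the almost split $\mathbb{E}$-extension together with \cite[Theorem 3.6]{INP}.
\end{itemize}
This is the step I expect to be hardest, and the one where the closedness of $\mathbb{F}$ is genuinely used.

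\emph{Step 4: assembling the duality.} Once $K=\mathcal{P}_{\mathbb{F}}$ is established, $\eta^{\mathbb{F}}$ is a well-defined isomorphism by Steps 1--3. It is binatural because $\eta$ and $\iota$ are. Moreover $\tau$ sends maps factoring through $\mathbb{F}$-projectives to maps that vanish in $\mathcal{C}/\mathcal{I}_{\mathbb{F}}$: dualizing Step 2 in the first variable shows that $\tau(\mathcal{P}_{\mathbb{F}})$ is killed by every $\mathbb{F}(B,-)$, and hence lies in $\mathcal{I}_{\mathbb{F}}$. Consequently $\tau$ descends to $\tau_{\mathbb{F}}$, and both squares commute by construction.
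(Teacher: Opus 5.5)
Your skeleton is the paper's: define $\eta^{\mathbb{F}}$ as the map induced by $\mathbb{D}(\iota)\circ\eta_{A,B}$, get surjectivity from $\mathbb{D}(\iota)$, identify the kernel, then descend $\tau$. However, Step 3 rests on a misreading of the ideals.

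In this paper, following \cite[Definition 1.21]{INP}, $\mathcal{P}_{\mathbb{F}}$ is by definition the ideal of all $f$ with $\mathbb{F}(f,-)=0$, and $\mathcal{I}_{\mathbb{F}}$ is the ideal of all $g$ with $\mathbb{F}(-,g)=0$. It is not the ideal of morphisms factoring through $\mathbb{F}$-projective objects. The two agree when $(\mathcal{C},\mathbb{F},\mathfrak{s}|_{\mathbb{F}})$ has enough projectives, but nothing of the kind is assumed. So your last bullet, which upgrades $\mathbb{F}(f,-)=0$ to a factorization through an $\mathbb{F}$-projective, cannot be carried out here: you have no $\mathbb{F}$-deflations from $\mathbb{F}$-projectives onto $B$, and \cite[Theorem 3.6]{INP} concerns existence of almost split extensions, not such factorizations. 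With your reading the theorem would in fact be a stronger statement than the one proved.

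With the correct definition that bullet is simply unnecessary, since $\mathbb{F}(f,-)=0$ already says $f\in\mathcal{P}_{\mathbb{F}}(A,B)$. Two related fixes follow. Step 2 should be the one-line computation $\eta_{A,B}(\underline{f})(\delta)=\eta_A(f^{*}\delta)=0$ for every $f$ with $\mathbb{F}(f,-)=0$. Step 4 should actually verify $\tau(\underline{f})_{*}\alpha=0$ for $\alpha\in\mathbb{F}(X,\tau A)$. One way is $\eta_{B,X}(\underline{b})(\tau(\underline{f})_{*}\alpha)=\eta_A(f^{*}b^{*}\alpha)=0$ for all $b\colon B\to X$, combined with surjectivity of $\eta_{B,X}$ and the fact that $J$ is a cogenerator.

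The real content is then your middle bullet, and there the decisive step (``this should force $f^{*}\theta=0$'') is asserted, not proved. The hypothesis on $f$ directly gives $\eta_A(c_{*}f^{*}\theta)=\eta_{A,B}(\underline{f})(c_{*}\theta)=0$ for every $c\colon X\to\tau A$, because $c_{*}\theta\in\mathbb{F}(B,\tau A)$. To conclude $f^{*}\theta=0$ you need left nondegeneracy of the pairing $\mathbb{E}(A,X)\times\overline{\mathcal{C}}(X,\tau A)\to J$, $(\varepsilon,\overline{c})\mapsto\eta_A(c_{*}\varepsilon)$. This does not come for free from $\eta$ being bijective. The paper proves it first (Lemma \ref{equi def}, using a non-retraction and ${\rm (ET3)^{op}}$) and then runs exactly this argument.

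Your dualization idea can also be completed without that lemma, but it needs an observation you did not make. By Lemma \ref{exact seq}, $\ker\bigl(p^{*}\colon\mathbb{E}(B,\tau A)\to\mathbb{E}(Y,\tau A)\bigr)=\{c_{*}\theta\mid c\in\mathcal{C}(X,\tau A)\}\subseteq\mathbb{F}(B,\tau A)$. Hence $\eta_{A,B}(\underline{f})$ vanishes on $\ker p^{*}$, so by injectivity of $J$ it equals $\mu\circ p^{*}$ for some $\mu=\eta_{A,Y}(\underline{g})$. Naturality and injectivity of $\eta_{A,B}$ then give $\underline{f}=\underline{pg}$, whence $f^{*}\theta=g^{*}p^{*}\theta=0$.

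In either version only the subfunctor property of $\mathbb{F}$ is used. Closedness matters only to make the relative theory extriangulated, so your first bullet is beside the point.
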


This theorem enables us to deduce the Auslander-Reiten translations $\tau_{[m]}$ in $m$-extended module categories introduced by Zhou \cite{Zhou} in a different way (Section \ref{examples}). 

By investigating the extriangulated structure in the above theorem, we obtain the following result. It is analogous to the property of Serre functors in triangulated categories \cite{BK}. Moreover, we remark that some tricks used in the proof originate from the work by M. Van den Bergh \cite[Theorem A.4.4]{Bo}.

\begin{thm}(Theorem \ref{main thm2})
	Suppose $(\mathcal{C},\mathbb{E},\mathfrak{s})$ is weakly idempotent complete and has enough projectives $\mathcal{Q}$ and enough injectives $\mathcal{J}$ which are both functorially finite. Define
	\[(\underline{\mathcal{C}},\underline{\mathbb{E}^{\mathcal{Q}}},\underline{\mathfrak{s}|_{\mathbb{E}^{\mathcal{Q}}}})\,(\text{resp. }(\overline{\mathcal{C}},\overline{\mathbb{E}_{\mathcal{J}}},\overline{\mathfrak{s}|_{\mathbb{E}_{\mathcal{J}}}}))\]
	to be the ideal quotient of the relative theory $(\mathcal{C},\mathbb{E}^{\mathcal{Q}},\mathfrak{s}|_{\mathbb{E}^{\mathcal{Q}}})\,(\text{resp. } (\mathcal{C},\mathbb{E}_{\mathcal{J}},\mathfrak{s}|_{\mathbb{E}_{\mathcal{J}}}))$ by $\mathcal{Q}\,(\text{resp. } \mathcal{J})$. Then there exists a natural isomorphism $\phi:\underline{\mathbb{E}^{\mathcal{Q}}}\xRightarrow{\sim} \overline{\mathbb{E}_{\mathcal{J}}}\circ (\tau^{\rm op}\times \tau)$ such that
	\[(\tau,\phi):(\underline{\mathcal{C}},\underline{\mathbb{E}^{\mathcal{Q}}},\underline{\mathfrak{s}|_{\mathbb{E}^{\mathcal{Q}}}})\rightarrow (\overline{\mathcal{C}},\overline{\mathbb{E}_{\mathcal{J}}},\overline{\mathfrak{s}|_{\mathbb{E}_{\mathcal{J}}}})\]
	is an exact functor which is fully faithful.
\end{thm}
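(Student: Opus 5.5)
The plan is to reduce everything to the right ARS duality $(\tau_{\mathbb{F}},\eta^{\mathbb{F}})$ from Theorem \ref{main thm1} with $\mathbb{F}=\mathbb{E}^{\mathcal{Q}}$, combined with dimension shifting along the syzygy and cosyzygy conflations, in the spirit of Van den Bergh's argument \cite[Theorem A.4.4]{Bo}.

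\textbf{Step 1: identify the relative duality.} First I will show that $\mathcal{P}_{\mathbb{E}^{\mathcal{Q}}}=\mathcal{Q}$ and $\mathcal{I}_{\mathbb{E}_{\mathcal{J}}}=\mathcal{J}$; this is where enough projectives, enough injectives and weak idempotent completeness enter. Second, I will compare the duality on the relative theory with $\tau$ itself. The goal is
\[\underline{\mathcal{C}}(A,B)\simeq \mathbb{D}\mathbb{E}^{\mathcal{Q}}(B,\tau A),\]
with the right-hand side naturally isomorphic to $\mathbb{D}\mathbb{E}_{\mathcal{J}}(B,\tau A)$ after passing to the relevant quotients. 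The first commutative square of Theorem \ref{main thm1} is what lets me replace $\tau_{\mathbb{F}}$ by $\tau$ composed with the projection.

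\textbf{Step 2: dimension shifting.} For each $C$, functorial finiteness gives $\mathfrak{s}|_{\mathbb{E}^{\mathcal{Q}}}$-conflations $\Omega C\to Q_C\to C$ with $Q_C\in\mathcal{Q}$. Dually, for each $X$ there are $\mathfrak{s}|_{\mathbb{E}_{\mathcal{J}}}$-conflations $X\to J^X\to \Sigma X$ with $J^X\in\mathcal{J}$. The long exact sequences then give natural isomorphisms
\[\underline{\mathbb{E}^{\mathcal{Q}}}(C,A)\simeq \underline{\mathcal{C}}(\Omega C,A)\quad\text{and}\quad \overline{\mathbb{E}_{\mathcal{J}}}(X,Y)\simeq \overline{\mathcal{C}}(X,\Sigma Y).\]
The key claim is that $\tau$ carries the syzygy conflation of $C$ to a cosyzygy-type conflation, i.e. $\tau C\simeq \Sigma\,\tau\Omega C$ in $\overline{\mathcal{C}}$. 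To prove it, I will apply $\eta$ to the conflation $\Omega C\to Q_C\to C$. Using $\underline{\mathcal{C}}(Q_C,-)=0$ and $\mathbb{E}^{\mathcal{Q}}(-,\tau Q_C)=0$, the contravariant long exact sequence turns into a covariant one for $\tau$. Yoneda then identifies $\tau C$ with the cone of $\tau\Omega C\to J^{\tau\Omega C}$.

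\textbf{Step 3: full faithfulness.} I will chain the duality with itself, as for Serre functors:
\[\underline{\mathcal{C}}(A,B)\simeq \mathbb{D}\mathbb{E}^{\mathcal{Q}}(B,\tau A)\simeq \mathbb{D}\underline{\mathcal{C}}(\Omega B,\tau A)\simeq \overline{\mathcal{C}}(\tau A,\tau B).\]
The last isomorphism again uses $\eta$, in the form $\underline{\mathcal{C}}(\Omega B,\tau A)\simeq\mathbb{D}\mathbb{E}(\tau A,\tau\Omega B)$, followed by Step 2, which gives $\mathbb{E}_{\mathcal{J}}(\tau A,\tau\Omega B)\simeq\overline{\mathcal{C}}(\tau A,\tau B)$. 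I must then check that this composite is exactly the map induced by $\tau$ on morphisms. I expect this to be the main obstacle: it requires a careful diagram chase through the binaturality of $\eta$, in the style of Van den Bergh's trick.

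\textbf{Step 4: exactness.} I will define $\phi_{C,A}$ as the composite
\[\underline{\mathbb{E}^{\mathcal{Q}}}(C,A)\simeq\underline{\mathcal{C}}(\Omega C,A)\xrightarrow{\tau}\overline{\mathcal{C}}(\tau\Omega C,\tau A)\simeq\overline{\mathbb{E}_{\mathcal{J}}}(\tau C,\tau A).\]
By Step 3 the middle map is bijective, so $\phi_{C,A}$ is an isomorphism, and it is natural by construction. Finally I will verify the realization condition. A conflation in the ideal quotient realizing $\delta$ is obtained, up to equivalence, from the syzygy conflation by pushout along a representative $\Omega C\to A$. Since $\tau$ sends that syzygy conflation to the cosyzygy conflation (Step 2), and the pushout corresponds to a pullback of the cosyzygy conflation, $\tau$ sends $\mathfrak{s}$-triangles realizing $\delta$ to triangles realizing $\phi(\delta)$. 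This gives the exact functor $(\tau,\phi)$.
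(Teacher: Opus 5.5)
\textbf{There is a genuine gap: your plan rests on a false structural claim, and everything after Step 1 inherits it.}

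In $(\mathcal{C},\mathbb{E}^{\mathcal{Q}},\mathfrak{s}|_{\mathbb{E}^{\mathcal{Q}}})$ the objects of $\mathcal{Q}$ are projective-injective. They are in general \emph{not} all the projectives, and there is no $\mathfrak{s}|_{\mathbb{E}^{\mathcal{Q}}}$-conflation $\Omega C\to Q_C\to C$ with $Q_C\in\mathcal{Q}$ for every $C$. By Lemma \ref{exact seq}, the $\mathbb{E}$-syzygy conflation lies in $\mathbb{E}^{\mathcal{Q}}$ if and only if $\Omega C\to Q_C$ is a left $\mathcal{Q}$-approximation, and functorial finiteness does not give you this.

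Concretely, take $\Lambda=kA_2$. Let $S_2$ be the simple projective and $S_1$ the simple injective module, so the AR sequence is $0\to S_2\to P_1\to S_1\to 0$. All hypotheses hold, and the only non-split extensions lie in $\mathrm{Ext}^1_\Lambda(S_1,S_2)$. Since $S_2$ is projective and $\delta^{\sharp}_{S_2}(\mathrm{id}_{S_2})=\delta$, we get $\mathbb{E}^{\mathrm{add}\Lambda}=0$. Consequently:
\begin{itemize}
\item Every morphism lies in $\mathcal{P}_{\mathbb{E}^{\mathcal{Q}}}$, whereas $\underline{\mathrm{End}}(S_1)=k$. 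So $\mathcal{P}_{\mathbb{E}^{\mathcal{Q}}}\neq[\mathcal{Q}]$.
\item Your target $\underline{\mathcal{C}}(A,B)\simeq\mathbb{D}\mathbb{E}^{\mathcal{Q}}(B,\tau A)$ fails for $A=B=S_1$. The correct statement from Theorem \ref{main thm1} has $\mathcal{C}/\mathcal{P}_{\mathbb{E}^{\mathcal{Q}}}$ on the left.
\item Your key claim $\tau C\simeq\Sigma\tau\Omega C$ fails: $\tau S_1=S_2\neq 0$ in $\overline{\mathcal{C}}$, while $\Omega S_1=S_2$ is projective, so $\tau\Omega S_1=0$.
\end{itemize}
Dually, $\mathbb{E}_{\mathcal{J}}=0$ here too, so $\mathcal{I}_{\mathbb{E}_{\mathcal{J}}}\neq[\mathcal{J}]$. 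Hence the dimension shifts of Step 2, the chain in Step 3, your definition of $\phi$ in Step 4, and the pushout description of conflations all have no foundation.

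\textbf{The missing idea.} The usable bridge sits on the other side of each relative theory. Take a left $\mathcal{Q}$-approximation $X\to Q$ and a conflation $X\to I_X\to Y$ with $I_X\in\mathcal{J}$. Lemma \ref{weak pushout} then gives $\mathfrak{s}|_{\mathbb{E}^{\mathcal{Q}}}$-conflations $X\to Q\oplus I_X\to X'$. Hence $\mathcal{I}_{\mathbb{E}^{\mathcal{Q}}}=\mathcal{P}_{\mathbb{E}_{\mathcal{J}}}=[\mathrm{add}(\mathcal{Q}\cup\mathcal{J})]$. Chaining the two relative dualities through this common quotient,
$\mathbb{E}^{\mathcal{Q}}(C,A)\simeq\mathbb{D}(\mathcal{C}/\mathcal{P}_{\mathbb{E}_{\mathcal{J}}})(A,\tau C)\simeq\mathbb{E}_{\mathcal{J}}(\tau C,\tau A)$,
defines $\phi$. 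It is characterized by $\eta_C([f]_*\delta)=\eta_A([f]^*\phi(\delta))$.

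Full faithfulness is not the obstacle you expect; it follows directly from the right ARS duality \cite[Remark 3.5]{INP}.

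The real work is the realization condition, and the paper handles it without any (co)syzygies:
\begin{itemize}
\item Weak idempotent completeness makes $\tau(\underline{b})$ a deflation. Complete it to a triangle $A'\to\tau B\to\tau C\stackrel{\beta}\dashrightarrow$.
\item A morphism $\overline{h}:A'\to\tau A$ with $\tau(\underline{a})\overline{h}=\overline{\alpha}$ and $\overline{h}_*\beta=\phi(\delta)$ is obtained by Van den Bergh's trick. A functional that vanishes on the kernel of $(\underline{a}^{*},\,-\beta_\sharp)$ is extended into the injective $R$-module $J$. The required vanishing is verified using $({\rm ET4})^{\rm op}$.
\item The five lemma together with \cite[Theorem 4.1]{INP} then shows that $\overline{h}$ is an isomorphism.
\end{itemize}
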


We show the functor $\tau$ also preserves almost split extensions (Proposition \ref{tau preserve almost split extension}) even though it may not be an equivalence.

Finally, we give some applications and examples. Recently, Hubery \cite{H} investigated the invariance of the Auslander-Reiten formula under the Auslander-Reiten translation for hereditary algebras. As an application of our results, we generalize it to the following one.

\begin{thm}(Theorem \ref{invariance})
	Under the setting of the above theorem, consider the non-degenerate bilinear form
	\[\{-,-\}: \mathbb{E}(C,A)\times \overline{\mathcal{C}}(A,\tau C)\rightarrow J\]
	given by $\eta$. Then for any $\delta \in \mathbb{E}^{\mathcal{Q}}(C,A)$ and $f\in \mathcal{C}(A,\tau C)$, we have
	\[\{\delta,\overline{f}\}=\{\phi(\delta),\tau(\underline{f})\}\]
\end{thm}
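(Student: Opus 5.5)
The plan is to reduce the identity to a single ``trace'' element and then compare the two traces using the explicit construction of $\phi$ from the proof of Theorem \ref{main thm2}.

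\textbf{Step 1: write the form as a trace.} By binaturality of $\eta$, the bilinear form is
\[\{\delta,\overline{f}\}=\eta_{C,C}(\underline{1_C})(f_*\delta),\qquad \delta\in\mathbb{E}(C,A),\ f\in\mathcal{C}(A,\tau C).\]
Here $f_*\delta\in\mathbb{E}(C,\tau C)$, and $\theta_C:=\eta_{C,C}(\underline{1_C})\in\mathbb{D}\mathbb{E}(C,\tau C)$ plays the role of the trace. Likewise
\[\{\phi(\delta),\tau(\underline f)\}=\theta_{\tau C}\big(\tau(\underline f)_*\phi(\delta)\big),\]
where $\theta_{\tau C}$ is the analogous functional. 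Since $\tau(\underline f)\colon\tau A\to\tau\tau C$ and $\phi(\delta)\in\overline{\mathbb{E}_{\mathcal J}}(\tau C,\tau A)$, this pairing lives in the quotient. I will first check that this pairing is well defined, i.e. that $\theta_{\tau C}$ vanishes on the relevant ideal. This should follow from $\mathbb{E}_{\mathcal J}$-extensions being annihilated by morphisms factoring through $\mathcal J$.

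\textbf{Step 2: reduce to $A=\tau C$.} Naturality of $\phi$ in the second variable gives $\tau(\underline f)_*\phi(\delta)=\phi(f_*\delta)$. Also $f_*\delta\in\mathbb{E}^{\mathcal Q}(C,\tau C)$, since $\mathbb{E}^{\mathcal Q}$ is a subfunctor. So it suffices to prove the trace identity
\[\theta_C(\varepsilon)=\theta_{\tau C}(\phi(\varepsilon))\qquad\text{for all }\varepsilon\in\mathbb{E}^{\mathcal Q}(C,\tau C).\]

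\textbf{Step 3: the trace identity.} This is the heart of the argument and the main obstacle. It requires unwinding how $\phi$ was built in Theorem \ref{main thm2}. I expect the following description. Realize $\varepsilon$ by a conflation $\tau C\to E\to C$. Present $C$ by $\Omega C\to Q\to C$ with $Q\in\mathcal Q$, so that $\varepsilon$ is the pushout of this presentation along some $g\colon\Omega C\to\tau C$. Dually, use an injective copresentation on the $\tau$ side. Then $\phi(\varepsilon)$ is obtained by applying $\tau$ to these data and using the natural isomorphisms relating $\tau\Omega$ and $\Sigma^{-1}\tau$ that come out of $\eta$, following the Van den Bergh trick.

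With this description I will compute both sides via binaturality of $\eta$:
\[\theta_C(\varepsilon)=\theta_C(g_*\delta_C)=\eta_{C,C}(\underline{1_C})(g_*\delta_C),\]
where $\delta_C$ is the class of the projective presentation. I then move $g$ across, getting $\eta_{\Omega C,\cdot}$ evaluated at the corresponding morphism. The aim is to rewrite both sides as the value of $\eta$ on the same element $\underline g\in\underline{\mathcal C}(\Omega C,\tau C)$, using the commutative squares defining $\phi$. If the bookkeeping is cleaner in the other variable, I would instead pull back along the first variable, using the dual (injective) presentation of $\tau C$.

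The expected difficulty is keeping track of signs and of which quotient each morphism lives in. I expect that the defining property of $\phi$ (its compatibility with $\eta$ through connecting morphisms) is precisely what makes the two traces agree.
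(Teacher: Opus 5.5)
Your Steps 1 and 2 are fine. The pairing is well defined because morphisms in $\mathcal{I}_{\mathbb{E}}$ act trivially on $\mathbb{E}$ covariantly. Naturality of $\phi$ in the second variable does give $\tau(\underline f)_*\phi(\delta)=\phi(f_*\delta)$, so the statement reduces to its own special case $A=\tau C$, $f=\mathrm{id}_{\tau C}$.

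\textbf{The gap is Step 3.} You rightly call it the heart, but you do not prove it. Instead you substitute a guessed description of $\phi$: pushouts of projective presentations, isomorphisms relating $\tau\Omega$ and $\Sigma^{-1}\tau$, and a ``Van den Bergh trick''. This is not how $\phi$ is defined in Theorem~\ref{main thm2}. It is never shown to agree with that $\phi$, and it relies on isomorphisms that are not established in this generality. The rest of Step 3 (``move $g$ across \dots\ rewrite both sides as the value of $\eta$ on the same element'') states an aim, not an argument. In the paper, the Van den Bergh-type argument appears only in the exactness part (step (5) of the proof of Theorem~\ref{main thm2}), not in the construction of $\phi$.

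\textbf{The missing idea is that $\phi$ is defined purely by duality.} By construction, $\phi_{C,A}$ is the composite
$\mathbb{E}^{\mathcal Q}(C,A)\simeq\mathbb{D}(\mathcal C/\mathcal I_{\mathbb E^{\mathcal Q}})(A,\tau C)=\mathbb{D}(\mathcal C/\mathcal P_{\mathbb E_{\mathcal J}})(A,\tau C)\simeq\mathbb{E}_{\mathcal J}(\tau C,\tau A)$
of the isomorphisms from Lemma~\ref{equi def} and Theorem~\ref{main thm1}. Unwinding them says exactly that $\phi_{C,A}(\delta)$ is the unique element satisfying $\eta_A([g]^*\phi_{C,A}(\delta))=\eta_C([g]_*\delta)$ for all $[g]\in\mathcal C/\mathcal P_{\mathbb E_{\mathcal J}}(A,\tau C)$, where $\eta_C$ is your $\theta_C$.

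Taking $A=\tau C$ and $g=\mathrm{id}_{\tau C}$ gives precisely your trace identity $\theta_{\tau C}(\phi(\varepsilon))=\theta_C(\varepsilon)$, so Step 3 follows in one line from the definition. The paper does not even need your Step 2. Binaturality of $\eta$ gives $\eta_{\tau C}(\tau(\underline f)_*\phi(\delta))=\eta_A(\underline f^*\phi(\delta))$, and the defining identity with $g=f$ turns this into $\eta_C([f]_*\delta)=\eta_C(\overline f_*\delta)$.
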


If $\tau$ is an equivalence, we show that under certain conditions it induces a triangulated auto-equivalence on a triangulated category. For example in the category of finitely generated modules over an artin algebra, if the $\tau$-orbits of indecomposable projective modules contain all indecomposable injective modules, then the ideal quotient by the additive subcategory generated by these $\tau$-orbits is a triangulated category and $\tau$ induces a triangulated auto-equivalence on it (Corollary \ref{artin alg case}). This is analogous to self-injective algebras.

This paper is organized as follows. In Section 2 we recall some related notions. In Section 3, we investigate fundamental properties of right ARS dualities and prove our first main theorem. Section 4 is devoted to prove our second main theorem. In Section 5 we give some applications and examples. For simplicity of the results, we put settings at the beginning of each section.

\section{Preliminaries}

Throughout this article, let $R$ be an arbitrary commutative unital ring unless otherwise specified. All extriangulated categories are assumed to be $R$-linear in the sense of Definition \ref{def of extri}.

\subsection{Extriangulated categories}

We briefly recall the definition of $R$-linear extriangulated categories. For details, we refer the reader to \cite{NP}. Let $\mathcal{C}$ be an $R$-linear additive category and $\mathbb{E}:\mathcal{C}^{\rm op}\times \mathcal{C}\rightarrow {\rm Mod}R$ be an $R$-bilinear functor. For $\delta \in \mathbb{E}(C,A)$ and $c:C'\rightarrow C,\, a:A\rightarrow A'$, we denote $\mathbb{E}(c,A)(\delta)$ and $\mathbb{E}(C,a)(\delta)$ by $c^{*}\delta$ and $a_{*}\delta$ respectively. By Yoneda's Lemma, $\delta \in \mathbb{E}(C,A)$ induces two natural transformations $\delta^{\sharp}:\mathcal{C}(A,-)\Rightarrow \mathbb{E}(C,-)$ and $\delta_{\sharp}:\mathcal{C}(-,C)\Rightarrow \mathbb{E}(-,A)$.

\begin{defn}\cite[Definition 2.12]{NP}\label{def of extri}
	Let $R$ be a commutative unital ring and $\mathcal{C}$ be an $R$-linear additive category. A triplet $(\mathcal{C},\mathbb{E},\mathfrak{s})$ is called an {\em (R-linear) extriangulated category} if it satisfies the following conditions.
	\begin{enumerate}
		\item [(ET1)] $\mathbb{E}:\mathcal{C}^{\rm op}\times \mathcal{C}\rightarrow {\rm Mod}R$ is an $R$-bilinear functor.
		\item [(ET2)] $\mathfrak{s}$ is an additive realization of $\mathbb{E}$.
		\item [(ET3)] Let $\delta \in \mathbb{E}(C,A)$ and $\delta' \in \mathbb{E}(C',A')$ be $\mathbb{E}$-extensions, realized as
		\[\mathfrak{s}(\delta)=[A\xrightarrow{x} B\xrightarrow{y} C] \text{ and } \mathfrak{s}(\delta')=[A'\xrightarrow{x'} B'\xrightarrow{y'} C'].\]
		For any commutative square
		\[\begin{tikzcd}
			A \arrow[d,"a",swap] \arrow[r,"x"] & B \arrow[d,"b"] \arrow[r,"y"] & C \\
			A' \arrow[r,"x'"] & B' \arrow[r,"y'"] & C'
		\end{tikzcd}\]
		in $\mathcal{C}$, there exists a morphism $(a,c):\delta \rightarrow \delta'$ such that $cy=y'b$.
		\item [${\rm (ET3)^{op}}$] Dual of (ET3).
		\item [(ET4)] Let $\delta \in \mathbb{E}(D,A)$ and $\delta' \in \mathbb{E}(F,B)$ be $\mathbb{E}$-extensions, realized by
		\[A\xrightarrow{f} B\xrightarrow{f'} D\text{ and }B\xrightarrow{g} C\xrightarrow{g'} F\]
		respectively. Then there exist an object $E\in \mathcal{C}$, a commutative diagram
		\[\begin{tikzcd}
			A \arrow[r,"f"] \arrow[d,equal] & B \arrow[r,"f'"] \arrow[d,"g"] & D \arrow[d,"d"] \\
			A \arrow[r,"h"] & C \arrow[r,"h'"] \arrow[d,"g'"] & E \arrow[d,"e"] \\
			& F \arrow[r,equal] & F
		\end{tikzcd}\]
		in $\mathcal{C}$, and an $\mathbb{E}$-extension $\delta''\in \mathbb{E}(E,A)$ realized by $A\xrightarrow{h} C\xrightarrow{h'} E$, which satisfy the following compatibilities.
		\begin{enumerate}
			\item [(i)] $D\xrightarrow{d} E\xrightarrow{e} F$ realizes $f'_{*}\delta'$.
			\item [(ii)] $d^{*}\delta''=\delta$.
			\item [(iii)] $f_{*}\delta''=e^{*}\delta'$.
		\end{enumerate}
		\item [${\rm (ET4)^{op}}$] Dual of (ET4).
	\end{enumerate}
\end{defn}

In the rest of this section, let $(\mathcal{C},\mathbb{E},\mathfrak{s})$ be an $R$-linear extriangulated category. For $\delta \in \mathbb{E}(C,A)$, if $\mathfrak{s}(\delta)=[A\xrightarrow{a} B\xrightarrow{b} C]$, then we call $A\xrightarrow{a} B\xrightarrow{b} C\stackrel{\delta}\dashrightarrow$ an $\mathfrak{s}$-triangle (to emphasize $\mathfrak{s}$) or simply exangle.

\begin{lem}\cite[Corollary 3.12]{NP}\label{exact seq}
	For any $\mathfrak{s}$-triangle $A\xrightarrow{x} B\xrightarrow{y} C\stackrel{\delta}\dashrightarrow $, the following sequences of natural transformations are exact.
	\[\mathcal{C}(C,-)\xrightarrow{\mathcal{C}(y,-)} \mathcal{C}(B,-)\xrightarrow{\mathcal{C}(x,-)} \mathcal{C}(A,-)\xrightarrow{\delta^{\sharp}} \mathbb{E}(C,-)\xrightarrow{y^{*}} \mathbb{E}(B,-)\xrightarrow{x^{*}} \mathbb{E}(A,-)\]
	\[\mathcal{C}(-,A)\xrightarrow{\mathcal{C}(-,x)} \mathcal{C}(-,B)\xrightarrow{\mathcal{C}(-,y)} \mathcal{C}(-,C)\xrightarrow{\delta_{\sharp}} \mathbb{E}(-,A)\xrightarrow{x_{*}} \mathbb{E}(-,B)\xrightarrow{y_{*}} \mathbb{E}(-,C)\]
\end{lem}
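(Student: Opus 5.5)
The plan is to prove the second sequence (the covariant one in the second variable) directly from the axioms, and obtain the first by duality. The dual of an extriangulated category, with $\mathbb{E}^{\rm op}(A,C)=\mathbb{E}(C,A)$ and conflations reversed, is again extriangulated; under this duality (ET3) and (ET4) are exchanged with their duals. I will work objectwise: fix $X\in\mathcal{C}$ and show exactness of
\[\mathcal{C}(X,A)\xrightarrow{x\circ-}\mathcal{C}(X,B)\xrightarrow{y\circ-}\mathcal{C}(X,C)\xrightarrow{\delta_\sharp}\mathbb{E}(X,A)\xrightarrow{x_*}\mathbb{E}(X,B)\xrightarrow{y_*}\mathbb{E}(X,C).\]
Naturality in $X$ is automatic from the Yoneda description and bilinearity of $\mathbb{E}$. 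Here $\delta_\sharp(c)=c^*\delta$.

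First I establish the basic facts. I need $yx=0$, $y_*\delta=0$ and $x_*\delta=0$. These follow from realizations of split extensions: $\mathfrak{s}(0)$ is the split sequence, and additivity of $\mathfrak{s}$ together with (ET3)/(ET3)$^{\rm op}$ applied to suitable squares with $0$ or identity maps gives morphisms of extensions forcing these compositions to vanish. For example, a morphism of extensions $(x,\mathrm{id}_C)\colon\delta\to$ the realization of $x_*\delta$ is split, because $x$ factors through the middle term. Composites of consecutive maps are then zero: $y\circ x=0$, $(yc)^*\delta=c^*y^*\delta$, and $x_*c^*\delta=c^*x_*\delta=0$, $y_*x_*=(yx)_*=0$.

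Exactness at $\mathcal{C}(X,B)$: given $b\colon X\to B$ with $yb=0$, use (ET3)$^{\rm op}$ on the square from the split exangle $X\to X\oplus 0\to 0$ (or $X\xrightarrow{\rm id}X\to0$) to $\delta$. This produces $a$ with $xa=b$. Exactness at $\mathcal{C}(X,C)$: if $c^*\delta=0$, then $\mathfrak{s}(c^*\delta)$ is split. By the standard fact that the realization of $c^*\delta$ comes with a morphism of exangles to $\delta$ with last component $c$, which follows from (ET4)$^{\rm op}$ or directly from (ET3)$^{\rm op}$, a section of the split realization composed with the middle map gives a lift $b$ with $yb=c$.

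Exactness at the two $\mathbb{E}$ terms is the main obstacle, since it requires (ET4). At $\mathbb{E}(X,A)$: suppose $\theta\in\mathbb{E}(X,A)$ with $x_*\theta=0$. Realize $\theta$ as $A\xrightarrow{f}E\xrightarrow{g}X$ and apply (ET4)$^{\rm op}$ to the pair $\theta,\delta$ sharing $A$. This yields an object $M$ and extensions relating $B$, $E$, $M$. The compatibility $x_*\theta = (\text{map})^*(\ldots)$, together with vanishing, gives a splitting, from which one extracts $c\colon X\to C$ with $c^*\delta=\theta$. At $\mathbb{E}(X,B)$, for $\theta'$ with $y_*\theta'=0$, I would use (ET4)$^{\rm op}$ in the same way to write $\theta'=x_*\theta''$. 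The compatibilities (i)–(iii) of (ET4) are exactly tuned for these pullback/pushout identities, so the remaining work is bookkeeping. Rather than rederive everything, I would cite the intermediate results of \cite{NP} (their Proposition 3.3, Corollary 3.5 and Proposition 3.11), which give precisely these exactness statements. The first sequence then follows by applying the second to the opposite extriangulated category.
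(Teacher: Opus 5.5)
The paper gives no proof of its own here; it simply cites \cite[Corollary 3.12]{NP}. Your closing appeal to \cite{NP} therefore rests on the same justification. Your arguments for the vanishing composites and for exactness at $\mathcal{C}(X,B)$ and $\mathcal{C}(X,C)$ are fine, up to three small slips:
\begin{itemize}
\item $y_*\delta$ is not defined; you mean $y^*\delta$, as you use later.
\item $x_*\delta=0$ is most directly obtained from (ET3) applied to the square $(x,1_B)$ into the split $\mathfrak{s}$-triangle $B\xrightarrow{1}B\to 0$. This gives a morphism of extensions $(x,0)\colon\delta\to 0$, i.e.\ $x_*\delta=0$.
\item The middle map over $(1_A,c)\colon c^*\delta\to\delta$ comes from $\mathfrak{s}$ being a realization (ET2). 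It does not come from ${\rm (ET3)^{op}}$, which presupposes the middle map, nor from ${\rm (ET4)^{op}}$.
\end{itemize}

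The part you sketch yourself at the two $\mathbb{E}$-terms does not work as written. ${\rm (ET4)^{op}}$ concerns two $\mathfrak{s}$-triangles in which the deflation of one ends at the middle term of the other; it composes deflations. It cannot be applied to $\theta$ and $\delta$ merely because they share the first term $A$, so no object $M$ is produced.

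In fact, exactness at $\mathbb{E}(X,A)$ needs no (ET4) at all. Realize $\theta$ by $A\xrightarrow{f}E\xrightarrow{g}X$ and suppose $x_*\theta=\theta^{\sharp}(x)=0$. The dual of your $\mathcal{C}(X,C)$-argument, applied to the $\mathfrak{s}$-triangle of $\theta$, gives $b\colon E\to B$ with $bf=x$. Then (ET3) applied to the square $(1_A,b)$ gives $c\colon X\to C$ such that $(1_A,c)\colon\theta\to\delta$ is a morphism of extensions, i.e.\ $\theta=c^{*}\delta$.

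The step that genuinely depends on the fourth axiom is exactness at $\mathbb{E}(X,B)$, and the relevant axiom there is (ET4), not ${\rm (ET4)^{op}}$. Realize $\theta'$ by $B\xrightarrow{u}U\xrightarrow{v}X$ and apply (ET4) to the inflations $x$ and $u$. This yields $\delta''\in\mathbb{E}(M,A)$ realized by $A\xrightarrow{ux}U\to M$, together with an $\mathfrak{s}$-triangle $C\to M\xrightarrow{m}X$ realizing $y_*\theta'$. By compatibility (iii), $x_*\delta''=m^*\theta'$. If $y_*\theta'=0$, this $\mathfrak{s}$-triangle splits, so $m$ has a section $s$. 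Hence $\theta'=s^*m^*\theta'=s^*x_*\delta''=x_*(s^*\delta'')$.

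With these two repairs your plan (prove one sequence objectwise, then dualize) becomes a self-contained proof. As written, the self-contained part fails exactly at the steps you identify as the main obstacle, and only the citation carries them.
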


\begin{defn}
	An object $P\in \mathcal{C}$ is called {\em projective} if $\mathbb{E}(P,-)=0$. We say $(\mathcal{C},\mathbb{E},\mathfrak{s})$ {\em has enough projectives} if for any object $C\in \mathcal{C}$, there exists an $\mathfrak{s}$-triangle $A\rightarrow P\rightarrow C\dashrightarrow $ such that $P$ is projective. Dually, we define {\em injective} object.
\end{defn}

\begin{lem}\cite[Proposition 3.30]{NP}\label{induced extri str on ideal quotient}
	Let $\mathcal{D}\subseteq \mathcal{C}$ be a full additive subcategory in which all objects are both projective and injective. Denote by $\widetilde{\mathcal{C}}$ the ideal quotient $\mathcal{C}/[\mathcal{D}]$ and by $\widetilde{\mathbb{E}}:\widetilde{\mathcal{C}}^{\rm op}\times \widetilde{\mathcal{C}}\rightarrow {\rm Mod}R$ the $R$-bilinear functor induced by $\mathbb{E}$. Define $\widetilde{\mathfrak{s}}(\delta):=\widetilde{\mathfrak{s}(\delta)}=[A\xrightarrow{\widetilde{a}} B\xrightarrow{\widetilde{b}} C]$ for any $\delta \in \widetilde{\mathbb{E}}(C,A)$ and $\mathfrak{s}(\delta)=[A\xrightarrow{a} B\xrightarrow{b} C]$. Then $(\widetilde{\mathcal{C}},\widetilde{\mathbb{E}},\widetilde{\mathfrak{s}})$ is an extriangulated category.
\end{lem}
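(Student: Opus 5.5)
The plan is to check the axioms (ET1)--(ET4)$^{\rm op}$ of Definition \ref{def of extri} for $(\widetilde{\mathcal{C}},\widetilde{\mathbb{E}},\widetilde{\mathfrak{s}})$ one at a time. The first task is to show that $\widetilde{\mathbb{E}}$ is well defined. Since every object of $\mathcal{D}$ is projective and injective, $\mathbb{E}(D,-)=0=\mathbb{E}(-,D)$ for $D\in\mathcal{D}$. Hence any morphism $f=vu$ factoring through $D\in\mathcal{D}$ satisfies
\[f^{*}=u^{*}v^{*}=0 \quad\text{and}\quad f_{*}=v_{*}u_{*}=0 .\]
So $\mathbb{E}$ kills the ideal $[\mathcal{D}]$ in both variables and descends to an $R$-bilinear functor on $\widetilde{\mathcal{C}}^{\rm op}\times\widetilde{\mathcal{C}}$, with $\widetilde{\mathbb{E}}(C,A)=\mathbb{E}(C,A)$. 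This gives (ET1).

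Next I show that $\widetilde{\mathfrak{s}}$ is well defined, i.e. that the image of an equivalence class of $\mathcal{C}$ is a single equivalence class in $\widetilde{\mathcal{C}}$. Isomorphisms of sequences in $\mathcal{C}$ that are identity on the ends clearly map to such isomorphisms in $\widetilde{\mathcal{C}}$. So the content is only that $\widetilde{\mathfrak{s}}$ is a realization. Take $(\widetilde a,\widetilde c):\delta\to\delta'$, i.e. $a_*\delta=c^*\delta'$; this equation holds in $\mathcal{C}$ for any lifts $a,c$, because $\widetilde{\mathbb{E}}=\mathbb{E}$ on objects and morphisms act through classes. Now (ET2) in $\mathcal{C}$ gives $b$ making the diagram commute in $\mathcal{C}$, hence in $\widetilde{\mathcal{C}}$. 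Additivity and the condition that $\widetilde{\mathfrak{s}}(0)$ is split are inherited directly, since the image of a split sequence is split.

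The main obstacle is (ET3) and (ET4) (and their duals), where a square only commutes modulo $[\mathcal{D}]$. For (ET3), suppose $x'a-bx=vu$ with $u:A\to D$ and $v:D\to B'$, $D\in\mathcal{D}$. The plan is to absorb the error into $b$. Since $D$ is injective, $\mathbb{E}(C,D)=0$, so Lemma \ref{exact seq} (the first sequence, evaluated at $D$) makes $\mathcal{C}(x,D)$ surjective. Hence $u=u'x$ for some $u':B\to D$. Replacing $b$ by $b+vu'$ gives $x'a=(b+vu')x$ in $\mathcal{C}$ with the same class $\widetilde b$. Then (ET3) in $\mathcal{C}$ yields $c$ with $a_*\delta=c^*\delta'$ and $cy=y'(b+vu')$, and passing to $\widetilde{\mathcal{C}}$ gives $\widetilde c\,\widetilde y=\widetilde{y'}\,\widetilde b$. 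The dual (ET3)$^{\rm op}$ uses projectivity of $D$ in the same way.

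For (ET4), the hypotheses consist only of extensions together with their realizations. So I choose representatives in $\mathcal{C}$, namely $\mathfrak{s}(\delta)$ and $\mathfrak{s}(\delta')$, whose conflations share the middle object $B$; the realizations in $\widetilde{\mathcal{C}}$ are then the images of these. Applying (ET4) in $\mathcal{C}$ and taking images gives the diagram and all compatibilities (i)--(iii), since the relations $d^{*}\delta''=\delta$ and $f_*\delta''=e^*\delta'$ hold in $\mathcal{C}$ and therefore in $\widetilde{\mathbb{E}}$. One subtlety needs care here: the given realizations in $\widetilde{\mathcal{C}}$ are only equivalent (via isomorphisms in $\widetilde{\mathcal{C}}$) to images of $\mathfrak{s}(\delta)$ and $\mathfrak{s}(\delta')$. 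Transporting the composable pair along such isomorphisms, which may not lift to isomorphisms in $\mathcal{C}$, is handled by conjugating the output diagram by these isomorphisms in $\widetilde{\mathcal{C}}$. (ET4)$^{\rm op}$ is dual.
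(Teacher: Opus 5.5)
Your checks of (ET1), (ET2), (ET3) and their duals are correct. They follow the standard argument behind \cite[Proposition 3.30]{NP}, which the paper only cites and does not prove. The key trick is absorbing the error term $vu$ into $b$ via injectivity of $D$. Reducing to images of $\mathfrak{s}$-triangles is harmless there, because those axioms are invariant under replacing realizations by equivalent ones. The gap is in (ET4).

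\textbf{Where (ET4) fails.} In (ET4) for $\widetilde{\mathcal{C}}$ the object $B$ is prescribed: it is the middle term of the given realization $A\xrightarrow{\tilde f}B\xrightarrow{\tilde f'}D$ of $\delta$, and the first term of $\delta'\in\widetilde{\mathbb{E}}(F,B)$. But $\mathfrak{s}(\delta)=[A\xrightarrow{f_0}B_0\xrightarrow{f'_0}D]$ has $B_0\cong B$ only in $\widetilde{\mathcal{C}}$. For example, take $\delta=0$ and $B=A\oplus D\oplus P$ with $0\neq P\in\mathcal{D}$ in a Krull--Schmidt $\mathcal{C}$. So you cannot choose representatives ``sharing the middle object $B$''. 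Then $\mathfrak{s}(\delta)$ and $\mathfrak{s}(\delta')$ are not composable in $\mathcal{C}$, (ET4) of $\mathcal{C}$ does not apply to them, and there is no output diagram to conjugate.

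\textbf{The repair.} You must change the second extension. Let $\tilde\theta\colon B_0\to B$ be the given isomorphism and $\theta'$ a lift of $\tilde\theta^{-1}$. Apply (ET4) in $\mathcal{C}$ to $\mathfrak{s}(\delta)$ and $\mathfrak{s}(\theta'_*\delta')=[B_0\xrightarrow{g_0}C_0\xrightarrow{g'_0}F]$. Conditions (i)--(iii) transfer because $\theta_*\theta'_*\delta'=\delta'$ (as $\mathbb{E}$ kills $[\mathcal{D}]$), $\tilde f'_0\tilde\theta^{-1}=\tilde f'$ and $\tilde\theta\tilde f_0=\tilde f$.

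\textbf{The missing step.} To return to the prescribed realization $B\xrightarrow{\tilde g}C\xrightarrow{\tilde g'}F$, you need an isomorphism $\tilde\psi\colon C_0\to C$ in $\widetilde{\mathcal{C}}$ with $\tilde\psi\tilde g_0=\tilde g\tilde\theta$ and $\tilde g'\tilde\psi=\tilde g'_0$. This is not among the isomorphisms you already have, and producing it is the real content.
\begin{itemize}
\item (ET2) for $\widetilde{\mathfrak{s}}$, applied to $(\tilde\theta,1_F)$ and $(\tilde\theta^{-1},1_F)$, yields $\tilde\psi$ and $\tilde\psi'$.
\item Invertibility follows from the two-out-of-three argument (cf.\ \cite[Corollary 3.6]{NP}) run in $\widetilde{\mathcal{C}}$. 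Set $\mu=\tilde\psi'\tilde\psi-1$; it satisfies $\mu\tilde g_0=0=\tilde g'_0\mu$.
\item By your (ET3) for $\widetilde{\mathfrak{s}}$, this gives $\mu=\nu\tilde g'_0$, hence $\mu^2=0$. The same argument applies to $\tilde\psi\tilde\psi'$, so $\tilde\psi$ is an isomorphism.
\end{itemize}
Only then does conjugation by $\tilde\psi$, i.e.\ replacing $(\tilde h,\tilde h')$ by $(\tilde\psi\tilde h,\tilde h'\tilde\psi^{-1})$, give the required (ET4) diagram with $\tilde\psi\tilde h=\tilde g\tilde f$. (ET4)$^{\rm op}$ is dual.
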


\begin{lem}\cite[Proposition 1.20]{LN}\label{weak pushout}
	Let $A\xrightarrow{x} B\xrightarrow{y} C\stackrel{\delta}\dashrightarrow$ be an $\mathfrak{s}$-triangle and $f:A\rightarrow D$ be any morphism. Suppose $f_{*}\delta$ is realized by $D\xrightarrow{d} E\xrightarrow{e} C\stackrel{f_{*}\delta}\dashrightarrow $. Then there exists a morphism $g:B\rightarrow E$ which gives a morphism of $\mathfrak{s}$-triangles
	\[\begin{tikzcd}
		A \arrow[r,"x"] \arrow[d,"f"] & B \arrow[r,"y"] \arrow[d,"g",dashed] & C \arrow[r,"\delta",dashed] \arrow[d,equal] & {} \\
		D \arrow[r,"d"] & E \arrow[r,"e"] & C \arrow[r,"f_{*}\delta",dashed] & {}
	\end{tikzcd}\]
	and an $\mathfrak{s}$-triangle $A\xrightarrow{\begin{pmatrix}
			\begin{smallmatrix}
				-f\\
				x
			\end{smallmatrix}
	\end{pmatrix}} D\oplus B\xrightarrow{\begin{pmatrix}
	\begin{smallmatrix}
		d & g 
	\end{smallmatrix}
\end{pmatrix}} E\stackrel{e^{*}\delta}\dashrightarrow $.
\end{lem}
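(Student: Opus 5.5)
The statement just above is \cite[Proposition 1.20]{LN}, which the paper cites without proof. I would prove it directly from the axioms in two stages. First I produce the morphism of $\mathfrak{s}$-triangles. Then I realize $e^{*}\delta$ by the mapping-cone-type sequence.

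\emph{Stage 1: the morphism $g$.} The pair $(f,1_C)$ is a morphism of $\mathbb{E}$-extensions $\delta\to f_{*}\delta$, since $f_{*}\delta = f_{*}\delta = 1_C^{*}f_{*}\delta$. I would first show that any morphism of extensions is realized by some middle map $g$; this is the standard consequence of (ET2) in \cite{NP}. Concretely, the realization is additive, and a morphism of extensions $(a,c)$ lifts to a morphism of the realizing sequences; this is part of the definition of a realization. This gives $g\colon B\to E$ with $gx=df$ and $eg=y$.

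\emph{Stage 2: the exangle on $D\oplus B$.} The idea is to apply ${\rm (ET4)^{op}}$ to a pair of conflations sharing a common object, and then identify the result. I would take the split exangle $D\xrightarrow{\binom{1}{0}} D\oplus B\xrightarrow{(0\ 1)} B\overset{0}{\dashrightarrow}$ together with $A\xrightarrow{x}B\xrightarrow{y}C\overset{\delta}{\dashrightarrow}$. Composing the deflations gives $D\oplus B\xrightarrow{(0\ y)} C$. A cleaner route is to apply ${\rm (ET4)^{op}}$ to the deflations $D\oplus B\to B$ and $y$ after first twisting $D\oplus B$ by the automorphism $\begin{pmatrix}1&-f'\\0&1\end{pmatrix}$. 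Even so, I find it more direct to work with $E$:
\begin{enumerate}
\item Apply ${\rm (ET4)^{op}}$ to $D\xrightarrow{d}E\xrightarrow{e}C\overset{f_*\delta}{\dashrightarrow}$ and $A\xrightarrow{x}B\xrightarrow{y}C\overset{\delta}{\dashrightarrow}$, which share the end term $C$. This produces an object $M$ with exangles $A\to M\to E\overset{e^{*}\delta}{\dashrightarrow}$ and $D\to M\to B\overset{y^{*}f_*\delta}{\dashrightarrow}$.
\item Since $y^{*}f_{*}\delta=f_{*}y^{*}\delta=0$ by Lemma \ref{exact seq}, the second exangle splits, so $M\cong D\oplus B$.
\item Chase the compatibilities of ${\rm (ET4)^{op}}$ to identify the maps as $\binom{-f}{x}$ and $(d\ g')$, where $g'$ satisfies $g'x=df$ and $eg'=y$. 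Then replace $g$ by this $g'$; Stage 1 only required existence.
\end{enumerate}

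\emph{Main obstacle.} The delicate part is the matrix identification in the third step of Stage 2: fixing the splitting of $M\cong D\oplus B$ so that the inflation becomes exactly $\binom{-f}{x}$, with the sign chosen correctly. I would first try choosing the splitting section $B\to M$ from the exangle $D\to M\to B$ adapted to the map $M\to E$. Then I would adjust by an automorphism of $D\oplus B$ of the form $\begin{pmatrix}1&h\\0&1\end{pmatrix}$, using that realizations are only defined up to isomorphism of the middle term fixing the ends.
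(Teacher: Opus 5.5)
Your outline (form a common ``pullback'' $M$ of the two $\mathfrak{s}$-triangles over $C$, note that $D\to M\to B$ realizes $y^{*}f_{*}\delta=f_{*}y^{*}\delta=0$ and so splits, then read off matrices) is the right one. Stage~1 and the final replacement of $g$ by $g'$ are fine. Stage~2, however, has a genuine gap in two places.

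First, ${\rm (ET4)^{op}}$ does not apply to $D\xrightarrow{d}E\xrightarrow{e}C$ and $A\xrightarrow{x}B\xrightarrow{y}C$. The axiom takes two \emph{composable} deflations, where the target of the first is the middle term of the second. It does not take two deflations with a common target. What you need is an object $M$ with $\mathfrak{s}$-triangles $D\xrightarrow{m_1}M\xrightarrow{e_1}B$ realizing $y^{*}f_{*}\delta$ and $A\xrightarrow{m_2}M\xrightarrow{e_2}E$ realizing $e^{*}\delta$, with $e_2m_1=d$, $e_1m_2=x$ and $ee_2=ye_1$. That is the content of \cite[Proposition 3.15]{NP}, which is a derived statement with its own proof from the axioms. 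You have to cite it or reprove it.

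Second, the identification of the inflation as $\binom{-f}{x}$ does not follow from the relations you list; this is exactly the step you flag as the main obstacle. After normalizing $M=D\oplus B$, $m_1=\binom{1}{0}$, $e_1=(0\ 1)$, you get $m_2=\binom{h}{x}$ and $e_2=(d\ g)$ with $eg=y$ and $dh+gx=0$. Changing the splitting by $\left(\begin{smallmatrix}1&k\\0&1\end{smallmatrix}\right)$ replaces $h$ by $h+kx$, and $(kx)_{*}\delta=0$. So you can reach $-f$ only if $(h+f)_{*}\delta=0$. Since $d$ need not be monic, $dh=-gx$ does not force this. For example, take $D=A$, $f={\rm id}_A$, $E=B$, $g={\rm id}_B$, and $h=-{\rm id}_A+n$ with $xn=0$ and $n$ nilpotent. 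This satisfies everything above, while $(h+f)_{*}\delta=n_{*}\delta$ can be nonzero in a triangulated category.

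The missing ingredient is the additional compatibility in \cite[Proposition 3.15]{NP}, namely $(m_1)_{*}(f_{*}\delta)+(m_2)_{*}\delta=0$. Its $D$-component is $(f+h)_{*}\delta=0$, so by Lemma~\ref{exact seq} there is $k\colon B\to D$ with $f+h=kx$. Transporting along the automorphism $\left(\begin{smallmatrix}1&-k\\0&1\end{smallmatrix}\right)$ of $D\oplus B$ gives the $\mathfrak{s}$-triangle $A\xrightarrow{\binom{-f}{x}}D\oplus B\xrightarrow{(d\ \ g+dk)}E\stackrel{e^{*}\delta}{\dashrightarrow}$. The map $g':=g+dk$ satisfies $g'x=gx+d(h+f)=df$ and $eg'=y$. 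This same relation is also where the minus sign in $-f$ comes from.
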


\subsection{Exact functors}

\begin{defn}
	Let $(\mathcal{C},\mathbb{E},\mathfrak{s}),(\mathcal{C}',\mathbb{E}',\mathfrak{s}')$ be $R$-linear extriangulated categories.
	\begin{enumerate}
		\item \cite[Definition 2.32]{BS} An {\em exact functor} $(F,\phi):(\mathcal{C},\mathbb{E},\mathfrak{s})\rightarrow (\mathcal{C}',\mathbb{E}',\mathfrak{s}')$ is a pair of an $R$-linear functor $F:\mathcal{C}\rightarrow \mathcal{C}'$ and a natural transformation $\phi:\mathbb{E}\Rightarrow \mathbb{E}'\circ (F^{\rm op}\times F)$ which satisfies
		\[\mathfrak{s}'(\phi_{C,A}(\delta))=[F(A)\xrightarrow{F(x)} F(B)\xrightarrow{F(y)} F(C)]\]
		for any $\mathfrak{s}$-triangle $A\xrightarrow{x} B\xrightarrow{y} C\stackrel{\delta} \dashrightarrow $ in $\mathcal{C}$.
		\item \cite[Definition 2.11]{NOS} Let $(F,\phi),(G,\psi):(\mathcal{C},\mathbb{E},\mathfrak{s})\rightarrow (\mathcal{C}',\mathbb{E}',\mathfrak{s}')$ be exact functors. A {\em natural transformation $\eta:(F,\phi)\Rightarrow (G,\psi)$ of exact functors} is a natural transformation $\eta:F\Rightarrow G$ of $R$-linear functors which satisfies
		\[(\eta_{A})_{*}\phi_{C,A}(\delta)=(\eta_{C})^{*}\psi_{C,A}(\delta)\]
		for any $\delta \in \mathbb{E}(C,A)$.
	\end{enumerate}
\end{defn}

\begin{rem}\label{rem on exact functors}
	(1) Let $(F,\phi):(\mathcal{C},\mathbb{E},\mathfrak{s})\rightarrow (\mathcal{C}',\mathbb{E}',\mathfrak{s}')$ be an exact functor. By \cite[Proposition 2.13]{NOS}, $F$ is an equivalence and $\phi$ is a natural isomorphism if and only if there exists an exact functor $(G,\psi):(\mathcal{C}',\mathbb{E}',\mathfrak{s}')\rightarrow (\mathcal{C},\mathbb{E},\mathfrak{s})$ such that $(G,\psi)\circ (F,\phi)\cong ({\rm id}_{\mathcal{C}},{\rm id}_{\mathbb{E}})$ and $(F,\phi)\circ (G,\psi)\cong ({\rm id}_{\mathcal{C}'},{\rm id}_{\mathbb{E}'})$ as exact functors. In this case, we call $(F,\phi)$ an {\em equivalence of extriangulated categories} or simply {\em exact equivalence}.
	
	(2) All the above notions coincide with the usual ones if all extriangulated categories are exact categories or triangulated categories (see \cite[Theorem 2.33, 2.34]{BS}).
\end{rem}

\subsection{Relative theories}

Let $\mathbb{F}$ be an $R$-bilinear subfunctor of $\mathbb{E}$. Denote by $\mathfrak{s}|_{\mathbb{F}}$ the restriction of $\mathfrak{s}$ to $\mathbb{F}$. Then $(\mathcal{C},\mathbb{F},\mathfrak{s}|_{\mathbb{F}})$ satisfies (ET1), (ET2), (ET3) and ${\rm (ET3)^{op}}$. 

\begin{lem}\cite[Proposition 3.16]{HLN}
	The following are equivalent
	\begin{enumerate}
		\item $(\mathcal{C},\mathbb{F},\mathfrak{s}|_{\mathbb{F}})$ is extriangulated.
		\item $\mathfrak{s}|_{\mathbb{F}}$-inflations are closed under compositions.
		\item $\mathfrak{s}|_{\mathbb{F}}$-deflations are closed under compositions.
	\end{enumerate}
\end{lem}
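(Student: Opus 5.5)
The plan is to prove (1)$\Rightarrow$(2), (1)$\Rightarrow$(3), then (2)$\Rightarrow$(1), and obtain (3)$\Rightarrow$(1) by duality.

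For (1)$\Rightarrow$(2), we use that in any extriangulated category inflations are closed under composition, and apply this to $(\mathcal{C},\mathbb{F},\mathfrak{s}|_{\mathbb{F}})$. Concretely, given $\mathfrak{s}|_{\mathbb{F}}$-inflations $f\colon A\to B$ and $g\colon B\to C$, (ET4) for $\mathbb{F}$ yields an $\mathbb{F}$-extension $\delta''$ realized by $A\xrightarrow{h}C\to E$ with $h=gf$. The implication (1)$\Rightarrow$(3) is the dual argument using ${\rm (ET4)^{op}}$.

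For (2)$\Rightarrow$(1), axioms (ET1), (ET2), (ET3) and ${\rm (ET3)^{op}}$ already hold, as recalled before the lemma, so only (ET4) and ${\rm (ET4)^{op}}$ remain.

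For (ET4), take $\delta\in\mathbb{F}(D,A)$ and $\delta'\in\mathbb{F}(F,B)$ realized by $A\xrightarrow{f}B\xrightarrow{f'}D$ and $B\xrightarrow{g}C\xrightarrow{g'}F$. Apply (ET4) in $(\mathcal{C},\mathbb{E},\mathfrak{s})$ to get $E$, the diagram, and $\delta''\in\mathbb{E}(E,A)$ realized by $A\xrightarrow{gf}C\xrightarrow{h'}E$. Compatibilities (i)–(iii) are equalities in $\mathbb{E}$, and $f'_*\delta'\in\mathbb{F}$ because $\mathbb{F}$ is a subfunctor. It therefore suffices to show $\delta''\in\mathbb{F}(E,A)$.

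By (2), $gf$ is an $\mathfrak{s}|_{\mathbb{F}}$-inflation, so there is $\theta\in\mathbb{F}(E',A)$ with $\mathfrak{s}(\theta)=[A\xrightarrow{gf}C\to E']$. Applying (ET3) to the identity square on $A\xrightarrow{gf}C$ gives a morphism of extensions $({\rm id}_A,c)\colon\delta''\to\theta$, with $c\colon E\to E'$. Since the first two components are isomorphisms, $c$ is an isomorphism by the standard consequence of Lemma \ref{exact seq} (five-lemma type argument, \cite[Corollary 3.6]{NP}). Hence $\delta''=c^*\theta$, which lies in $\mathbb{F}$ because $\mathbb{F}$ is a subfunctor.

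The main obstacle is ${\rm (ET4)^{op}}$ under (2) alone, since it concerns composites of deflations. Let $\delta\in\mathbb{F}(D,A)$ and $\delta'\in\mathbb{F}(B,F)$ be realized by $A\to B\xrightarrow{f'}D$ and $F\to C\xrightarrow{g'}B$. Applying ${\rm (ET4)^{op}}$ in $\mathbb{E}$ gives a conflation $E\xrightarrow{x}C\xrightarrow{f'g'}D$ realizing some $\delta''\in\mathbb{E}(D,E)$, together with a conflation $F\xrightarrow{d}E\xrightarrow{e}A$. That second conflation realizes $f'^*\delta'$, which lies in $\mathbb{F}$. The compatibilities include $d_*\delta''=\delta'$ and $x_*\delta''=g'^*\delta$ (the latter being in $\mathbb{F}$), and we must show $\delta''\in\mathbb{F}$.

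First attempt: show $x$ is a composite of $\mathfrak{s}|_{\mathbb{F}}$-inflations up to isomorphism. The idea is to use Lemma \ref{weak pushout} with the pushout along $d$ to express $x$ through the inflations $F\to C$ and $d$, then invoke (2) and the uniqueness argument above to identify $\delta''$ with a pullback of an $\mathbb{F}$-extension. If this fails, fall back to the route of \cite{HLN}: show that (2) implies (3) directly, via the $\mathfrak{s}$-triangle $A\to D\oplus B\to E$ of Lemma \ref{weak pushout}, and then conclude ${\rm (ET4)^{op}}$ dually to the (ET4) argument above.
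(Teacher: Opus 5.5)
The paper does not prove this lemma; it quotes it from \cite[Proposition 3.16]{HLN}, so your proposal has to stand on its own.

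Your implications (1)$\Rightarrow$(2), (1)$\Rightarrow$(3) and the (ET4) half of (2)$\Rightarrow$(1) are correct. In the (ET4) step you do not need $c$ to be an isomorphism: $({\rm id}_A,c)$ being a morphism of extensions already means $\delta''=c^{*}\theta$.

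The genuine gap is ${\rm (ET4)^{op}}$ under (2). This is the only non-formal part of the lemma, and you leave it as two plans that are never carried out. The first plan gives no mechanism: knowing that $d$ and $xd=i$ are $\mathfrak{s}|_{\mathbb{F}}$-inflations tells you nothing about $x$, and you never say which $\mathbb{F}$-inflations $x$ is supposed to be a composite of. Your compatibilities are also mis-typed. The triangle $F\xrightarrow{d}E\xrightarrow{e}A$ realizes $a^{*}\delta'$, where $a\colon A\to B$ is the inflation of $\delta$. The other two conditions of ${\rm (ET4)^{op}}$ are $e_{*}\delta''=\delta$ and $d_{*}\delta'=f'^{*}\delta''$. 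That last identity is exactly the input you are missing. Your fallback names the right lemma but omits which pushout to take and why its extension lies in $\mathbb{F}$.

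To close the gap, proceed as follows.
\begin{enumerate}
\item Apply Lemma \ref{weak pushout} to $\delta''$, realized by $E\xrightarrow{x}C\xrightarrow{f'g'}D$, and to $e\colon E\to A$. Since $e_{*}\delta''=\delta$ is realized by $A\xrightarrow{a}B\xrightarrow{f'}D$, you obtain an $\mathfrak{s}$-triangle $E\xrightarrow{u}A\oplus C\rightarrow B$ with $u=\left(\begin{smallmatrix}-e\\ x\end{smallmatrix}\right)$. It realizes $f'^{*}\delta''=d_{*}\delta'\in\mathbb{F}(B,E)$, so $u$ is an $\mathfrak{s}|_{\mathbb{F}}$-inflation built from the same $x$.
\item The map $a\oplus 1_C$ is an $\mathfrak{s}|_{\mathbb{F}}$-inflation, being the direct sum of $\delta$ and the split triangle $C\xrightarrow{1}C\rightarrow 0$. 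By (2), $(a\oplus 1_C)u=\left(\begin{smallmatrix}-ae\\ x\end{smallmatrix}\right)$ is an $\mathfrak{s}|_{\mathbb{F}}$-inflation.
\item Since $ae=g'x$, the automorphism $\left(\begin{smallmatrix}1&g'\\ 0&1\end{smallmatrix}\right)$ of $B\oplus C$ turns this map into $v=\left(\begin{smallmatrix}0\\ x\end{smallmatrix}\right)$. Hence $v$ is again an $\mathfrak{s}|_{\mathbb{F}}$-inflation.
\item The triangle $E\xrightarrow{v}B\oplus C\xrightarrow{1_B\oplus f'g'}B\oplus D$ realizes $0\oplus\delta''$. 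Your (ET3) argument then gives $0\oplus\delta''\in\mathbb{F}(B\oplus D,E)$, hence $\delta''\in\mathbb{F}(D,E)$.
\end{enumerate}
This proves ${\rm (ET4)^{op}}$ from (2), and in fact (3). The dual argument gives (3)$\Rightarrow$(ET4), which completes the proof.
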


If $\mathbb{F}$ satisfies the above equivalent conditions, we call it a {\em closed subfunctor} of $\mathbb{E}$. We also call the pair $(\mathbb{F},\mathfrak{s}|_{\mathbb{F}})$ a {\em relative structure} or {\em substructure} of the extriangulated structure $(\mathbb{E},\mathfrak{s})$ on $\mathcal{C}$, and call $(\mathcal{C},\mathbb{F},\mathfrak{s}|_{\mathbb{F}})$ a {\em relative theory} of $(\mathcal{C},\mathbb{E},\mathfrak{s})$.

\begin{exam}\cite[Definition 3.18, Proposition 3.19]{HLN}\label{typical exam of subfunctor}
	This is a typical example of constructing relative structures. Let $\mathcal{D}\subseteq \mathcal{C}$ be a full subcategory. Define
	\[\mathbb{E}_{\mathcal{D}}(C,A)=\{\delta \in \mathbb{E}(C,A)\,|\,(\delta_{\sharp})_{D}=0\text{ for any }D\in \mathcal{D}\},\]
	\[\mathbb{E}^{\mathcal{D}}(C,A)=\{\delta \in \mathbb{E}(C,A)\,|\,(\delta^{\sharp})_{D}=0\text{ for any }D\in \mathcal{D}\}.\]
	Then $\mathbb{E}_{\mathcal{D}}$ and $\mathbb{E}^{\mathcal{D}}$ are closed subfunctors of $\mathbb{E}$. Moreover, $\mathbb{E}_{\mathcal{D}}$ (resp. $\mathbb{E}^{\mathcal{D}}$) is the greatest closed subfunctor of $\mathbb{E}$ such that objects in $\mathcal{D}$ are projective (resp. injective) under this substructure.
\end{exam}

\subsection{Auslander-Reiten-Serre duality}

\begin{defn}\cite[Definition 1.21]{INP}
	Let $\mathcal{P}_{\mathbb{E}}$ (resp. $\mathcal{I}_{\mathbb{E}}$) denote the ideal of $\mathcal{C}$ consisting of all morphisms $f$ satisfying $\mathbb{E}(f,-)=0$ (resp. $\mathbb{E}(-,f)=0$). The {\em stable category} (resp. {\em costable category}) of $\mathcal{C}$ is defined to be the ideal quotient
	\[\underline{\mathcal{C}}:=\mathcal{C}/\mathcal{P}_{\mathbb{E}}\text{ (resp. $\overline{\mathcal{C}}:=\mathcal{C}/\mathcal{I}_{\mathbb{E}}$)}.\]
\end{defn}

Note that if $(\mathcal{C},\mathbb{E},\mathfrak{s})$ has enough projectives $\mathcal{Q}$ (resp. injectives $\mathcal{J}$), then $\mathcal{P}_{\mathbb{E}}$ (resp. $\mathcal{I}_{\mathbb{E}}$) coincides with the ideal $[\mathcal{Q}]$ (resp. $[\mathcal{J}]$) consisting of morphisms factoring through projective (resp. injective) objects. 

\begin{lem}\cite[Proposition 1.23]{INP}\label{(co)stable cat}
	\begin{enumerate}
		\item The functor $\mathbb{E}:\mathcal{C}^{\rm op}\times \mathcal{C}\rightarrow {\rm Mod}R$ induces a functor $\mathbb{E}:\underline{\mathcal{C}}^{\rm op}\times \overline{\mathcal{C}}\rightarrow {\rm Mod}R$.
		\item An object $B\in \mathcal{C}$ is injective if and only if $B\cong 0$ in $\overline{\mathcal{C}}$.
		\item An object $A\in \mathcal{C}$ is projective if and only if $A\cong 0$ in $\underline{\mathcal{C}}$.
	\end{enumerate}
\end{lem}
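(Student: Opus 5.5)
The statement to prove is Lemma \ref{(co)stable cat}. For (1), I would show that $\mathbb{E}$ vanishes on the relevant ideals, so that it factors through the quotients. Bilinearity of $\mathbb{E}$ then gives well-definedness on the quotient hom-sets.

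Let $f,f'\colon C'\to C$ with $f-f'\in\mathcal{P}_{\mathbb{E}}$. Then $\mathbb{E}(f,-)-\mathbb{E}(f',-)=\mathbb{E}(f-f',-)=0$. So $\mathbb{E}(-,A)$ descends to $\underline{\mathcal{C}}^{\rm op}$, and dually $\mathbb{E}(C,-)$ descends to $\overline{\mathcal{C}}$ using $\mathcal{I}_{\mathbb{E}}$. Both ideals are genuinely ideals because $\mathbb{E}$ is a functor, so composites $g f h$ with $f$ in the ideal stay in the ideal. For a pair of morphisms $(\underline{c},\overline{a})$ I set $\mathbb{E}(\underline{c},\overline{a}):=\mathbb{E}(c,a)$, which is independent of the chosen representatives by the two vanishing statements above. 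Functoriality and $R$-bilinearity are then inherited from $\mathbb{E}$.

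For (3), I argue both directions.
\begin{itemize}
\item[] First suppose $A$ is projective. Then $\mathbb{E}(A,-)=0$, so $\mathbb{E}(1_A,-)=0$. Hence $1_A\in\mathcal{P}_{\mathbb{E}}$, which gives $\underline{1_A}=0$ and so $A\cong0$ in $\underline{\mathcal{C}}$.
\item[] Conversely, suppose $A\cong 0$ in $\underline{\mathcal{C}}$. Then $\underline{1_A}=0$, meaning $1_A\in\mathcal{P}_{\mathbb{E}}$. Therefore $\mathbb{E}(1_A,-)=\mathrm{id}_{\mathbb{E}(A,-)}$ is zero, so $\mathbb{E}(A,-)=0$ and $A$ is projective.
\end{itemize}
Part (2) is the exact dual, using $\mathcal{I}_{\mathbb{E}}$ and $\mathbb{E}(-,B)$.

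There is no real obstacle here. The only point needing care is checking that $\mathcal{P}_{\mathbb{E}}$ and $\mathcal{I}_{\mathbb{E}}$ are two-sided ideals closed under addition. This follows immediately from the functoriality and additivity of $\mathbb{E}$ in each variable.
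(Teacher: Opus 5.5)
Your proof is correct. The paper gives no proof of its own and only cites \cite[Proposition 1.23]{INP}; your argument is the routine verification behind that citation: $\mathbb{E}$ vanishes on $\mathcal{P}_{\mathbb{E}}$ and $\mathcal{I}_{\mathbb{E}}$ for (1), and for (3) $A$ is projective $\iff \mathbb{E}(1_A,-)=0 \iff 1_A\in\mathcal{P}_{\mathbb{E}} \iff A\cong 0$ in $\underline{\mathcal{C}}$, with (2) dual. This works directly because the paper defines projectives by $\mathbb{E}(P,-)=0$; with the lifting-property definition of \cite{NP} you would first need the equivalence of the two notions.
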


In the rest, we assume $R$ is a commutative artin ring. Denote by mod$R$ the finitely generated $R$-module and $J$ its minimal injective cogenerator. Recall the Matlis duality $\mathbb{D}:={\rm Hom}_{R}(-,J)$ on mod$R$ (cf. \cite[Chapter II]{ARS}).

\begin{defn}\cite[Definition 3.4]{INP}
	Let $(\mathcal{C},\mathbb{E},\mathfrak{s})$ be an $R$-linear extriangulated category
	\begin{enumerate}
		\item A {\em right Auslander-Reiten-Serre (ARS) duality} is a pair $(\tau,\eta)$ of an $R$-linear functor $\tau: \underline{\mathcal{C}}\rightarrow \overline{\mathcal{C}}$ and a binatural isomorphism of $R$-modules
		\[\eta_{A,B}:\underline{\mathcal{C}}(A,B)\simeq \mathbb{D}\mathbb{E}(B,\tau A)\text{ for any }A,B\in \mathcal{C}.\]
		\item If moreover $\tau$ is an equivalence, we say that $(\tau,\eta)$ is an {\em Auslander-Reiten-Serre (ARS) duality}.
	\end{enumerate}
\end{defn}

Dually, there is the notion of {\em left ARS duality}. Note that if $(\tau,\eta)$ is a right ARS duality, then $\tau$ is fully faithful (see \cite[Remark 3.5]{INP}). The existence of ARS duality is closely related to the existence of {\em almost split extensions} \cite[Theorem 3.6]{INP}.

\section{Induced Auslander-Reiten-Serre duality}

In this section, we show that a right ARS duality of an extriangulated category induces right ARS dualities of the relative theories. Throughout, let $R$ be a commutative artin ring and $(\mathcal{C},\mathbb{E},\mathfrak{s})$ be an $R$-linear {\em Ext-finite} extriangulated category, that is, $\mathbb{E}(C,A)$ is a finitely generated $R$-module for any $C,A\in \mathcal{C}$. First, we show that there is an equivalent definition for right ARS dualities under Ext-finiteness.

\begin{lem}\label{equi def}
	Let $\tau: \underline{\mathcal{C}}\rightarrow \overline{\mathcal{C}}$ be an $R$-linear functor. The following are equivalent.
	\begin{enumerate}
		\item There is a binatural isomorphism of $R$-modules $\underline{\mathcal{C}}(A,B)\simeq \mathbb{D}\mathbb{E}(B,\tau A)$ for any $A,B\in \mathcal{C}$.
		\item There is a binatural isomorphism of $R$-modules $\mathbb{E}(A,B)\simeq \mathbb{D}\overline{\mathcal{C}}(B,\tau A)$ for any $A,B\in \mathcal{C}$.
	\end{enumerate}
\end{lem}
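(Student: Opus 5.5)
The plan is to translate each binatural isomorphism into a nondegenerate bilinear pairing valued in $J$, and to build one pairing from the other using only the identity morphisms and the long exact sequences of Lemma \ref{exact seq}. Ext-finiteness makes $\mathbb{D}$ a duality on the modules involved. A binatural isomorphism $\underline{\mathcal{C}}(A,B)\simeq \mathbb{D}\mathbb{E}(B,\tau A)$ is the same thing as a bilinear pairing $\mathbb{E}(B,\tau A)\times \underline{\mathcal{C}}(A,B)\to J$ that is binatural and nondegenerate in both variables. Nondegeneracy on the $\mathbb{E}$-side follows from injectivity of $\mathbb{D}\mathbb{E}\to \mathbb{D}\underline{\mathcal{C}}$ together with $\mathbb{D}\mathbb{D}\mathbb{E}\cong\mathbb{E}$, which holds because $\mathbb{E}(B,\tau A)$ is finitely generated; this also makes $\underline{\mathcal{C}}(A,B)$ finitely generated. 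I will prove (1)$\Rightarrow$(2) and argue that (2)$\Rightarrow$(1) is the formal dual, with the roles of $\underline{\mathcal{C}}$ and $\overline{\mathcal{C}}$ and of the two sequences in Lemma \ref{exact seq} interchanged.

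For (1)$\Rightarrow$(2), let $\eta$ be the given isomorphism and put $\theta_A:=\eta_{A,A}(\underline{{\rm id}_A})\in \mathbb{D}\mathbb{E}(A,\tau A)$. For $\delta\in\mathbb{E}(A,B)$ and $f\in\mathcal{C}(B,\tau A)$ I define
\[\langle \delta, f\rangle:=\theta_A(f_*\delta).\]
This depends only on $\overline{f}$ by Lemma \ref{(co)stable cat}(1), and binaturality in $A$ and $B$ follows from the binaturality of $\eta$; this is routine to check. By naturality of $\eta$ in the second variable, $\eta_{A,X}(\underline{h})(\varepsilon)=\theta_A(h^*\varepsilon)$ for $h:A\to X$ and $\varepsilon\in\mathbb{E}(X,\tau A)$. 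Once the pairing is shown nondegenerate, finite generation turns it into the isomorphism $\mathbb{E}(A,B)\simeq\mathbb{D}\overline{\mathcal{C}}(B,\tau A)$.

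\emph{Nondegeneracy in $\overline{f}$.} Suppose $\overline{f}\neq 0$. Then there is some $\delta'\in\mathbb{E}(X,B)$ with $f_*\delta'\neq0$. By (1) some $\underline{h}\in\underline{\mathcal{C}}(A,X)$ satisfies
\[0\neq \eta_{A,X}(\underline{h})(f_*\delta')=\theta_A(h^*f_*\delta')=\langle h^*\delta', f\rangle .\]

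\emph{Nondegeneracy in $\delta$.} This is the step I expect to be the main one. Take $\delta\neq 0$, realized by $B\xrightarrow{x}E\xrightarrow{y}A\stackrel{\delta}{\dashrightarrow}$. By Lemma \ref{exact seq}, the sequence
\[\mathcal{C}(B,\tau A)\xrightarrow{\delta^\sharp}\mathbb{E}(A,\tau A)\xrightarrow{y^*}\mathbb{E}(E,\tau A)\]
is exact. Applying $\mathbb{D}$ and identifying through $\eta$, the map $\mathbb{D}(y^*)$ becomes $\underline{\mathcal{C}}(A,E)\to\underline{\mathcal{C}}(A,A)$, $\underline{g}\mapsto \underline{yg}$. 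Exactness of $\mathbb{D}$ on finitely generated modules then shows that $\mathbb{D}(\delta^\sharp)(\theta_A)=0$ exactly when $\underline{{\rm id}_A}=\underline{yg}$ for some $g$. In that case ${\rm id}_A-yg\in\mathcal{P}_{\mathbb{E}}$, and hence
\[\delta=({\rm id}_A-yg)^*\delta+g^*y^*\delta=0,\]
using $y^*\delta=0$. Since $\delta\neq0$, we get $\mathbb{D}(\delta^\sharp)(\theta_A)\neq0$, that is, $\langle\delta,f\rangle\neq0$ for some $f$. One subtle point is that $\mathcal{C}(B,\tau A)$ need not be finitely generated, so here I only use that $\theta_A\circ\delta^\sharp$ is a nonzero map. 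The finite generation of $\overline{\mathcal{C}}(B,\tau A)$ then follows a posteriori, since it embeds into $\mathbb{D}\mathbb{E}(A,B)$.
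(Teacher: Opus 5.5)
Your proposal is correct and follows essentially the same route as the paper. It uses the same pairing $(\delta,\overline{f})\mapsto\eta_A(\overline{f}_*\delta)$ and the same argument for nondegeneracy in $\overline{f}$. For nondegeneracy in $\delta$, you give the paper's argument in dualized, contrapositive form via the exact sequence of Lemma \ref{exact seq}; the paper instead uses a functional on ${\rm coker}\,\underline{y}_*$ that is nonzero at $\underline{{\rm id}_A}$, pulls it back to some $\rho\in\ker y^*$, and lifts it along $\delta^\sharp$ by ${\rm (ET3)^{op}}$. Your final step (two injections plus finite generation) is also a slightly more careful justification of the isomorphism than the paper's remark that $\mathbb{E}(A,B)\neq 0$ if and only if $\overline{\mathcal{C}}(B,\tau A)\neq 0$.
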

\begin{proof}
	We only show (1) implies (2), the other direction is similar. Let $\eta_{A,B}$ denote the isomorphism in (1). Define $\eta_{A}:=\eta_{A,A}(\underline{{\rm id}_{A}})$. For any $A,B\in \mathcal{C}$, define an $R$-bilinear form
	\[\mathbb{E}(A,B)\times \overline{\mathcal{C}}(B,\tau A)\longrightarrow J\]
	by $(\delta,\overline{f})\longmapsto \eta_{A}(\overline{f}_{*}\delta)$. We show it is non-degenerate. Assume $\overline{f}\neq 0$. By definition, there exists $X\in \mathcal{C}$ satisfying $\mathbb{E}(X,f)\neq 0$. Then there exists an $\mathbb{E}$-extension $\alpha \in \mathbb{E}(X,B)$ such that $\overline{f}_{*}\alpha \in \mathbb{E}(X,\tau A)$ is nonzero. By Ext-finiteness, $\overline{f}_{*}\alpha \in \mathbb{E}(X,\tau A)\simeq \mathbb{D}\underline{\mathcal{C}}(A,X)$ corresponds to a nonzero element $h\in \mathbb{D}\underline{\mathcal{C}}(A,X)$. Thus there exists $\underline{c}\in \underline{\mathcal{C}}(A,X)$ satisfying $h(\underline{c})\neq 0$. Consider the following commutative diagram
	\[\begin{tikzcd}
		\mathbb{E}(X,\tau A) \arrow[r,"\underline{c}^{*}"] \arrow[d,"\simeq"] & \mathbb{E}(A,\tau A) \arrow[d,"\simeq"] \\
		\mathbb{D}\underline{\mathcal{C}}(A,X) \arrow[r,"\mathbb{D}(\underline{c}_{*})"] & \mathbb{D}\underline{\mathcal{C}}(A,A).
	\end{tikzcd}\]
	Let $\delta:=\underline{c}^{*}\alpha \in \mathbb{E}(A,B)$, then $(\delta,\overline{f})\longmapsto \eta_{A}(\overline{f}_{*}\delta)=\eta_{A}(\underline{c}^{*}\overline{f}_{*}\alpha)=\mathbb{D}(\underline{c}_{*})(h)(\underline{{\rm id}_{A}})=h(\underline{c})\neq 0$. On the other hand, assume $\delta \in \mathbb{E}(A,B)$ is non-split. Then in $\mathfrak{s}$-triangle $B\xrightarrow{a} C\xrightarrow{b} A\stackrel{\delta}\dashrightarrow $, the morphism $b$ is a non-retraction. Consider the following commutative diagram
	\[\begin{tikzcd}
		\mathbb{E}(A,\tau A) \arrow[r,"\underline{b}^{*}"] \arrow[d,"\simeq"] & \mathbb{E}(C,\tau A) \arrow[d,"\simeq"] \\
		\mathbb{D}\underline{\mathcal{C}}(A,A) \arrow[r,"\mathbb{D}(\underline{b}_{*})"] & \mathbb{D}\underline{\mathcal{C}}(A,C).
	\end{tikzcd}\]
	By assumption and Lemma \ref{(co)stable cat}, the $R$-module $\underline{\mathcal{C}}(A,A)$ is nonzero. Since $b$ is a non-retraction, the homomorphism $\underline{\mathcal{C}}(A,C)\xrightarrow{\underline{b}_{*}} \underline{\mathcal{C}}(A,A)$ is not a surjection. Indeed, assume otherwise, there exists $\underline{b'}:A\rightarrow C$ such that $\underline{bb'}=\underline{{\rm id}_{A}}$. Then ${\rm id}_{A}-bb'\in \mathcal{P}_{\mathbb{E}}$, which implies the existence of $d:A\rightarrow C$ such that ${\rm id}_{A}=b(b'+d)$, a contradiction. Let $M:={\rm coker}\,\underline{b}_{*}$. The image $[\underline{{\rm id}_{A}}]$ of $\underline{{\rm id}_{A}}$ in $M$ is nonzero. Then there exists a nonzero $g\in \mathbb{D}M\subseteq \mathbb{D}\underline{\mathcal{C}}(A,A)$ such that $g([\underline{{\rm id}_{A}}])\neq 0$. Let $\rho \in \mathbb{E}(A,\tau A)$ be the $\mathbb{E}$-extension corresponding to $g$. Then $\underline{b}^{*}\rho =0$ holds by the above commutative diagram. Therefore we obtain a morphism of $\mathfrak{s}$-triangles by ${\rm (ET3)^{op}}$
	\[\begin{tikzcd}
		B \arrow[r,"a"] \arrow[d,dashed,"\exists f",swap] & C \arrow[r,"b"] \arrow[d,dashed] & A \arrow[r,dashed,"\delta"] \arrow[d,equal] & {} \\
		\tau A \arrow[r] & D \arrow[r] & A \arrow[r,"\rho",dashed] & {}.
	\end{tikzcd}\]
	Then we have $(\delta,\overline{f})\longmapsto \eta_{A}(\overline{f}_{*}\delta)=\eta_{A}(\rho)=g([\underline{{\rm id}_{A}}])\neq 0$.
	
	Note that the above discussion implies $\mathbb{E}(A,B)\neq 0$ if and only if $\overline{\mathcal{C}}(B,\tau A)\neq 0$. Thus by Ext-finiteness, we have isomorphism of $R$-modules $\phi_{A,B}:\mathbb{E}(A,B)\simeq \mathbb{D}\overline{\mathcal{C}}(B,\tau A)$ given by $\delta \mapsto (\overline{f} \mapsto \eta_{A}(\overline{f}_{*}\delta))$ for any $A,B\in \mathcal{C}$.
	
	It suffices to show naturality of $\phi$. For any $\underline{u}:A'\rightarrow A$ and $\overline{v}:B\rightarrow B'$, consider the diagram
	\[\begin{tikzcd}
		\mathbb{E}(A,B) \arrow[r,"\phi_{A,B}"] \arrow[d,"\underline{u}^{*}\overline{v}_{*}"] & \mathbb{D}\overline{\mathcal{C}}(B,\tau A) \arrow[d,"\mathbb{D}(\tau(\underline{u})_{*}\overline{v}^{*})"] \\
		\mathbb{E}(A',B') \arrow[r,"\phi_{A',B'}"] & \mathbb{D}\overline{\mathcal{C}}(B',\tau A').
	\end{tikzcd}\]
	Let $\delta \in \mathbb{E}(A,B)$ and $\overline{f} \in \overline{\mathcal{C}}(B',\tau A')$, we have
	\[\mathbb{D}(\tau(\underline{u})_{*}\overline{v}^{*})(\phi_{A,B}(\delta))(\overline{f})=\phi_{A,B}(\delta)(\tau(\underline{u})\overline{fv})=\eta_{A}((\tau(\underline{u})\overline{fv})_{*}\delta)=\eta_{A'}(\underline{u}^{*}(\overline{fv})_{*}\delta)=\phi_{A',B'}(\underline{u}^{*}\overline{v}_{*}\delta)(\overline{f}).\]
	This completes the proof.

\end{proof}

\begin{thm}\label{main thm1}
	Let $\mathbb{F}\xhookrightarrow{\iota} \mathbb{E}$ be a closed subfunctor. Suppose $(\mathcal{C},\mathbb{E},\mathfrak{s})$ has a right ARS duality $(\tau,\eta)$. Then it induces a right ARS duality $(\tau_{\mathbb{F}},\eta^{\mathbb{F}})$ on the relative theory $(\mathcal{C},\mathbb{F},\mathfrak{s}|_{\mathbb{F}})$. Moreover, there are commutative diagrams as follows for any $A,B\in \mathcal{C}$.
	\[\begin{tikzcd}
		\underline{\mathcal{C}} \arrow[r,"\tau"] \arrow[d,"\pi"] & \overline{\mathcal{C}} \arrow[d,"\pi'"] \\
		\mathcal{C}/\mathcal{P}_{\mathbb{F}} \arrow[r,"\tau_{\mathbb{F}}"] & \mathcal{C}/\mathcal{I}_{\mathbb{F}}
	\end{tikzcd}\text{ and }
	\begin{tikzcd}
		\underline{\mathcal{C}}(A,B) \arrow[r,"\eta_{A,B}","\sim" swap] \arrow[d,"\pi"] & \mathbb{D}\mathbb{E}(B,\tau A) \arrow[d,"\mathbb{D}(\iota)"] \\
		\mathcal{C}/\mathcal{P}_{\mathbb{F}}(A,B) \arrow[r,"\eta^{\mathbb{F}}_{A,B}","\sim" swap] & \mathbb{D}\mathbb{F}(B,\tau_{\mathbb{F}} A)
	\end{tikzcd}\]
\end{thm}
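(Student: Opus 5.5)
The plan is to define everything on the same objects as $\tau$, and to obtain the morphism-level statements from one pairing identity. Since $\mathbb{F}\subseteq\mathbb{E}$, any $f$ with $\mathbb{E}(f,-)=0$ satisfies $\mathbb{F}(f,-)=0$. Hence $\mathcal{P}_{\mathbb{E}}\subseteq\mathcal{P}_{\mathbb{F}}$, and likewise $\mathcal{I}_{\mathbb{E}}\subseteq\mathcal{I}_{\mathbb{F}}$. This gives the quotient functors $\pi:\underline{\mathcal{C}}\to\mathcal{C}/\mathcal{P}_{\mathbb{F}}$ and $\pi':\overline{\mathcal{C}}\to\mathcal{C}/\mathcal{I}_{\mathbb{F}}$.

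Put $\eta_A:=\eta_{A,A}(\underline{{\rm id}_A})\in\mathbb{D}\mathbb{E}(A,\tau A)$. Binaturality of $\eta$ (naturality in the first variable applied to $\underline{c}=\underline{c}\circ\underline{{\rm id}_A}$) gives the key identity
\[\eta_{A,X}(\underline{c})(\theta)=\eta_A(c^{*}\theta)\quad\text{for } c:A\to X,\ \theta\in\mathbb{E}(X,\tau A).\]
Lemma \ref{equi def} shows that the pairing $(\delta,\overline{f})\mapsto\eta_A(\overline{f}_{*}\delta)$ on $\mathbb{E}(A,B)\times\overline{\mathcal{C}}(B,\tau A)$ is non-degenerate. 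I will use these two facts throughout.

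\emph{Step 1: the functor $\tau_{\mathbb{F}}$.} Set $\tau_{\mathbb{F}}A:=\tau A$. To define $\tau_{\mathbb{F}}$ on morphisms I must show that $u\in\mathcal{P}_{\mathbb{F}}(A,A')$ implies $\pi'\tau(\underline{u})=0$, i.e.\ $(\tau u)_{*}\theta=0$ for every $\theta\in\mathbb{F}(X,\tau A)$. Since $\eta_{A',X}$ is an isomorphism, it suffices to show $\eta_{A',X}(\underline{c})((\tau u)_*\theta)=0$ for all $c:A'\to X$. By the identity this equals $\eta_{A'}(c^*(\tau u)_*\theta)$. Naturality of $\eta$ in the first variable along $u$ turns it into $\eta_A((cu)^*\theta)$. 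This vanishes because $cu\in\mathcal{P}_{\mathbb{F}}$ and $\theta\in\mathbb{F}$. Hence $\tau_{\mathbb{F}}(\pi\underline{u}):=\pi'\tau(\underline{u})$ is well defined, $R$-linear and functorial, and the left square commutes by construction.

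\emph{Step 2: the isomorphism $\eta^{\mathbb{F}}$.} Consider the composite $\mathbb{D}(\iota)\circ\eta_{A,B}:\underline{\mathcal{C}}(A,B)\to\mathbb{D}\mathbb{F}(B,\tau A)$. It is surjective, since $\iota$ is injective and $J$ is injective, so $\mathbb{D}(\iota)$ is surjective. I claim its kernel is exactly $\mathcal{P}_{\mathbb{F}}(A,B)/\mathcal{P}_{\mathbb{E}}(A,B)$. By the identity, $\underline{c}$ lies in the kernel iff $\eta_A(c^*\theta)=0$ for all $\theta\in\mathbb{F}(B,\tau A)$.

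If $c\in\mathcal{P}_{\mathbb{F}}$, then $c^*\theta=0$, so $\underline{c}$ is in the kernel.

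Conversely, suppose $c\notin\mathcal{P}_{\mathbb{F}}$, so $c^*\theta'\neq0$ for some $\theta'\in\mathbb{F}(B,Y)$. Note that $c^*\theta'\in\mathbb{E}(A,Y)$. Non-degeneracy from Lemma \ref{equi def} yields $\overline{f}:Y\to\tau A$ with $\eta_A(f_*c^*\theta')\ne0$. Since $\mathbb{F}$ is a subfunctor, $\theta:=f_*\theta'\in\mathbb{F}(B,\tau A)$, and then $\eta_A(c^*\theta)\neq0$. So $\underline{c}$ is not in the kernel.

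Thus the composite induces an isomorphism $\eta^{\mathbb{F}}_{A,B}:\mathcal{C}/\mathcal{P}_{\mathbb{F}}(A,B)\simeq\mathbb{D}\mathbb{F}(B,\tau_{\mathbb{F}}A)$, and the right square commutes by definition.

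\emph{Step 3: binaturality.} Naturality of $\eta^{\mathbb{F}}$ in both variables descends from that of $\eta$. The functors $\pi$ and $\mathbb{D}(\iota)$ are surjective and compatible with the actions: $\iota$ is natural, and $\tau_{\mathbb{F}}\pi=\pi'\tau$, where $\mathbb{F}(B,-)$ only sees morphisms modulo $\mathcal{I}_{\mathbb{F}}$. So the naturality squares commute after precomposition with a surjection, hence commute. The relative theory is again Ext-finite, since $\mathbb{F}(C,A)$ is a submodule of $\mathbb{E}(C,A)$ over an artin ring, so $(\tau_{\mathbb{F}},\eta^{\mathbb{F}})$ is a right ARS duality.

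The main obstacle I expect is the kernel computation in Step 2, specifically the converse direction. It is the only place where the non-degeneracy of the second pairing from Lemma \ref{equi def} is genuinely needed, together with closure of $\mathbb{F}$ under $f_*$. Everything else is bookkeeping with binaturality.
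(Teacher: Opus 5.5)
Your proposal is correct and follows essentially the same route as the paper: $\pi'\tau$ factors through $\pi$ via the vanishing $\eta_A((cu)^{*}\theta)=0$, which you argue directly where the paper argues by contradiction. The remaining steps also match: injectivity via the non-degenerate pairing of Lemma \ref{equi def} together with $f_{*}$-closure of $\mathbb{F}$, surjectivity from $\mathbb{D}(\iota)$ (which the paper leaves implicit), and naturality descending along the surjection $\pi$. One cosmetic slip: your key identity $\eta_{A,X}(\underline{c})(\theta)=\eta_A(c^{*}\theta)$ comes from naturality of $\eta$ in the second variable (post-composition by $\underline{c}$), not the first.
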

\begin{proof}
	Since $\mathbb{F}$ is a subfunctor of $\mathbb{E}$, we have $\mathcal{P}_{\mathbb{E}}\subseteq \mathcal{P}_{\mathbb{F}}$ and $\mathcal{I}_{\mathbb{E}}\subseteq \mathcal{I}_{\mathbb{F}}$. Thus there exist canonical functors $\pi:\underline{\mathcal{C}}\rightarrow \mathcal{C}/\mathcal{P}_{\mathbb{F}}$ and $\pi':\overline{\mathcal{C}}\rightarrow \mathcal{C}/\mathcal{I}_{\mathbb{F}}$. Define $\eta_{A}:=\eta_{A,A}(\underline{{\rm id}_{A}})$. Consider the composition
	\[\underline{\mathcal{C}}(A,B)\xrightarrow{\eta_{A,B}} \mathbb{D}\mathbb{E}(B,\tau A)\xrightarrow{\mathbb{D}(\iota)} \mathbb{D}\mathbb{F}(B,\tau A)\text{ for any }A,B\in \mathcal{C}.\]
	Let $f\in \mathcal{P}_{\mathbb{F}}(A,B)$. For any $\delta \in \mathbb{F}(B,\tau A)$, we have $\mathbb{D}(\iota)(\eta_{A,B}(\underline{f}))(\delta)=\eta_{A,B}(\underline{f})(\delta)=\eta_{A}(\underline{f}^{*}\delta)=0$. It implies that the composition $\mathbb{D}(\iota)\circ \eta_{A,B}$ factors through $\pi:\underline{\mathcal{C}}(A,B)\rightarrow \mathcal{C}/\mathcal{P}_{\mathbb{F}}(A,B)$. Thus, we obtain a commutative diagram
	\begin{equation}\label{diagram1}
		\begin{tikzcd}
			\underline{\mathcal{C}}(A,B) \arrow[r,"\eta_{A,B}","\sim" swap] \arrow[d,"\pi"] & \mathbb{D}\mathbb{E}(B,\tau A) \arrow[d,"\mathbb{D}(\iota)"] \\
			\mathcal{C}/\mathcal{P}_{\mathbb{F}}(A,B) \arrow[r] & \mathbb{D}\mathbb{F}(B,\tau A).
		\end{tikzcd}
	\end{equation}
	We show the preimage of $\tau(\underline{f})$ in $\mathcal{C}(\tau A,\tau B)$ lies in $\mathcal{I}_{\mathbb{F}}(\tau A, \tau B)$. Assume otherwise, by definition, there exists $X\in \mathcal{C}$ satisfying $\mathbb{F}(X,\tau(\underline{f}))\neq 0$. Then there exists $\alpha \in \mathbb{F}(X,\tau A)$ such that $\tau(\underline{f})_{*}\alpha \in \mathbb{F}(X,\tau B)$ is nonzero. Since $\mathbb{F}(X,\tau B)\subseteq \mathbb{E}(X,\tau B)\simeq \mathbb{D}\underline{\mathcal{C}}(B,X)$, $\tau(\underline{f})_{*}\alpha$ corresponds to a nonzero element $h\in \mathbb{D}\underline{\mathcal{C}}(B,X)$. Thus there exists $\underline{b}\in \underline{\mathcal{C}}(B,X)$ such that $h(\underline{b})\neq 0$. Consider the following commutative diagram
	\[\begin{tikzcd}
		\mathbb{E}(X,\tau B) \arrow[r,"\underline{b}^{*}"] \arrow[d,"\simeq"] & \mathbb{E}(B,\tau B) \arrow[d,"\simeq"] \\
		\mathbb{D}\underline{\mathcal{C}}(B,X) \arrow[r,"\mathbb{D}(\underline{b}_{*})"] & \mathbb{D}\underline{\mathcal{C}}(B,B).
	\end{tikzcd}\]
	Let $\delta:=\underline{b}^{*}\alpha \in \mathbb{F}(B,\tau A)$. Then we have
	\[\mathbb{D}(\iota)(\eta_{A,B}(\underline{f}))(\delta)=\eta_{A,B}(\underline{f})(\delta)=\eta_{B}(\tau(\underline{f})_{*}\delta)=\eta_{B}(\underline{b}^{*}\tau(\underline{f})_{*}\alpha)=\mathbb{D}(\underline{b}_{*})(h)(\underline{{\rm id}_{B}})=h(\underline{b})\neq 0,\]
	a contradiction. Thus the composition of functors $\underline{\mathcal{C}}\xrightarrow{\tau} \overline{\mathcal{C}}\xrightarrow{\pi'} \mathcal{C}/\mathcal{I}_{\mathbb{F}}$ factors through $\pi:\underline{\mathcal{C}}\rightarrow \mathcal{C}/\mathcal{P}_{\mathbb{F}}$. Denote the induced functor $\mathcal{C}/\mathcal{P}_{\mathbb{F}}\rightarrow \mathcal{C}/\mathcal{I}_{\mathbb{F}}$ by $\tau_{\mathbb{F}}$, then we obtain the first desired commutative diagram
	\begin{equation}\label{diagram2}
		\begin{tikzcd}
			\underline{\mathcal{C}} \arrow[r,"\tau"] \arrow[d,"\pi"] & \overline{\mathcal{C}} \arrow[d,"\pi'"] \\
			\mathcal{C}/\mathcal{P}_{\mathbb{F}} \arrow[r,"\tau_{\mathbb{F}}"] & \mathcal{C}/\mathcal{I}_{\mathbb{F}}.
		\end{tikzcd}
	\end{equation}
	
	Let $\underline{g}\in \underline{\mathcal{C}}(A,B)$ such that $\mathbb{D}(\iota)(\eta_{A,B}(\underline{g}))=0$. We show $g\in \mathcal{P}_{\mathbb{F}}$. Assume otherwise, then there exists $Y\in \mathcal{C}$ satisfying $\mathbb{F}(g,Y)\neq 0$. Thus there exists $\beta \in \mathbb{F}(B,Y)$ such that $\underline{g}^{*}\beta \in \mathbb{F}(A,Y)$ is nonzero. By Lemma \ref{equi def}, there is a binatural isomorphism of $R$-modules $\phi_{A,B}:\mathbb{E}(A,B)\simeq \mathbb{D}\overline{\mathcal{C}}(B,\tau A)$ given by $\delta \mapsto (\overline{f} \mapsto \eta_{A}(\overline{f}_{*}\delta))$ for any $A,B\in \mathcal{C}$. Since $\mathbb{F}(A,Y)\subseteq \mathbb{E}(A,Y)\simeq \mathbb{D}\overline{\mathcal{C}}(Y,\tau A)$, $\underline{g}^{*}\beta$ corresponds to a nonzero element $h'\in \mathbb{D}\overline{\mathcal{C}}(Y,\tau A)$. Thus there exists $\overline{c}\in \overline{\mathcal{C}}(Y,\tau A)$ such that $h'(\overline{c})\neq 0$. Consider the following commutative diagram
	\[\begin{tikzcd}
		\mathbb{E}(A,Y) \arrow[r,"\overline{c}_{*}"] \arrow[d,"\simeq"] & \mathbb{E}(A,\tau A) \arrow[d,"\simeq"] \\
		\mathbb{D}\overline{\mathcal{C}}(Y,\tau A) \arrow[r,"\mathbb{D}(\overline{c}^{*})"] & \mathbb{D}\overline{\mathcal{C}}(\tau A,\tau A).
	\end{tikzcd}\]
	Let $\delta':=\overline{c}_{*}\beta \in \mathbb{F}(B,\tau A)$. Then we have
	\[\mathbb{D}(\iota)(\eta_{A,B}(\underline{g}))(\delta')=\eta_{A,B}(\underline{g})(\delta')=\eta_{A}(\underline{g}^{*}\delta')=\eta_{A}(\overline{c}_{*}\underline{g}^{*}\beta)=\mathbb{D}(\overline{c}^{*})(h')(\overline{{\rm id}_{\tau A}})=h'(\overline{c})\neq 0\]
	a contradiction. This implies that the $R$-module homomorphism in the second row of (\ref{diagram1}) is an isomorphism. Thus we obtain $\eta^{\mathbb{F}}_{A,B}:\mathcal{C}/\mathcal{P}_{\mathbb{F}}(A,B)\simeq \mathbb{D}\mathbb{F}(B,\tau_{\mathbb{F}} A)$ by (\ref{diagram2}), and the second desired commutative diagram. Moreover, the naturality of $\eta^{\mathbb{F}}_{A,B}$ follows immediately from that of $\pi,\eta,\iota$ and (\ref{diagram1}). This completes the proof.
\end{proof}

Next we show that right ARS dualities are unique under Ext-finiteness.

\begin{prop}\label{uniqueness of right ARS}
	Suppose $(\tau,\eta)$ and $(\tau',\eta')$ are both right ARS dualities on $(\mathcal{C},\mathbb{E},\mathfrak{s})$. Then there exists a unique natural isomorphism $\alpha:\tau\xRightarrow{\sim} \tau'$ satisfying the following commutative diagram.
	\[\begin{tikzcd}
		\underline{\mathcal{C}}(A,B) \arrow[r,"\eta^{'}_{A{,}B}","\sim" swap] \arrow[d,equal] & \mathbb{D}\mathbb{E}(B,\tau'A) \arrow[d,"\mathbb{D}((\alpha_{A})_{*})"] \\
		\underline{\mathcal{C}}(A,B) \arrow[r,"\eta_{A{,}B}","\sim" swap] & \mathbb{D}\mathbb{E}(B,\tau A)
	\end{tikzcd}\]
\end{prop}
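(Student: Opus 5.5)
The argument is a Yoneda argument. The key input is Lemma \ref{equi def}, which turns both dualities into representations of the same functor on $\overline{\mathcal{C}}$.

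By Lemma \ref{equi def} and its proof, for each $A$ the duality $(\tau,\eta)$ gives a binatural isomorphism
\[\phi_{A,B}:\mathbb{E}(A,B)\simeq \mathbb{D}\overline{\mathcal{C}}(B,\tau A),\qquad \delta\mapsto \big(\overline{f}\mapsto \eta_A(\overline{f}_*\delta)\big).\]
Similarly $(\tau',\eta')$ gives $\phi'_{A,B}:\mathbb{E}(A,B)\simeq \mathbb{D}\overline{\mathcal{C}}(B,\tau' A)$. By Ext-finiteness all these modules are finitely generated, so Matlis duality is an involution on them. We therefore obtain isomorphisms
\[\overline{\mathcal{C}}(B,\tau A)\simeq \mathbb{D}\mathbb{E}(A,B)\simeq \overline{\mathcal{C}}(B,\tau'A),\]
natural in $B\in\overline{\mathcal{C}}$. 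Call the composite $\theta_{A,B}$. The Yoneda lemma in $\overline{\mathcal{C}}$ then gives a unique isomorphism $\alpha_A:=\theta_{A,\tau A}(\overline{{\rm id}_{\tau A}})$ in $\overline{\mathcal{C}}(\tau A,\tau'A)$ with $\theta_{A,-}=(\alpha_A)_*$. Naturality of $\alpha$ in $A\in\underline{\mathcal{C}}$ follows from the naturality of $\phi,\phi'$ in the first variable: for $\underline{u}:A'\to A$, both $\tau'(\underline{u})\alpha_{A}$ and $\alpha_{A'}\tau(\underline{u})$ correspond under Yoneda to the same transformation $\overline{\mathcal{C}}(-,\tau A)\Rightarrow\overline{\mathcal{C}}(-,\tau' A')$.

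Next we check the diagram. The square in the statement is equivalent to the identity
\[\eta'_{A,B}(\underline{g})\big((\alpha_A)_*\delta\big)=\eta_{A,B}(\underline{g})(\delta)\qquad\text{for }\delta\in\mathbb{E}(B,\tau A).\]
By binaturality, $\eta_{A,B}(\underline{g})(\delta)=\eta_A(\underline{g}^*\delta)$, and likewise for $\eta'$. Since $\underline{g}^*$ commutes with $(\alpha_A)_*$, it suffices to prove
\[\eta'_A\big((\alpha_A)_*\varepsilon\big)=\eta_A(\varepsilon)\qquad\text{for all }\varepsilon\in\mathbb{E}(A,\tau A).\]
Unwinding the definition of $\alpha_A$, we have $\phi'_{A,\tau A}(\varepsilon)(\overline{\alpha_A})=\phi_{A,\tau A}(\varepsilon)(\overline{{\rm id}})$, which is exactly this identity. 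The main care needed is to match the directions of the Matlis double-dual identifications, so that $\theta$ is defined consistently.

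Finally, uniqueness. Suppose $\alpha'$ also makes the diagram commute. Then $\eta'_A((\alpha_A-\alpha'_A)_*\varepsilon)=0$ for all $\varepsilon$. Hence $\phi'_{A,\tau A}(\varepsilon)(\overline{\alpha_A-\alpha'_A})=0$ for all $\varepsilon$. Since the pairing from Lemma \ref{equi def} is non-degenerate, $\overline{\alpha_A}=\overline{\alpha'_A}$ in $\overline{\mathcal{C}}$.
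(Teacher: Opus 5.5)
Your proposal is correct and follows essentially the same route as the paper: identify $\overline{\mathcal{C}}(-,\tau A)\simeq \mathbb{D}\mathbb{E}(A,-)\simeq \overline{\mathcal{C}}(-,\tau'A)$ via Lemma \ref{equi def}, extract $\alpha_A$ by Yoneda, reduce the square to $\eta'_A((\alpha_A)_*\varepsilon)=\eta_A(\varepsilon)$, and get uniqueness from non-degeneracy (the paper phrases this as $\mathbb{E}(-,\alpha_A-\widetilde{\alpha}_A)=0$). One small slip: in your naturality check, the composites $\tau'(\underline{u})\alpha_A$ and $\alpha_{A'}\tau(\underline{u})$ only typecheck for $\underline{u}:A\to A'$, not for $A'\to A$ as you wrote.
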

\begin{proof}
	By Lemma \ref{equi def}, we have $\overline{\mathcal{C}}(-,\tau A)\simeq \mathbb{D}\mathbb{E}(A,-)\simeq \overline{\mathcal{C}}(-,\tau'A)$. By Yoneda's lemma, the composition corresponds to a functorial isomorphism $\alpha_{A}:\tau A\rightarrow \tau'A$ in $\overline{\mathcal{C}}$. Let $\eta_{A}:=\eta_{A,A}(\underline{{\rm id}_{A}})$ and $\eta'_{A}:=\eta'_{A,A}(\underline{{\rm id}_{A}})$. By definition of $\alpha$, we have $\eta_{A}(\delta)=\eta'_{A}((\alpha_{A})_{*}\delta)$ for any $\delta \in \mathbb{E}(A,\tau A)$. Then for any $\underline{f}\in \underline{\mathcal{C}}(A,B)$ and $\rho \in \mathbb{E}(B,\tau A)$, we have
	\[\mathbb{D}((\alpha_{A})_{*})( \eta'_{A,B}(\underline{f}))(\rho)=\eta'_{A,B}(\underline{f})((\alpha_{A})_{*}\rho)=\eta'_{A}(\underline{f}^{*}(\alpha_{A})_{*}\rho)=\eta_{A}(\underline{f}^{*}\rho)=\eta_{A,B}(\underline{f})(\rho),\]
	which proves the commutativity. Let $\widetilde{\alpha}:\tau\xRightarrow{\sim} \tau'$ be another natural isomorphism which makes the diagram commute. Then $\mathbb{E}(-,\alpha_{A}-\widetilde{\alpha}_{A})$ is zero. By definition, we have $\alpha_{A}=\widetilde{\alpha}_{A}$.
\end{proof}

\begin{prop}\label{left & right ARS duality}
	Suppose $(\mathcal{C},\mathbb{E},\mathfrak{s})$ has a right ARS duality $(\tau,\eta)$. The following are equivalent.
	\begin{enumerate}
		\item $(\tau,\eta)$ is an ARS duality.
		\item There exists a left ARS duality on $(\mathcal{C},\mathbb{E},\mathfrak{s})$.
	\end{enumerate}
\end{prop}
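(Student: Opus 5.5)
We prove both implications. The key input is Lemma \ref{equi def}, which rewrites a right ARS duality $(\tau,\eta)$ as a binatural isomorphism $\mathbb{E}(A,B)\simeq \mathbb{D}\overline{\mathcal{C}}(B,\tau A)$. We use it together with its dual for left ARS dualities. By definition, a left ARS duality is a pair $(\tau^-,\eta^-)$ with $\tau^-:\overline{\mathcal{C}}\to\underline{\mathcal{C}}$ and $\overline{\mathcal{C}}(A,B)\simeq \mathbb{D}\mathbb{E}(\tau^- B,A)$. The dual lemma gives $\mathbb{E}(A,B)\simeq \mathbb{D}\underline{\mathcal{C}}(\tau^- B,A)$, binaturally.

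\emph{(1)$\Rightarrow$(2).} Suppose $\tau$ is an equivalence, and choose a quasi-inverse $\tau^-:\overline{\mathcal{C}}\to\underline{\mathcal{C}}$. For $A,B$ we have the chain
\[
\overline{\mathcal{C}}(A,B)\simeq \overline{\mathcal{C}}(\tau\tau^-A,B)\simeq \mathbb{D}\mathbb{E}(\tau^- A,B)
\]
coming from Lemma \ref{equi def}. This is not yet the shape of a left duality, since the left-duality formula requires $\overline{\mathcal{C}}(A,B)\simeq\mathbb{D}\mathbb{E}(\tau^-B,A)$.

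So we use $\eta$ directly instead:
\[
\underline{\mathcal{C}}(\tau^- B,A)\simeq \mathbb{D}\mathbb{E}(A,\tau\tau^- B)\simeq \mathbb{D}\mathbb{E}(A,B).
\]
Here the second isomorphism is induced by the unit/counit isomorphism $\tau\tau^-B\simeq B$ in $\overline{\mathcal{C}}$. This is legitimate because $\mathbb{E}$ is a functor on $\underline{\mathcal{C}}^{\rm op}\times\overline{\mathcal{C}}$ by Lemma \ref{(co)stable cat}. Thus $\underline{\mathcal{C}}(\tau^-B,A)\simeq\mathbb{D}\mathbb{E}(A,B)$ binaturally, which is exactly the dual form (2) of Lemma \ref{equi def} for a left ARS duality. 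The dual of Lemma \ref{equi def} then converts it into $\overline{\mathcal{C}}(A,B)\simeq\mathbb{D}\mathbb{E}(\tau^-B,A)$. Binaturality is inherited from that of $\eta$ and from the naturality of the unit/counit.

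\emph{(2)$\Rightarrow$(1).} Let $(\tau^-,\eta^-)$ be a left ARS duality. Since $\tau$ is already fully faithful, by \cite[Remark 3.5]{INP}, it suffices to show that $\tau$ is essentially surjective. Fix $B$. For every $X$ we have the binatural isomorphisms
\[
\overline{\mathcal{C}}(X,B)\simeq \mathbb{D}\mathbb{E}(\tau^- B,X)\simeq \overline{\mathcal{C}}(X,\tau\tau^- B).
\]
The first comes from the left duality. The second is the Matlis dual of Lemma \ref{equi def}, namely $\mathbb{E}(\tau^-B,X)\simeq\mathbb{D}\overline{\mathcal{C}}(X,\tau\tau^-B)$, combined with $\mathbb{D}\mathbb{D}\simeq{\rm id}$. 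That last step uses Ext-finiteness, and also that $\overline{\mathcal{C}}(X,\tau\tau^-B)\simeq\mathbb{D}\mathbb{E}(\tau^-B,X)$ is finitely generated. The composite is a natural isomorphism of functors $\overline{\mathcal{C}}(-,B)\simeq\overline{\mathcal{C}}(-,\tau\tau^-B)$. By Yoneda, $B\simeq\tau(\tau^-B)$ in $\overline{\mathcal{C}}$, so $\tau$ is dense and hence an equivalence.

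The main obstacle is the bookkeeping of which variable each duality is natural in. We must check that the isomorphism in (2)$\Rightarrow$(1) is natural in $X$, so that Yoneda applies. This relies on binaturality in Lemma \ref{equi def} and its dual, and on the fact that the double-dual identification needs finite generation, which Ext-finiteness provides. Everything else is formal.
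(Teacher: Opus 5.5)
Your proof is correct. For (2)$\Rightarrow$(1) you follow the paper. The left duality and Lemma~\ref{equi def} give $\overline{\mathcal{C}}(-,B)\simeq\mathbb{D}\mathbb{E}(\tau^-B,-)\simeq\overline{\mathcal{C}}(-,\tau\tau^-B)$, and Yoneda gives $B\simeq\tau\tau^-B$. The paper then adds ``similarly ${\rm id}_{\underline{\mathcal{C}}}\simeq\tau^-\tau$''. You instead combine density with the full faithfulness of $\tau$ from \cite[Remark 3.5]{INP}, which is slightly more economical and equally valid.

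For (1)$\Rightarrow$(2) your route genuinely differs. The paper takes a quasi-inverse $\sigma$ of $\tau$ and writes directly
\[
\overline{\mathcal{C}}(A,B)\simeq\underline{\mathcal{C}}(\sigma A,\sigma B)\simeq\mathbb{D}\mathbb{E}(\sigma B,\tau\sigma A)\simeq\mathbb{D}\mathbb{E}(\sigma B,A).
\]
This uses only the full faithfulness of $\sigma$, $\eta_{\sigma A,\sigma B}$, and $\tau\sigma\simeq{\rm id}$. Neither Ext-finiteness nor any version of Lemma~\ref{equi def} is needed. Your route instead applies $\eta_{\tau^-B,A}$ to get $\underline{\mathcal{C}}(\tau^-B,A)\simeq\mathbb{D}\mathbb{E}(A,B)$ and Matlis-dualizes via Ext-finiteness. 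It then invokes the implication (2)$\Rightarrow$(1) of the \emph{dual} of Lemma~\ref{equi def}, a direction the paper only calls ``similar''. That implication does hold, since the non-degeneracy arguments in the proof of Lemma~\ref{equi def} adapt without trouble. Still, it makes this easy direction rest on more machinery than necessary.

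Finally, delete the first chain you wrote in (1)$\Rightarrow$(2). The claimed isomorphism $\overline{\mathcal{C}}(\tau\tau^-A,B)\simeq\mathbb{D}\mathbb{E}(\tau^-A,B)$ does not come from Lemma~\ref{equi def}, which places $\tau A$ in the \emph{second} argument of $\overline{\mathcal{C}}$. In fact the two sides have opposite variance in $B$, so no natural isomorphism of this shape can exist. You discard it, so the argument is unaffected, but it should not appear as an asserted step.
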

\begin{proof}
	$(1)\Rightarrow (2)$ is mentioned in \cite[Lemma 3.9]{INP}. Indeed, a quasi-inverse $\sigma$ of $\tau$ gives a left ARS duality $\overline{\mathcal{C}}(A,B)\simeq \underline{\mathcal{C}}(\sigma A,\sigma B)\simeq \mathbb{D}\mathbb{E}(\sigma B,\tau \sigma A)\simeq \mathbb{D}\mathbb{E}(\sigma B,A)$. For $(2)\Rightarrow (1)$, suppose $(\sigma,\zeta)$ is a left ARS duality on $(\mathcal{C},\mathbb{E},\mathfrak{s})$. That is, an $R$-linear functor $\sigma: \overline{\mathcal{C}}\rightarrow \underline{\mathcal{C}}$ and a binatural isomorphism of $R$-modules $\zeta_{A,B}:\overline{\mathcal{C}}(A,B)\simeq \mathbb{D}\mathbb{E}(\sigma B,A)$ for any $A,B\in \mathcal{C}$. We show that $\sigma$ is a quasi-inverse of $\tau$. By Lemma \ref{equi def} and Ext-finiteness, we have $\overline{\mathcal{C}}(-,B)\simeq \mathbb{D}\mathbb{E}(\sigma B,-)\simeq \overline{\mathcal{C}}(-,\tau \sigma B)$. By Yoneda's lemma, the composition corresponds to a functorial isomorphism $B\simeq \tau \sigma B$ in $\overline{\mathcal{C}}$. Thus we obtain an natural isomorphism ${\rm id}_{\overline{\mathcal{C}}}\simeq \tau \sigma$. Similarly, we obtain ${\rm id}_{\underline{\mathcal{C}}}\simeq \sigma \tau$. Thus (1) follows.
\end{proof}

\section{The exactness of $\tau$}

In this section, we show that the functor $\tau$ in a right ARS duality $(\tau,\eta)$ is indeed an exact functor under relative structures. As in the previous section, let $R$ be a commutative artin ring and $(\mathcal{C},\mathbb{E},\mathfrak{s})$ be an $R$-linear Ext-finite extriangulated category. We need the following condition \cite[Condition 5.8]{NP}.

\begin{enumerate}
	\item [(\bf WIC)] For any morphisms $f\in \mathcal{C}(X,Y)$ and $g\in \mathcal{C}(Y,Z)$, if $g\circ f$ is an $\mathfrak{s}$-inflation, then so is $f$. Dually, if $g\circ f$ is an $\mathfrak{s}$-deflation, then so is $g$.
\end{enumerate}

It is interesting that this condition does not depend on the extriangulated structure on $\mathcal{C}$, but the additive structure on $\mathcal{C}$, which is the same as that of an exact category \cite[Proposition 7.6]{Buhler}.

\begin{lem}\cite[Proposition 2.7]{K}\label{WIC=w.i.c.}
	An extriangulated category satisfies condition (WIC) if and only if it is weakly idempotent complete.
\end{lem}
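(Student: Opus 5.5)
For $(\Rightarrow)$ I would use (WIC) to produce kernels of retractions directly. For $(\Leftarrow)$ I would reduce a general factorization $g\circ f$ to the special case of a morphism of the form $X\to Y\oplus Z$ with zero second component, where weak idempotent completeness can be brought to bear.

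\emph{(WIC) implies weak idempotent completeness.} Let $r\colon Y\to Z$ be a retraction with section $s$, so $rs={\rm id}_Z$. Since ${\rm id}_Z$ is an $\mathfrak{s}$-deflation (it is realized by $0\to Z\to Z$), (WIC) shows that $r$ is an $\mathfrak{s}$-deflation. Choose an $\mathfrak{s}$-triangle $K\xrightarrow{k}Y\xrightarrow{r}Z\stackrel{\delta}\dashrightarrow$. By Lemma \ref{exact seq} we have $\delta_\sharp({\rm id}_Z)=\delta_\sharp(rs)=0$, and $\delta_\sharp({\rm id}_Z)=\delta$, so $\delta=0$. Hence the triangle splits and $k$ is a section, in particular a monomorphism. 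The exact sequence $\mathcal{C}(-,K)\to\mathcal{C}(-,Y)\to\mathcal{C}(-,Z)$ of Lemma \ref{exact seq} says that $k$ is a weak kernel of $r$. Being monic, $k$ is then a kernel of $r$. The dual argument gives cokernels of sections.

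\emph{Weak idempotent completeness implies (WIC), first reduction.} Suppose $gf$ is an $\mathfrak{s}$-inflation with $f\colon X\to Y$ and $g\colon Y\to Z$, realized by $X\xrightarrow{gf}Z\to W\stackrel{\delta}\dashrightarrow$. Apply Lemma \ref{weak pushout} to this triangle and the morphism $f\colon X\to Y$. This gives an $\mathfrak{s}$-triangle
\[X\xrightarrow{\left(\begin{smallmatrix}-f\\ gf\end{smallmatrix}\right)}Y\oplus Z\to E\dashrightarrow.\]
Now compose with the automorphism $\left(\begin{smallmatrix}1&0\\ g&1\end{smallmatrix}\right)$ of $Y\oplus Z$. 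Inflations are stable under composing with isomorphisms, so $\left(\begin{smallmatrix}-f\\ 0\end{smallmatrix}\right)\colon X\to Y\oplus Z$ is an $\mathfrak{s}$-inflation. Hence so is $\left(\begin{smallmatrix}f\\ 0\end{smallmatrix}\right)$. It therefore suffices to show: \emph{if $\left(\begin{smallmatrix}f\\ 0\end{smallmatrix}\right)\colon X\to Y\oplus Z$ is an $\mathfrak{s}$-inflation, then $f$ is one.}

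\emph{The main step, which I expect to be the obstacle.} Write $X\xrightarrow{(f,0)^T}Y\oplus Z\xrightarrow{(p\ q)}C\stackrel{\epsilon}\dashrightarrow$. First, $q\colon Z\to C$ is a section. Indeed, $(0,{\rm id}_Z)\colon Y\oplus Z\to Z$ kills $(f,0)^T$, so by the cokernel-type exactness in Lemma \ref{exact seq} it factors as $u\circ(p\ q)$ for some $u\colon C\to Z$. Then $uq={\rm id}_Z$ and $up=0$.

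By weak idempotent completeness, the retraction $u$ has a kernel $j\colon K\to C$, and $C\cong K\oplus Z$ via $(j\ q)$. Since $up=0$, $p$ factors as $p=jp'$. Transporting the triangle along this isomorphism, the deflation becomes $\left(\begin{smallmatrix}p'&0\\ 0&1\end{smallmatrix}\right)\colon Y\oplus Z\to K\oplus Z$.

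It remains to split off the trivial summand $0\to Z\xrightarrow{1}Z$. I would do this by using additivity of $\mathfrak{s}$ (ET2) together with the fact that realizations are unique up to equivalence. The triangle should be equivalent to the direct sum of $X\xrightarrow{f}Y\xrightarrow{p'}K$, realizing $(j\oplus 0)^*\epsilon$ restricted to $K$, and the split triangle $0\to Z\to Z$. Concretely, I would pull back $\epsilon$ along the inclusion $K\to K\oplus Z$ and compare the realization via (ET3)$^{\rm op}$, checking that the middle morphism is the inclusion $Y\to Y\oplus Z$. This identification is the delicate part, and it would yield an $\mathfrak{s}$-triangle $X\xrightarrow{f}Y\xrightarrow{p'}K\dashrightarrow$.

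The statement for deflations follows dually.
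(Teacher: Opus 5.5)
The paper gives no proof of this lemma; it quotes \cite[Proposition 2.7]{K}. So your proposal has to stand on its own, and most of it does. The direction (WIC) $\Rightarrow$ weakly idempotent complete is complete as written. In the converse, the following steps are all correct:
\begin{itemize}
\item the reduction to the inflation $\left(\begin{smallmatrix}f\\0\end{smallmatrix}\right)$ via Lemma \ref{weak pushout} and the automorphism $\left(\begin{smallmatrix}1&0\\g&1\end{smallmatrix}\right)$;
\item the proof that $q$ is a section;
\item the change of coordinates $(j\ q)\colon K\oplus Z\xrightarrow{\sim}C$, which turns the deflation into $\left(\begin{smallmatrix}p'&0\\0&1\end{smallmatrix}\right)$.
\end{itemize}

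The final step, however, is not yet an argument. You describe the triangle as the direct sum of $X\xrightarrow{f}Y\xrightarrow{p'}K$ and $0\to Z\xrightarrow{1}Z$. That presupposes $X\xrightarrow{f}Y\xrightarrow{p'}K$ is a realization, which is exactly the conclusion.

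A comparison via (ET3)$^{\rm op}$ does not close this by itself either. Take a realization $X\xrightarrow{x''}B''\xrightarrow{y''}K$ of $j^*\epsilon$. The comparison only produces some $b_1\colon B''\to Y$ with $b_1x''=f$ and $p'b_1=y''$. You cannot use the two-out-of-three property for morphisms of $\mathfrak{s}$-triangles to make $b_1$ invertible, because the target row is not yet known to be an $\mathfrak{s}$-triangle.

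The tools you name, additivity in (ET2) and uniqueness of realizations, do finish it:
\begin{enumerate}
\item Put $\epsilon'=(j\ q)^*\epsilon$. Then $X\xrightarrow{\left(\begin{smallmatrix}f\\0\end{smallmatrix}\right)}Y\oplus Z\xrightarrow{\left(\begin{smallmatrix}p'&0\\0&1\end{smallmatrix}\right)}K\oplus Z$ realizes $\epsilon'$. Let $\iota_K,\iota_Z$ be the inclusions into $K\oplus Z$.
\item The morphism $\iota_Z$ factors through this deflation, so Lemma \ref{exact seq} gives $\iota_Z^*\epsilon'=0$. Hence, identifying $X\oplus 0=X$, we get $\epsilon'=\epsilon''\oplus 0$ with $\epsilon''=\iota_K^*\epsilon'=j^*\epsilon$ and $0\in\mathbb{E}(Z,0)$.
\item Realize $\epsilon''$ by $X\xrightarrow{x''}B''\xrightarrow{y''}K$. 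By additivity of $\mathfrak{s}$, the sequence
\[X\xrightarrow{\left(\begin{smallmatrix}x''\\0\end{smallmatrix}\right)}B''\oplus Z\xrightarrow{\left(\begin{smallmatrix}y''&0\\0&1\end{smallmatrix}\right)}K\oplus Z\]
also realizes $\epsilon'$.
\item By uniqueness of realizations there is an isomorphism $\theta=(\theta_{ij})\colon Y\oplus Z\to B''\oplus Z$ with $\theta\left(\begin{smallmatrix}f\\0\end{smallmatrix}\right)=\left(\begin{smallmatrix}x''\\0\end{smallmatrix}\right)$ and $\left(\begin{smallmatrix}y''&0\\0&1\end{smallmatrix}\right)\theta=\left(\begin{smallmatrix}p'&0\\0&1\end{smallmatrix}\right)$.
\item The second equation forces $\theta_{21}=0$ and $\theta_{22}=1$. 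Then $\theta^{-1}$ has the same shape, so $\theta_{11}$ is an isomorphism, with $\theta_{11}f=x''$ and $y''\theta_{11}=p'$.
\end{enumerate}
Thus $X\xrightarrow{f}Y\xrightarrow{p'}K$ is equivalent to a realization of $\epsilon''$, and $f$ is an $\mathfrak{s}$-inflation.

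A small notational point: the extension you wrote as ``$(j\oplus 0)^*\epsilon$ restricted to $K$'' is simply $j^*\epsilon$.
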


\begin{thm}\label{main thm2}
	Suppose $(\mathcal{C},\mathbb{E},\mathfrak{s})$ is weakly idempotent complete, has a right ARS duality $(\tau,\eta)$ and enough projectives $\mathcal{Q}$ and enough injectives $\mathcal{J}$ which are both functorially finite. Define
	\[(\underline{\mathcal{C}},\underline{\mathbb{E}^{\mathcal{Q}}},\underline{\mathfrak{s}|_{\mathbb{E}^{\mathcal{Q}}}})\,(\text{resp. }(\overline{\mathcal{C}},\overline{\mathbb{E}_{\mathcal{J}}},\overline{\mathfrak{s}|_{\mathbb{E}_{\mathcal{J}}}}))\]
	to be the ideal quotient of $(\mathcal{C},\mathbb{E}^{\mathcal{Q}},\mathfrak{s}|_{\mathbb{E}^{\mathcal{Q}}})\,(\text{resp. } (\mathcal{C},\mathbb{E}_{\mathcal{J}},\mathfrak{s}|_{\mathbb{E}_{\mathcal{J}}}))$ by $\mathcal{Q}\,(\text{resp. } \mathcal{J})$. Then there exists a natural isomorphism $\phi:\underline{\mathbb{E}^{\mathcal{Q}}}\xRightarrow{\sim} \overline{\mathbb{E}_{\mathcal{J}}}\circ (\tau^{\rm op}\times \tau)$ such that
	\[(\tau,\phi):(\underline{\mathcal{C}},\underline{\mathbb{E}^{\mathcal{Q}}},\underline{\mathfrak{s}|_{\mathbb{E}^{\mathcal{Q}}}})\rightarrow (\overline{\mathcal{C}},\overline{\mathbb{E}_{\mathcal{J}}},\overline{\mathfrak{s}|_{\mathbb{E}_{\mathcal{J}}}})\]
	is an exact functor which is fully faithful.
\end{thm}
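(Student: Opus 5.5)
First I will check that the two ideal quotients are well defined. In $(\mathcal{C},\mathbb{E}^{\mathcal{Q}},\mathfrak{s}|_{\mathbb{E}^{\mathcal{Q}}})$ the objects of $\mathcal{Q}$ are projective, since they are already $\mathbb{E}$-projective. They are also injective, because by Example \ref{typical exam of subfunctor} the functor $\mathbb{E}^{\mathcal{Q}}$ is the largest closed subfunctor making $\mathcal{Q}$ injective. The dual statement holds for $\mathcal{J}$ in $\mathbb{E}_{\mathcal{J}}$. Lemma \ref{induced extri str on ideal quotient} therefore applies, and since we have enough projectives and injectives, the quotients are exactly $\underline{\mathcal{C}}$ and $\overline{\mathcal{C}}$. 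Full faithfulness of $\tau$ is already known from \cite[Remark 3.5]{INP}. The real content is constructing $\phi$ and checking that realizations are preserved.

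To build $\phi$, I would use a Van den Bergh style argument on realizations. Take $\delta\in\mathbb{E}^{\mathcal{Q}}(C,A)$ with $\mathfrak{s}(\delta)=[A\xrightarrow{x}B\xrightarrow{y}C]$. Since $(\delta^\sharp)_Q=0$ for all $Q\in\mathcal{Q}$, every morphism $A\to Q$ factors through $x$.

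Choose a deflation $p:P\to C$ from a projective, and an $\mathfrak{s}$-triangle $\Omega C\to P\to C\dashrightarrow$. The class $\delta$ is then the pushout of this triangle along some $\Omega C\to A$. The natural transformations $\mathcal{C}(-,\tau\,-)$ obtained from $\eta$ should turn the exact sequences of Lemma \ref{exact seq} for $\mathcal{C}(-,?)$ into exact sequences of $\mathbb{D}\mathbb{E}(?,\tau-)$ and of $\overline{\mathcal{C}}(-,\tau-)$. The plan is to show that the image sequence
\[\tau A\xrightarrow{\tau x}\tau B\xrightarrow{\tau y}\tau C\]
(chosen via lifts in $\mathcal{C}$) is, after adding injective summands, an $\mathfrak{s}$-triangle whose class lies in $\mathbb{E}_{\mathcal{J}}$.

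Concretely, I would proceed in three steps:
\begin{enumerate}
\item Apply $\mathbb{D}$ of Lemma \ref{equi def} to the long exact sequence $\mathbb{E}(C,-)\to\mathbb{E}(B,-)\to\mathbb{E}(A,-)$. Because $\delta\in\mathbb{E}^{\mathcal{Q}}$, the sequence $\underline{\mathcal{C}}(-,A)\to\underline{\mathcal{C}}(-,B)\to\underline{\mathcal{C}}(-,C)$ is still exact at the relevant places. This yields exactness of $\overline{\mathcal{C}}(-,\tau A)\to\overline{\mathcal{C}}(-,\tau B)\to\overline{\mathcal{C}}(-,\tau C)$.
\item Take a left $\mathcal{J}$-approximation $\tau A\to J$, which exists because $\mathcal{J}$ is functorially finite; it is an inflation since there are enough injectives. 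Using lifts of $\tau x$, form the morphism $\tau A\to \tau B\oplus J$. Show it is an $\mathfrak{s}$-inflation with cone $\tau C'$ isomorphic to $\tau C$ in $\overline{\mathcal{C}}$. Here (WIC), via Lemma \ref{WIC=w.i.c.}, is needed to cancel summands and to deduce the inflation property from a composite.
\item Define $\phi_{C,A}(\delta)$ to be the class of this triangle in $\overline{\mathbb{E}_{\mathcal{J}}}(\tau C,\tau A)$. Check well-definedness and naturality using (ET3) together with the uniqueness statement of Proposition \ref{uniqueness of right ARS}. Check that the triangle lies in $\mathbb{E}_{\mathcal{J}}$, i.e.\ $\mathcal{C}(J,\tau B\oplus J)\to\mathcal{C}(J,\tau C)$ is surjective, by the dual exactness computation.
\end{enumerate}
Finally, I would prove that $\phi$ is an isomorphism by constructing its inverse symmetrically. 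For this, use the equivalent pairing of Lemma \ref{equi def} to identify both sides with $\mathbb{D}$ of appropriate Hom-spaces between $\underline{\mathcal{C}}$ and $\overline{\mathcal{C}}$, and use full faithfulness of $\tau$ to match them.

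The main obstacle will be step 2: showing that the lifted sequence really is an $\mathfrak{s}$-triangle, and not merely a sequence that is exact on Hom functors. I would try first to realize $\tau y$, after adding $J$, as an $\mathfrak{s}$-deflation. This uses a right $\mathcal{J}$-approximation-free argument: take a deflation onto $\tau C$ built from an injective resolution, and compare it through the exact $\overline{\mathcal{C}}$-sequences with the ARS isomorphism $\eta$, following \cite[Theorem A.4.4]{Bo}.
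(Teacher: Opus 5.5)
Your proposal has a genuine gap, and it sits exactly at the step you flag as the main obstacle. You try to \emph{define} $\phi_{C,A}(\delta)$ as the class of a triangle $\tau A\to\tau B\oplus J\to C''$ built in $\mathcal{C}$, so everything rests on step 2, which you never carry out.

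A comparison map $C''\to\tau C$ is easy to produce, because $\tau y\circ\tau x\in[\mathcal{J}]$ factors through the approximation $j$. What is missing is any way to show that this map is an isomorphism in $\overline{\mathcal{C}}$. The three-term exactness of $\overline{\mathcal{C}}(-,\tau A)\to\overline{\mathcal{C}}(-,\tau B)\to\overline{\mathcal{C}}(-,\tau C)$ in your step 1 is too short for this. (It holds for every $\mathfrak{s}$-triangle by dualizing the $\mathbb{E}$-sequence of Lemma \ref{exact seq}; the hypothesis $\delta\in\mathbb{E}^{\mathcal{Q}}$ plays no role there.) Extending it by a connecting map into $\overline{\mathbb{E}_{\mathcal{J}}}(-,\tau A)$ presupposes the very class $\phi(\delta)$ you are trying to construct, so the argument is circular.

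Even granting step 2, several further points are left open:
\begin{itemize}
\item You do not address well-definedness, additivity or naturality of a class that depends on choices of lifts and approximations. Proposition \ref{uniqueness of right ARS} is about uniqueness of $\tau$, not of extension classes.
\item Proving bijectivity by ``constructing the inverse symmetrically'' is unavailable. Here $\tau$ is only fully faithful, and the hypotheses give no left ARS duality and no $\tau^{-1}$.
\item You use functorial finiteness only to obtain left $\mathcal{J}$-approximations. Those exist automatically from enough injectives.
\end{itemize}

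The missing idea is to define $\phi$ intrinsically first, and only then check realizations.

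\textbf{Defining $\phi$.} Covariant finiteness of $\mathcal{Q}$ and contravariant finiteness of $\mathcal{J}$, via Lemma \ref{weak pushout}, show the following.
\begin{itemize}
\item $(\mathcal{C},\mathbb{E}^{\mathcal{Q}},\mathfrak{s}|_{\mathbb{E}^{\mathcal{Q}}})$ has enough injectives $\mathrm{add}(\mathcal{Q}\cup\mathcal{J})$.
\item $(\mathcal{C},\mathbb{E}_{\mathcal{J}},\mathfrak{s}|_{\mathbb{E}_{\mathcal{J}}})$ has enough projectives $\mathrm{add}(\mathcal{Q}\cup\mathcal{J})$.
\item Consequently $\mathcal{I}_{\mathbb{E}^{\mathcal{Q}}}=\mathcal{P}_{\mathbb{E}_{\mathcal{J}}}$.
\end{itemize}
Theorem \ref{main thm1} and Lemma \ref{equi def} then give natural isomorphisms
$\mathbb{E}^{\mathcal{Q}}(C,A)\simeq\mathbb{D}(\mathcal{C}/\mathcal{I}_{\mathbb{E}^{\mathcal{Q}}})(A,\tau C)=\mathbb{D}(\mathcal{C}/\mathcal{P}_{\mathbb{E}_{\mathcal{J}}})(A,\tau C)\simeq\mathbb{E}_{\mathcal{J}}(\tau C,\tau A)$.
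Thus $\phi$ is characterized by $\eta_{A}([f]^{*}\phi(\delta))=\eta_{C}([f]_{*}\delta)$, and it is a natural isomorphism for free.

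\textbf{Checking realizations.} This is done in $\overline{\mathcal{C}}$, in four steps.
\begin{itemize}
\item Since $\delta\in\mathbb{E}^{\mathcal{Q}}$, one gets a long exact sequence running through $\overline{\mathcal{C}}(-,\tau C)\to\overline{\mathbb{E}_{\mathcal{J}}}(-,\tau A)$. Together with (WIC), it shows $\tau(\underline{y})$ is a deflation, with triangle $A'\xrightarrow{\overline{\alpha}}\tau B\xrightarrow{\tau(\underline{y})}\tau C\stackrel{\beta}{\dashrightarrow}$.
\item One produces $\overline{h}:A'\to\tau A$ with $\tau(\underline{x})\overline{h}=\overline{\alpha}$ and $\overline{h}_{*}\beta=\phi(\delta)$. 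This is done by extending a functional to $\mathbb{E}(A,A')$ using injectivity of $J$ in $\mathrm{mod}R$; this is the Van den Bergh trick.
\item The compatibility needed for that extension is verified with (ET4)$^{\rm op}$.
\item The five lemma on the sequences $\overline{\mathcal{C}}(?,\tau(-))$, which uses full faithfulness of $\tau$, combined with \cite[Theorem 4.1]{INP}, shows $\overline{h}$ is an isomorphism.
\end{itemize}

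Your sketch gestures at \cite{Bo} but contains none of these ingredients.
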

\begin{proof}
	By definition, we have $\mathcal{P}_{\mathbb{E}}=[\mathcal{Q}]$ (resp. $\mathcal{I}_{\mathbb{E}}=[\mathcal{J}]$) and therefore $\underline{\mathcal{C}}=\mathcal{C}/[\mathcal{Q}]$ (resp. $\overline{\mathcal{C}}=\mathcal{C}/[\mathcal{J}]$). By Example \ref{typical exam of subfunctor}, objects in $\mathcal{Q}$ (resp. $\mathcal{J}$) are projective-injective in the relative theory $(\mathcal{C},\mathbb{E}^{\mathcal{Q}},\mathfrak{s}|_{\mathbb{E}^{\mathcal{Q}}})$ (resp. $(\mathcal{C},\mathbb{E}_{\mathcal{J}},\mathfrak{s}|_{\mathbb{E}_{\mathcal{J}}})$). Thus the ideal quotient $(\underline{\mathcal{C}},\underline{\mathbb{E}^{\mathcal{Q}}},\underline{\mathfrak{s}|_{\mathbb{E}^{\mathcal{Q}}}})$ and $(\overline{\mathcal{C}},\overline{\mathbb{E}_{\mathcal{J}}},\overline{\mathfrak{s}|_{\mathbb{E}_{\mathcal{J}}}})$ are extriangulated categories by Lemma \ref{induced extri str on ideal quotient}. Let $\eta_{A}:=\eta_{A,A}(\underline{{\rm id}_{A}})$ be as usual.
	
	(1) We firstly define a natural transformation $\phi:\underline{\mathbb{E}^{\mathcal{Q}}}\Rightarrow \overline{\mathbb{E}_{\mathcal{J}}}\circ (\tau^{\rm op}\times \tau)$. By Theorem \ref{main thm1}, right ARS duality $(\tau,\eta)$ on $(\mathcal{C},\mathbb{E},\mathfrak{s})$ induces right ARS dualities on $(\mathcal{C},\mathbb{E}^{\mathcal{Q}},\mathfrak{s}|_{\mathbb{E}^{\mathcal{Q}}})$ and $(\mathcal{C},\mathbb{E}_{\mathcal{J}},\mathfrak{s}|_{\mathbb{E}_{\mathcal{J}}})$. Indeed, we have
	\[\tau_{\mathbb{E}^{\mathcal{Q}}}:\mathcal{C}/\mathcal{P}_{\mathbb{E}^{\mathcal{Q}}}\rightarrow \mathcal{C}/\mathcal{I}_{\mathbb{E}^{\mathcal{Q}}},~\eta^{\mathbb{E}^{\mathcal{Q}}}_{A,B}:\mathcal{C}/\mathcal{P}_{\mathbb{E}^{\mathcal{Q}}}(A,B)\simeq \mathbb{D}\mathbb{E}^{\mathcal{Q}}(B,\tau_{\mathbb{E}^{\mathcal{Q}}}A)\]
	and
	\[\tau_{\mathbb{E}_{\mathcal{J}}}:\mathcal{C}/\mathcal{P}_{\mathbb{E}_{\mathcal{J}}}\rightarrow \mathcal{C}/\mathcal{I}_{\mathbb{E}_{\mathcal{J}}},~\eta^{\mathbb{E}_{\mathcal{J}}}_{A,B}:\mathcal{C}/\mathcal{P}_{\mathbb{E}_{\mathcal{J}}}(A,B)\simeq \mathbb{D}\mathbb{E}_{\mathcal{J}}(B,\tau_{\mathbb{E}_{\mathcal{J}}}A)\]
	for any $A,B\in \mathcal{C}$. We claim that $\mathcal{I}_{\mathbb{E}^{\mathcal{Q}}}=\mathcal{P}_{\mathbb{E}_{\mathcal{J}}}=[{\rm add}(\mathcal{Q}\cup \mathcal{J})]$. We only show $\mathcal{I}_{\mathbb{E}^{\mathcal{Q}}}=[{\rm add}(\mathcal{Q}\cup \mathcal{J})]$, the other one follows dually. For any $X\in \mathcal{C}$, there exists an $\mathfrak{s}$-triangle $X\rightarrow I_{X}\rightarrow Y\dashrightarrow $ satisfying $I_{X}\in \mathcal{J}$. Since $\mathcal{Q}$ is covariantly finite, there exists a left $\mathcal{Q}$-approximation $X\rightarrow Q$ of $X$. By Lemma \ref{weak pushout}, we obtain an $\mathfrak{s}$-triangle $X\rightarrow Q\oplus I_{X}\rightarrow X'\dashrightarrow$. It is an $\mathfrak{s}|_{\mathbb{E}^{\mathcal{Q}}}$-triangle and $Q\oplus I_{X}$ is injective in $(\mathcal{C},\mathbb{E}^{\mathcal{Q}},\mathfrak{s}|_{\mathbb{E}^{\mathcal{Q}}})$. Then $(\mathcal{C},\mathbb{E}^{\mathcal{Q}},\mathfrak{s}|_{\mathbb{E}^{\mathcal{Q}}})$ has enough injectives ${\rm add}(\mathcal{Q}\cup \mathcal{J})$ and therefore $\mathcal{I}_{\mathbb{E}^{\mathcal{Q}}}=[{\rm add}(\mathcal{Q}\cup \mathcal{J})]$. Thus we have
	\begin{align*}
		\underline{\mathbb{E}^{\mathcal{Q}}}(C,A)&=\mathbb{E}^{\mathcal{Q}}(C,A) &&\text{by Definition}\\
		&\simeq \mathbb{D}(\mathcal{C}/\mathcal{I}_{\mathbb{E}^{\mathcal{Q}}})(A,\tau_{\mathbb{E}^{\mathcal{Q}}}C) &&\text{by Lemma \ref{equi def}}\\
		&=\mathbb{D}(\mathcal{C}/\mathcal{P}_{\mathbb{E}_{\mathcal{J}}})(A,\tau_{\mathbb{E}_{\mathcal{J}}}C) &&\text{by }\mathcal{I}_{\mathbb{E}^{\mathcal{Q}}}=\mathcal{P}_{\mathbb{E}_{\mathcal{J}}} \text{ and Theorem \ref{main thm1}}\\
		&\simeq \mathbb{E}_{\mathcal{J}}(\tau_{\mathbb{E}_{\mathcal{J}}}C,\tau_{\mathbb{E}_{\mathcal{J}}}A) &&\text{by Ext-finiteness}\\
		&=\overline{\mathbb{E}_{\mathcal{J}}}(\tau C,\tau A) &&\text{by Theorem \ref{main thm1}}
	\end{align*}
	We denote this isomorphism by $\phi_{C,A}$. Clearly, it is a natural isomorphism.
	
	(2) Let $\delta \in \underline{\mathbb{E}^{\mathcal{Q}}}(C,A)=\mathbb{E}^{\mathcal{Q}}(C,A)\subseteq \mathbb{E}(C,A)$, take an $\mathfrak{s}|_{\mathbb{E}^{\mathcal{Q}}}$-triangle $A\xrightarrow{a} B\xrightarrow{b} C\stackrel{\delta}\dashrightarrow $. Then $A\xrightarrow{\underline{a}} B\xrightarrow{\underline{b}} C\stackrel{\delta}\dashrightarrow $ is an $\underline{\mathfrak{s}|_{\mathbb{E}^{\mathcal{Q}}}}$-triangle in $(\underline{\mathcal{C}},\underline{\mathbb{E}^{\mathcal{Q}}},\underline{\mathfrak{s}|_{\mathbb{E}^{\mathcal{Q}}}})$. It suffices to show
	\begin{equation}\label{triangle}
		\begin{tikzcd}
			\tau A \arrow[r,"\tau(\underline{a})"] & \tau B \arrow[r,"\tau(\underline{b})"] & \tau C \arrow[r,"\phi_{C,A}(\delta)",dashed] & {}
		\end{tikzcd}
	\end{equation}
	is an $\overline{\mathfrak{s}|_{\mathbb{E}_{\mathcal{J}}}}$-triangle in $(\overline{\mathcal{C}},\overline{\mathbb{E}_{\mathcal{J}}},\overline{\mathfrak{s}|_{\mathbb{E}_{\mathcal{J}}}})$. But firstly, we need the following claim.
	
	\begin{claim}\label{cliam}
		There are two exact sequences
		\begin{enumerate}
			\item [(i)] $\overline{\mathcal{C}}(-,\tau A)\xrightarrow{\overline{\mathcal{C}}(-,\tau(\underline{a}))} \overline{\mathcal{C}}(-,\tau B)\xrightarrow{\overline{\mathcal{C}}(-,\tau(\underline{b}))} \overline{\mathcal{C}}(-,\tau C)\xrightarrow{{\phi_{C,A}(\delta)}_{\sharp(-)}} \overline{\mathbb{E}_{\mathcal{J}}}(-,\tau A)\\
			\xrightarrow{\tau(\underline{a})_{*(-)}} \overline{\mathbb{E}_{\mathcal{J}}}(-,\tau B) \xrightarrow{\tau(\underline{b})_{*(-)}} \overline{\mathbb{E}_{\mathcal{J}}}(-,\tau C) \text{ is exact in } {\rm Mod}\,\overline{\mathcal{C}}.$
			\item [(ii)] $\overline{\mathcal{C}}(\tau C,\tau(-)) \xrightarrow{\overline{\mathcal{C}}(\tau(\underline{b}),\tau(-))} \overline{\mathcal{C}}(\tau B,\tau(-))\xrightarrow{\overline{\mathcal{C}}(\tau(\underline{a}),\tau(-))} \overline{\mathcal{C}}(\tau A,\tau(-))\xrightarrow{{\phi_{C,A}(\delta)}^{\sharp}_{\tau(-)}} \overline{\mathbb{E}_{\mathcal{J}}}(\tau C,\tau(-))\\
			\xrightarrow{\tau(\underline{b})^{*}_{\tau(-)}} \overline{\mathbb{E}_{\mathcal{J}}}(\tau B,\tau(-))\xrightarrow{\tau(\underline{a})^{*}_{\tau(-)}} \overline{\mathbb{E}_{\mathcal{J}}}(\tau A,\tau(-)) \text{ is exact in } {\rm Mod}\,\underline{\mathcal{C}}^{\rm op}$.
		\end{enumerate}
	\end{claim}
	\begin{proof}[Proof of Claim\ref{cliam}]
		(i) By Ext-finiteness of $\mathcal{C}$ and functorial isomorphisms $\overline{\mathcal{C}}(-,\tau A)\simeq \mathbb{D}\mathbb{E}(A,-)$ and $\overline{\mathbb{E}_{\mathcal{J}}}(-,\tau A)\simeq \mathbb{D}(\mathcal{C}/\mathcal{P}_{\mathbb{E}_{\mathcal{J}}})(A,-)$, it suffices to show the exactness of the following sequence
		\begin{equation}\label{exact seq3}
			\begin{split}
				\mathcal{C}/\mathcal{P}_{\mathbb{E}_{\mathcal{J}}}(C,-) & \xrightarrow{\mathcal{C}/\mathcal{P}_{\mathbb{E}_{\mathcal{J}}}([b],-)} \mathcal{C}/\mathcal{P}_{\mathbb{E}_{\mathcal{J}}}(B,-)\xrightarrow{\mathcal{C}/\mathcal{P}_{\mathbb{E}_{\mathcal{J}}}([a],-)} \mathcal{C}/\mathcal{P}_{\mathbb{E}_{\mathcal{J}}}(A,-)\xrightarrow{\delta^{\sharp}_{(-)}} \mathbb{E}(C,-)\\
				& \xrightarrow{\underline{b}^{*}_{(-)}} \mathbb{E}(B,-)\xrightarrow{\underline{a}^{*}_{(-)}} \mathbb{E}(A,-)
			\end{split}
		\end{equation}
		and the commutative diagram
		\[\begin{tikzcd}
			\overline{\mathcal{C}}(-,\tau C) \arrow[r,"{\phi_{C,A}(\delta)}_{\sharp(-)}"] \arrow[d,"\simeq"] & \overline{\mathbb{E}_{\mathcal{J}}}(-,\tau A) \arrow[d,"\simeq"] \\
			\mathbb{D}\mathbb{E}(C,-) \arrow[r,"\mathbb{D}(\delta^{\sharp}_{(-)})"] & \mathbb{D}(\mathcal{C}/\mathcal{P}_{\mathbb{E}_{\mathcal{J}}})(A,-).
		\end{tikzcd}\]
		By Lemma \ref{exact seq} and \ref{(co)stable cat}, there is an exact sequence
		\[\mathcal{C}(C,-)\xrightarrow{\mathcal{C}(b,-)} \mathcal{C}(B,-)\xrightarrow{\mathcal{C}(a,-)} \mathcal{C}(A,-)\xrightarrow{\delta^{\sharp}_{(-)}} \mathbb{E}(C,-)\xrightarrow{\underline{b}^{*}_{(-)}} \mathbb{E}(B,-)\xrightarrow{\underline{a}^{*}_{(-)}} \mathbb{E}(A,-).\]
		Since $\mathcal{I}_{\mathbb{E}^{\mathcal{Q}}}=\mathcal{P}_{\mathbb{E}_{\mathcal{J}}}=[{\rm add}(\mathcal{Q}\cup \mathcal{J})]$ and $\delta \in \mathbb{E}^{\mathcal{Q}}(C,A)$, by the dual of \cite[Lemma 1.26]{INP}, the exactness of (\ref{exact seq3}) follows. To prove commutativity, by Yoneda's lemma, it suffices to show $\eta_{C}([f]_{*}\delta)=\eta_{A}([f]^{*}\phi_{C,A}(\delta))$ for any $[f]\in \mathcal{C}/\mathcal{P}_{\mathbb{E}_{\mathcal{J}}}(A,\tau C)$. This follows immediately from the definition of $\phi_{C,A}$.
		
		(ii) Since $A\xrightarrow{\underline{a}} B\xrightarrow{\underline{b}} C\stackrel{\delta}\dashrightarrow $ is an $\underline{\mathfrak{s}|_{\mathbb{E}^{\mathcal{Q}}}}$-triangle in $(\underline{\mathcal{C}},\underline{\mathbb{E}^{\mathcal{Q}}},\underline{\mathfrak{s}|_{\mathbb{E}^{\mathcal{Q}}}})$, by Lemma \ref{exact seq}, we obtain an exact sequence
		\[\underline{\mathcal{C}}(C,-)\xrightarrow{\underline{\mathcal{C}}(\underline{b},-)} \underline{\mathcal{C}}(B,-)\xrightarrow{\underline{\mathcal{C}}(\underline{a},-)} \underline{\mathcal{C}}(A,-)\xrightarrow{\delta^{\sharp}} \underline{\mathbb{E}^{\mathcal{Q}}}(C,-)\xrightarrow{\underline{b}^{*}} \underline{\mathbb{E}^{\mathcal{Q}}}(B,-)\xrightarrow{\underline{a}^{*}} \underline{\mathbb{E}^{\mathcal{Q}}}(A,-).\]
		By fully faithfulness of $\tau$ and natural isomorphism $\underline{\mathbb{E}^{\mathcal{Q}}}(C,A)\simeq \overline{\mathbb{E}_{\mathcal{J}}}(\tau C,\tau A)$, we obtain the required exact sequence in ${\rm Mod}\,\underline{\mathcal{C}}^{\rm op}$.
	\end{proof}
	(3) Suppose $\phi_{C,A}(\delta)$ is realized by $\tau A\xrightarrow{\overline{a'}} B'\xrightarrow{\overline{b'}} \tau C$ in $(\overline{\mathcal{C}},\overline{\mathbb{E}_{\mathcal{J}}},\overline{\mathfrak{s}|_{\mathbb{E}_{\mathcal{J}}}})$, then $\overline{b'}^{*}\phi_{C,A}(\delta)=0$ by Lemma \ref{exact seq}. By Claim \ref{cliam} (i), there exists $\overline{c'}\in \overline{\mathcal{C}}(B',\tau B)$ satisfying $\tau(\underline{b})\circ \overline{c'}=\overline{b'}$. Since $\mathcal{C}$ is weakly idempotent complete, so is $\overline{\mathcal{C}}$. By Lemma \ref{WIC=w.i.c.}, $\tau(\underline{b})$ is an $\overline{\mathfrak{s}|_{\mathbb{E}_{\mathcal{J}}}}$-deflation. Suppose there is an $\overline{\mathfrak{s}|_{\mathbb{E}_{\mathcal{J}}}}$-triangle $A'\xrightarrow{\overline{\alpha}} \tau B\xrightarrow{\tau(\underline{b})} \tau C\stackrel{\beta}\dashrightarrow $ with $\overline{\alpha}\in \overline{\mathcal{C}}(A',\tau B) $ and $\beta \in \overline{\mathbb{E}_{\mathcal{J}}}(\tau C,A')$. In this part, we show that if there exists $\overline{h}\in \overline{\mathcal{C}}(A',\tau A)$ such that $\overline{h}_{*}\beta=\phi_{C,A}(\delta)$ and the following diagram commutes, then the proof is complete.
	\[\begin{tikzcd}
		A' \arrow[r,"\overline{\alpha}"] \arrow[d,"\overline{h}",dashed] & \tau B \arrow[r,"\tau(\underline{b})"] \arrow[d,equal] & \tau C \arrow[r,"\beta",dashed] \arrow[d,equal] & {} \\
		\tau A \arrow[r,"\tau(\underline{a})"] & \tau B \arrow[r,"\tau(\underline{b})"] & \tau C & {}
	\end{tikzcd}\]
	Indeed, by Claim \ref{cliam} (ii), we obtain the following commutative diagram in ${\rm Mod}\,\underline{\mathcal{C}}^{\rm op}$
	\[\begin{tikzcd}
		\overline{\mathcal{C}}(\tau C,\tau(-)) \arrow[r] \arrow[d,equal] & \overline{\mathcal{C}}(\tau B,\tau(-)) \arrow[r] \arrow[d,equal] & \overline{\mathcal{C}}(\tau A,\tau(-)) \arrow[r] \arrow[d] & \overline{\mathbb{E}_{\mathcal{J}}}(\tau C,\tau(-)) \arrow[r] \arrow[d,equal] & \overline{\mathbb{E}_{\mathcal{J}}}(\tau B,\tau(-)) \arrow[d,equal] \\
		\overline{\mathcal{C}}(\tau C,\tau(-)) \arrow[r] & \overline{\mathcal{C}}(\tau B,\tau(-)) \arrow[r] & \overline{\mathcal{C}}(A',\tau(-)) \arrow[r] & \overline{\mathbb{E}_{\mathcal{J}}}(\tau C,\tau(-)) \arrow[r] & \overline{\mathbb{E}_{\mathcal{J}}}(\tau B,\tau(-)).
	\end{tikzcd}\]
	The third square commutes because of the equality $\overline{h}_{*}\beta=\phi_{C,A}(\delta)$. By the five lemma, we obtain isomorphism $\overline{\mathcal{C}}(\tau A,\tau(-))\simeq \overline{\mathcal{C}}(A',\tau(-))$. Then we have $\mathbb{E}(-,\tau A)\simeq \mathbb{E}(-,A')$. By \cite[Theorem 4.1]{INP}, the morphism $\overline{h}: A'\rightarrow \tau A$ is an isomorphism in $\overline{\mathcal{C}}$. Thus by \cite[Proposition 3.7]{NP}, (\ref{triangle}) is an $\overline{\mathfrak{s}|_{\mathbb{E}_{\mathcal{J}}}}$-triangle in $(\overline{\mathcal{C}},\overline{\mathbb{E}_{\mathcal{J}}},\overline{\mathfrak{s}|_{\mathbb{E}_{\mathcal{J}}}})$.
	
	(4) Suppose there exists $\overline{h}\in \overline{\mathcal{C}}(A',\tau A)$ satisfying $\tau(\underline{a})\circ \overline{h}=\overline{\alpha}$. Since $\overline{\mathcal{C}}(A',\tau B)\simeq \mathbb{D}\mathbb{E}(B,A')$, then for any $\rho\in \mathbb{E}(B,A')$, we have $\eta_{B}((\tau(\underline{a})\circ \overline{h})_{*}\rho)=\eta_{B}(\overline{\alpha}_{*}\rho)$. We also have $\eta_{B}((\tau(\underline{a})\circ \overline{h})_{*}\rho)=\eta_{B}(\tau(\underline{a})_{*}\overline{h}_{*}\rho)=\eta_{A}(\underline{a}^{*}\overline{h}_{*}\rho)$. Thus we obtain
	\begin{equation}\label{equi condition1}
		\eta_{A}(\underline{a}^{*}\overline{h}_{*}\rho)=\eta_{B}(\overline{\alpha}_{*}\rho) \text{ for any } \rho \in \mathbb{E}(B,A').
	\end{equation}
	Conversely, if (\ref{equi condition1}) holds, then clearly $\tau(\underline{a})\circ \overline{h}=\overline{\alpha}$.
	
	Suppose there exists $\overline{h}\in \overline{\mathcal{C}}(A',\tau A)$ satisfying $\overline{h}_{*}\beta=\phi_{C,A}(\delta)\in \overline{\mathbb{E}_{\mathcal{J}}}(\tau C,\tau A)$. Since $\overline{\mathbb{E}_{\mathcal{J}}}(\tau C,\tau A)\simeq \mathbb{D}(\mathcal{C}/\mathcal{P}_{\mathbb{E}_{\mathcal{J}}})(A,\tau C)$, then for any $[f]\in \mathcal{C}/\mathcal{P}_{\mathbb{E}_{\mathcal{J}}}(A,\tau C)$, we have $\eta_{A}([f]^{*}\overline{h}_{*}\beta)=\eta_{A}([f]^{*}\phi_{C,A}(\delta))$. By the proof of Claim \ref{cliam} (i), we also have $\eta_{A}([f]^{*}\phi_{C,A}(\delta))=\eta_{C}([f]_{*}\delta)$. Thus we obtain
	\begin{equation}\label{equi condition2}
		\eta_{A}([f]^{*}\overline{h}_{*}\beta)=\eta_{C}([f]_{*}\delta) \text{ for any } [f]\in \mathcal{C}/\mathcal{P}_{\mathbb{E}_{\mathcal{J}}}(A,\tau C).
	\end{equation}
	Conversely, if (\ref{equi condition2}) holds, then clearly $\overline{h}_{*}\beta=\phi_{C,A}(\delta)$.
	
	Thus, there exists $\overline{h}\in \overline{\mathcal{C}}(A',\tau A)$ satisfying $\tau(\underline{a})\circ \overline{h}=\overline{\alpha}$ and $\overline{h}_{*}\beta=\phi_{C,A}(\delta)$ if and only if there exists $\overline{h}\in \overline{\mathcal{C}}(A',\tau A)$ such that (\ref{equi condition1}) and (\ref{equi condition2}) hold.
		
	(5) In this part, we show that there exists $\overline{h}\in \overline{\mathcal{C}}(A',\tau A)$ such that (\ref{equi condition1}) and (\ref{equi condition2}) hold is equivalent to the following 
	\begin{equation}\label{equi condition}
		\text{If }\rho \in \mathbb{E}(B,A') \text{ and }[f]\in \mathcal{C}/\mathcal{P}_{\mathbb{E}_{\mathcal{J}}}(A,\tau C)\text{ satisfy }\underline{a}^{*}\rho=[f]^{*}\beta \text{, then } \eta_{B}(\overline{\alpha}_{*}\rho)=\eta_{C}([f]_{*}\delta).
	\end{equation}
	It suffices to show that if (\ref{equi condition}) holds, then there exists $\overline{h}\in \overline{\mathcal{C}}(A',\tau A)$ such that (\ref{equi condition1}) and (\ref{equi condition2}) hold, since the other direction is obvious. Consider the following diagram in ${\rm mod}R$
	\[\begin{tikzcd}
		0 \arrow[r] & {\rm Ker}u \arrow[r,"i"] & \mathbb{E}(B,A')\oplus \mathcal{C}/\mathcal{P}_{\mathbb{E}_{\mathcal{J}}}(A,\tau C) \arrow[r,"u"] \arrow[d,"v"] & \mathbb{E}(A,A') \arrow[dl,"\exists g" swap, bend left, dashed] \\
		& & J & 
	\end{tikzcd}\]
	in which the first row is exact and $u=\begin{pmatrix}
	\underline{a}^{*} & -\beta_{\sharp}
	\end{pmatrix}, v=\begin{pmatrix}
	\eta_{B}\circ \overline{\alpha}_{*} & -\eta_{C}\circ \delta^{\sharp}
	\end{pmatrix}$. 
	Then $(\rho,[f])\in {\rm Ker}u$ if and only if $\underline{a}^{*}\rho=[f]^{*}\beta$. Thus (\ref{equi condition}) holds if and only if $vi=0$. Since $J$ is injective in ${\rm mod}R$, there exists $g:\mathbb{E}(A,A')\rightarrow J$ such that $g\circ u=v$. Then $g\in \mathbb{D}\mathbb{E}(A,A')\simeq \overline{\mathcal{C}}(A',\tau A)$. Suppose $g$ corresponds to $\overline{h}\in \overline{\mathcal{C}}(A',\tau A)$, then clearly $g\circ u=v$ is equivalent to that both (\ref{equi condition1}) and (\ref{equi condition2}) hold.
	
	(6) In this final part, we show (\ref{equi condition}) holds. Suppose $\rho \in \mathbb{E}(B,A')$ is realized by $A'\rightarrow D\rightarrow B$. By ${\rm (ET4)^{op}}$, we obtain the following commutative diagram in $\mathcal{C}$.
	\[\begin{tikzcd}
		A' \arrow[r,"d"] \arrow[d,equal] & E \arrow[r,"e"] \arrow[d] & A \arrow[r,dashed,"a^{*}\rho"] \arrow[d,"a"] & {} \\
		A' \arrow[r] & D \arrow[r] \arrow[d] & B \arrow[r,dashed,"\rho"] \arrow[d,"b"] & {} \\
		& C \arrow[r,equal] \arrow[d,dashed,"\sigma"] & C \arrow[d,dashed,"\delta"] & \\
		& {} & {} & 
	\end{tikzcd}\]
	Since $\underline{a}^{*}\rho=[f]^{*}\beta \in \overline{\mathbb{E}_{\mathcal{J}}}(A,A')$, we obtain a commutative diagram in $\overline{\mathcal{C}}$ as follows.
	\[\begin{tikzcd}
		A' \arrow[r,"\overline{d}"] \arrow[d,equal] & E \arrow[r,"\overline{e}"] \arrow[d,"\overline{c}"] & A \arrow[r,dashed,"a^{*}\rho"] \arrow[d,"\overline{f}"] & {} \\
		A' \arrow[r,"\overline{\alpha}"] & \tau B \arrow[r,"\tau(\underline{b})"] & \tau C \arrow[r,dashed,"\beta"] & {}
	\end{tikzcd}\]
	Then $\eta_{B}(\overline{\alpha}_{*}\rho)=\eta_{B}(\overline{c}_{*}\overline{d}_{*}\rho)=\eta_{B}(\overline{c}_{*}\underline{b}^{*}\sigma)=\eta_{C}(\tau(\underline{b})_{*}\overline{c}_{*}\sigma)=\eta_{C}(\overline{f}_{*}\overline{e}_{*}\sigma)=\eta_{C}(\overline{f}_{*}\delta)=\eta_{C}([f]_{*}\delta)$. The proof is now complete.
\end{proof}

\begin{cor}
	Suppose $(\mathcal{C},\mathbb{E},\mathfrak{s})$ satisfies the setting of Theorem \ref{main thm2}. Let $(\tau,\eta),(\tau',\eta')$ be right ARS dualities on $(\mathcal{C},\mathbb{E},\mathfrak{s})$ and $(\tau,\phi),(\tau',\phi'):(\underline{\mathcal{C}},\underline{\mathbb{E}^{\mathcal{Q}}},\underline{\mathfrak{s}|_{\mathbb{E}^{\mathcal{Q}}}})\rightarrow (\overline{\mathcal{C}},\overline{\mathbb{E}_{\mathcal{J}}},\overline{\mathfrak{s}|_{\mathbb{E}_{\mathcal{J}}}})$ be the corresponding exact functors obtained in Theorem \ref{main thm2}, and  $\alpha:\tau\xRightarrow{\sim} \tau'$ be the natural isomorphism obtained in Proposition \ref{uniqueness of right ARS}. Then $\alpha$ is a natural isomorphism of exact functors.
\end{cor}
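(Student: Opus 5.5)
The natural isomorphism $\alpha:\tau\xRightarrow{\sim}\tau'$ of functors $\underline{\mathcal{C}}\to\overline{\mathcal{C}}$ is already supplied by Proposition \ref{uniqueness of right ARS}. So the only thing to prove is the compatibility condition in the definition of a natural transformation of exact functors:
\[(\alpha_{A})_{*}\phi_{C,A}(\delta)=(\alpha_{C})^{*}\phi'_{C,A}(\delta)\quad\text{in }\overline{\mathbb{E}_{\mathcal{J}}}(\tau C,\tau' A)\text{ for every }\delta\in\underline{\mathbb{E}^{\mathcal{Q}}}(C,A).\]
My plan is to detect this equality through a non-degenerate pairing, rather than by constructing morphisms of triangles.

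\textbf{Step 1: the detecting pairing.} Apply Lemma \ref{equi def} to the right ARS duality $(\tau'_{\mathbb{E}_{\mathcal{J}}},\eta'^{\mathbb{E}_{\mathcal{J}}})$ induced on $(\mathcal{C},\mathbb{E}_{\mathcal{J}},\mathfrak{s}|_{\mathbb{E}_{\mathcal{J}}})$ by Theorem \ref{main thm1}, and use $\mathcal{I}_{\mathbb{E}^{\mathcal{Q}}}=\mathcal{P}_{\mathbb{E}_{\mathcal{J}}}$ from the proof of Theorem \ref{main thm2}. This gives an isomorphism
\[\overline{\mathbb{E}_{\mathcal{J}}}(\tau C,\tau'A)\simeq \mathbb{D}(\mathcal{C}/\mathcal{P}_{\mathbb{E}_{\mathcal{J}}})(A,\tau C),\qquad \varepsilon\mapsto\big([g]\mapsto \eta'_{A}([g]^{*}\varepsilon)\big),\]
because by the commutative square in Theorem \ref{main thm1} the functional $\eta'^{\mathbb{E}_{\mathcal{J}}}_{A}$ is just the restriction of $\eta'_{A}$. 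Hence two elements of $\overline{\mathbb{E}_{\mathcal{J}}}(\tau C,\tau'A)$ agree as soon as $\eta'_{A}([g]^{*}(-))$ takes the same value on them for every $[g]\in\mathcal{C}/\mathcal{P}_{\mathbb{E}_{\mathcal{J}}}(A,\tau C)$.

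\textbf{Step 2: the two ingredient identities.}
\begin{itemize}
\item From the proof of Claim \ref{cliam}(i), the defining property of $\phi$ is $\eta_{A}([f]^{*}\phi_{C,A}(\delta))=\eta_{C}([f]_{*}\delta)$ for all $[f]:A\to\tau C$. The analogous identity holds for $\phi'$ with $\eta'$ and morphisms $A\to\tau' C$.
\item From the proof of Proposition \ref{uniqueness of right ARS}, $\eta_{X}(\varepsilon)=\eta'_{X}((\alpha_{X})_{*}\varepsilon)$ for all $\varepsilon\in\mathbb{E}(X,\tau X)$.
\end{itemize}

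\textbf{Step 3: evaluate both sides.} For the left-hand side, using that $[g]^{*}$ and $(\alpha_A)_*$ commute, then the second and first identities,
\[\eta'_{A}([g]^{*}(\alpha_{A})_{*}\phi_{C,A}(\delta))=\eta'_{A}((\alpha_{A})_{*}[g]^{*}\phi_{C,A}(\delta))=\eta_{A}([g]^{*}\phi_{C,A}(\delta))=\eta_{C}([g]_{*}\delta).\]
For the right-hand side, $[g]^{*}(\alpha_{C})^{*}=(\alpha_{C}[g])^{*}$ and $\alpha_{C}[g]$ is a morphism $A\to\tau'C$ (its class in the relative quotient is well defined since $\mathcal{I}_{\mathbb{E}}\subseteq\mathcal{P}_{\mathbb{E}_{\mathcal{J}}}$). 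Applying the defining property of $\phi'$ and then the second identity at $X=C$,
\[\eta'_{A}((\alpha_{C}[g])^{*}\phi'_{C,A}(\delta))=\eta'_{C}((\alpha_{C})_{*}[g]_{*}\delta)=\eta_{C}([g]_{*}\delta).\]
The two values agree for every $[g]$, so Step 1 yields the required equality.

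\textbf{Main obstacle.} The computation itself is routine; the delicate point is bookkeeping across the different quotients. One must check that the $\phi$-characterization (stated for $\phi$ in the proof of Claim \ref{cliam}) applies verbatim to $\phi'$. One must also check that the pairing in Step 1 is really given by the unrestricted $\eta'_A$ on these relative Hom-spaces. Both follow from the second commutative diagram of Theorem \ref{main thm1}, and I would verify them explicitly there.
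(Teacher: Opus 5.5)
Your proposal is correct and is essentially the paper's proof. The paper also uses $\overline{\mathbb{E}_{\mathcal{J}}}(\tau C,\tau'A)\simeq \mathbb{D}(\mathcal{C}/\mathcal{P}_{\mathbb{E}_{\mathcal{J}}})(A,\tau C)$ to reduce to comparing $\eta'_{A}([f]^{*}(-))$ on both sides, and evaluates each side to $\eta_{C}([f]_{*}\delta)$ using exactly your two identities: the defining property of $\phi,\phi'$, and $\eta_{X}=\eta'_{X}\circ(\alpha_{X})_{*}$.

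The only slip is the citation in Step 1: that isomorphism is just the Matlis dual of the relative duality $\eta'^{\mathbb{E}_{\mathcal{J}}}_{A,\tau C}$ from Theorem \ref{main thm1}, not an instance of Lemma \ref{equi def}, which would instead pair $\mathbb{E}_{\mathcal{J}}(\tau C,\tau'A)$ with $(\mathcal{C}/\mathcal{I}_{\mathbb{E}_{\mathcal{J}}})(\tau'A,\tau'\tau C)$. Your check that $\alpha_{C}[g]$ is well defined because $\mathcal{I}_{\mathbb{E}}\subseteq\mathcal{P}_{\mathbb{E}_{\mathcal{J}}}$ is a detail the paper leaves implicit.
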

\begin{proof}
	Let $\delta \in \underline{\mathbb{E}^{\mathcal{Q}}}(C,A)$, we need to show $(\alpha_{A})_{*}\phi_{C,A}(\delta)=(\alpha_{C})^{*}\phi'_{C,A}(\delta)$. Let $\eta_{A}:=\eta_{A,A}(\underline{{\rm id}_{A}})$ and $\eta'_{A}:=\eta'_{A,A}(\underline{{\rm id}_{A}})$. Since $\overline{\mathbb{E}_{\mathcal{J}}}(\tau C,\tau'A)\simeq \mathbb{D}(\mathcal{C}/\mathcal{P}_{\mathbb{E}_{\mathcal{J}}})(A,\tau C)$, it is equivalent to
	\[\eta'_{A}([f]^{*}(\alpha_{A})_{*}\phi_{C,A}(\delta))=\eta'_{A}([f]^{*}(\alpha_{C})^{*}\phi'_{C,A}(\delta))\text{ for any }[f]\in \mathcal{C}/\mathcal{P}_{\mathbb{E}_{\mathcal{J}}}(A,\tau C).\]
	Since we have
	\begin{align*}
		\eta'_{A}([f]^{*}(\alpha_{A})_{*}\phi_{C,A}(\delta)) & = \eta_{A}([f]^{*}\phi_{C,A}(\delta)) && \text{ by the definition of } \alpha \text{, see Proposition \ref{uniqueness of right ARS}}\\
		& = \eta_{C}([f]_{*}\delta) && \text{ by the definition of }\phi_{C,A}\text{, see Theorem \ref{main thm2}}
	\end{align*}
	and similarly $\eta'_{A}([f]^{*}(\alpha_{C})^{*}\phi'_{C,A}(\delta))=\eta'_{C}((\alpha_{C})_{*}[f]_{*}\delta)=\eta_{C}([f]_{*}\delta)$, the assertion follows.
\end{proof}

\begin{cor}\label{exact equi}
	Under the setting of Theorem \ref{main thm2}. Suppose there is a left ARS duality $(\sigma,\zeta)$ on $(\mathcal{C},\mathbb{E},\mathfrak{s})$. Then there exists a natural isomorphism $\psi:\overline{\mathbb{E}_{\mathcal{J}}}\xRightarrow{\sim} \underline{\mathbb{E}^{\mathcal{Q}}}\circ (\sigma^{\rm op}\times \sigma)$ such that $(\sigma,\psi): (\overline{\mathcal{C}},\overline{\mathbb{E}_{\mathcal{J}}},\overline{\mathfrak{s}|_{\mathbb{E}_{\mathcal{J}}}})\rightarrow (\underline{\mathcal{C}},\underline{\mathbb{E}^{\mathcal{Q}}},\underline{\mathfrak{s}|_{\mathbb{E}^{\mathcal{Q}}}})$ is an exact equivalence. Moreover it is a quasi-inverse of $(\tau,\phi)$.
\end{cor}
\begin{proof}
	The first part follows from the dual of Theorem \ref{main thm2} and Proposition \ref{left & right ARS duality}, we only show the second part. By Proposition \ref{left & right ARS duality}, there are natural isomorphisms $\alpha: {\rm id}_{\overline{\mathcal{C}}}\simeq \tau \sigma$ and $\beta: {\rm id}_{\overline{\mathcal{C}}}\simeq \tau \sigma$. For any $\delta \in \overline{\mathbb{E}_{\mathcal{J}}}(C,A)$, we need to show $(\alpha_{A})_{*}\delta=(\alpha_{C})^{*}\phi\psi(\delta)$. Let $\eta_{A}:=\eta_{A,A}(\underline{{\rm id}_{A}})$ and $\zeta_{A}:=\zeta_{A,A}(\overline{{\rm id}_{A}})$. By the functorial isomorphism $\overline{\mathbb{E}_{\mathcal{J}}}(C,\tau \sigma A)\simeq \mathbb{D}(\mathcal{C}/\mathcal{P}_{\mathbb{E}_{\mathcal{J}}})(\sigma A,C)$, it is equivalent to
	\[\eta_{\sigma A}([f]^{*}(\alpha_{A})_{*}\delta)=\eta_{\sigma A}([f]^{*}(\alpha_{C})^{*}\phi\psi(\delta))\text{ for any }[f]\in \mathcal{C}/\mathcal{P}_{\mathbb{E}_{\mathcal{J}}}(\sigma A,C).\]
	By the definition of $\alpha$, we have $\eta_{\sigma A}([f]^{*}(\alpha_{A})_{*}\delta)=\zeta_{A}([f]^{*}\delta)$. For the right hand side, we have
	\begin{align*}
		\eta_{\sigma A}([f]^{*}(\alpha_{C})^{*}\phi\psi(\delta)) & = \eta_{\sigma C}((\alpha_{C})_{*}[f]_{*}\psi(\delta)) && \text{ by the definition of } \phi \text{, see Theorem \ref{main thm2}} \\
		& = \zeta_{C}([f]_{*}\psi(\delta)) && \text{ by the definition of } \alpha \text{, see Proposition \ref{left & right ARS duality}} \\
		& = \zeta_{A}([f]^{*}\delta), && \text{ by the definition of } \psi \text{, which is similar to } \phi
	\end{align*}
	which proves the identity. The other half is similar.
\end{proof}

Next we show that the functor $\tau$ preserves almost split sequences. 

\begin{defn}\cite[Definition 2.1]{INP}
	A non-split $\mathbb{E}$-extension $\delta \in \mathbb{E}(C,A)$ is said to be {\em almost split} if it satisfies the following conditions
	 \begin{enumerate}
	 	\item [(AS1)] $a_{*}\delta=0$ for any non-section $a\in \mathcal{C}(A,A')$.
	 	\item [(AS2)] $c^{*}\delta=0$ for any non-retraction $c\in \mathcal{C}(C',C)$.
	 \end{enumerate}
\end{defn}

An $\mathfrak{s}$-triangle $A\rightarrow B\rightarrow C\stackrel{\delta}\dashrightarrow $ is called an {\em almost split $\mathfrak{s}$-triangle} if $\delta \in \mathbb{E}(C,A)$ is an almost split extension.

\begin{prop}\label{tau preserve almost split extension}
	Under the setting of Theorem \ref{main thm2}. Suppose $\delta \in \underline{\mathbb{E}^{\mathcal{Q}}}(C,A)$ is an almost split extension in $(\underline{\mathcal{C}},\underline{\mathbb{E}^{\mathcal{Q}}},\underline{\mathfrak{s}|_{\mathbb{E}^{\mathcal{Q}}}})$. Then $\phi_{C,A}(\delta)\in \overline{\mathbb{E}_{\mathcal{J}}}(\tau C,\tau A)$ is also an almost split extension in $(\overline{\mathcal{C}},\overline{\mathbb{E}_{\mathcal{J}}},\overline{\mathfrak{s}|_{\mathbb{E}_{\mathcal{J}}}})$. Thus if $A\xrightarrow{\underline{x}} B\xrightarrow{\underline{y}} C\stackrel{\delta}\dashrightarrow $ is an almost split $\underline{\mathfrak{s}|_{\mathbb{E}^{\mathcal{Q}}}}$-triangle, then
	\[\begin{tikzcd}
		\tau A \arrow[r,"\tau(\underline{x})"] & \tau B \arrow[r,"\tau(\underline{y})"] & \tau C \arrow[r,"\phi_{C,A}(\delta)",dashed] & {}
	\end{tikzcd}\]
	is also an almost split  $\overline{\mathfrak{s}|_{\mathbb{E}_{\mathcal{J}}}}$-triangle.
\end{prop}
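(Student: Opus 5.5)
The plan is to transport (AS1) and (AS2) across $\tau$, using that $(\tau,\phi)$ is a fully faithful exact functor (Theorem \ref{main thm2}) and that $\phi$ is natural. The second assertion then follows directly from the first, because $(\tau,\phi)$ is exact. Concretely, the triangle $\tau A\xrightarrow{\tau(\underline{x})}\tau B\xrightarrow{\tau(\underline{y})}\tau C\stackrel{\phi_{C,A}(\delta)}\dashrightarrow$ realizes $\phi_{C,A}(\delta)$. So it suffices to show that $\phi_{C,A}(\delta)$ is non-split and satisfies (AS1) and (AS2) in $(\overline{\mathcal{C}},\overline{\mathbb{E}_{\mathcal{J}}},\overline{\mathfrak{s}|_{\mathbb{E}_{\mathcal{J}}}})$.

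Non-splitness is immediate: $\phi_{C,A}$ is an isomorphism and $\delta\neq 0$. For (AS1), take a non-section $\overline{a}\colon\tau A\to A'$ in $\overline{\mathcal{C}}$.

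\begin{itemize}
\item The obstacle is that $A'$ need not lie in the essential image of $\tau$, so we cannot simply write $\overline{a}=\tau(\underline{a_0})$ and apply (AS1) for $\delta$.
\item My first approach is to reduce to morphisms between objects of the image. By Lemma \ref{equi def}, $\overline{a}_*\phi_{C,A}(\delta)=0$ can be tested through the nondegenerate pairing. Concretely, $\overline{\mathbb{E}_{\mathcal{J}}}(\tau C,A')\simeq \mathbb{D}(\mathcal{C}/\mathcal{P}_{\mathbb{E}_{\mathcal{J}}})(\ldots)$ is not available for arbitrary $A'$. Instead I will use the right ARS duality of the relative theory $(\mathcal{C},\mathbb{E}_{\mathcal{J}})$ from Theorem \ref{main thm1}, in its equivalent form (Lemma \ref{equi def}): $\mathbb{E}_{\mathcal{J}}(\tau C,A')\simeq\mathbb{D}(\mathcal{C}/\mathcal{I}_{\mathbb{E}_{\mathcal{J}}})(A',\tau_{\mathbb{E}_{\mathcal{J}}}\tau C)$.
\item With this, $\overline{a}_*\phi(\delta)=0$ holds iff for every $g\colon A'\to\tau\tau C$ the element $(ga)_*\phi(\delta)$ vanishes under the trace.
\item That reduces everything to maps $\tau A\to \tau\tau C$, hence still not into the image of $\tau$. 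So I expect this to be awkward.
\end{itemize}

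A cleaner route is to use the description $\eta_A([f]^*\phi(\delta))=\eta_C([f]_*\delta)$ from the proof of Theorem \ref{main thm2}, together with the characterization of almost split extensions via the duality (\cite[Theorem 3.6]{INP} style). The element $\delta$ is almost split iff, as a functional, $\eta$-dual to $\delta$ corresponds to a socle element. That is, $\delta\in\mathbb{E}(C,A)\simeq\mathbb{D}\overline{\mathcal{C}}(A,\tau C)$ is a nonzero functional killing the radical. Both $A$ and $C$ are then indecomposable with local endomorphism rings, via $\underline{\mathcal{C}}$ and fullness of $\tau$. For the target I will show $\phi(\delta)$ is annihilated by the radical of $\mathrm{End}(\tau C)$ from the right and by the radical of $\mathrm{End}(\tau A)$ from the left. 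This uses full faithfulness: $\mathrm{End}_{\overline{\mathcal{C}}}(\tau A)\cong\mathrm{End}_{\underline{\mathcal{C}}}(A)$, radicals correspond, and $\phi$ is natural, so $\tau(\underline{r})_*\phi(\delta)=\phi(\underline{r}_*\delta)=0$ for $r$ in the radical.

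Finally, I will invoke the standard fact for Krull–Schmidt-type Ext-finite settings. A nonzero $\theta\in\mathbb{E}(Z,X)$ with $X,Z$ having local endomorphism rings and $\theta$ killed by both radicals of endomorphism rings is almost split. The reasoning is as follows. Given a non-section $a\colon X\to A'$ with $a_*\theta\neq0$, pick by Ext-finiteness and the ARS pairing a map $g\colon A'\to X$ with $(ga)_*\theta\ne0$. Then $ga$ is a non-section endomorphism, hence in the radical, which is a contradiction; dually for (AS2). The main obstacle is producing such $g$ with target exactly $\tau A$. For this I would use the relative duality of Theorem \ref{main thm1} applied to $\overline{\mathbb{E}_{\mathcal{J}}}(\tau C,-)$, in the manner of the non-degeneracy argument in Lemma \ref{equi def}.
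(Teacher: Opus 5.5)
There is a genuine gap. Your cleaner route rests on a ``standard fact'' that is false as stated: a nonzero $\theta\in\mathbb{E}(Z,X)$ with $\mathrm{End}(X)$ and $\mathrm{End}(Z)$ local and $\theta$ annihilated by both radicals need not be almost split. Take $\Lambda=k(1\to 2\to 3)$, with representations $V_1\to V_2\to V_3$. The projective cover sequence $0\to P_2\to P_1\to S_1\to 0$ spans $\mathrm{Ext}^1_\Lambda(S_1,P_2)\cong k$, and both endomorphism rings are $k$. Yet this sequence is not almost split, since $\tau S_1\cong S_2\not\cong P_2$.

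Your sketch breaks exactly at the point you flag. The relative duality $\mathbb{E}_{\mathcal{J}}(\tau C,X)\simeq\mathbb{D}(\mathcal{C}/\mathcal{I}_{\mathbb{E}_{\mathcal{J}}})(X,\tau^{2}C)$ produces test maps $X\to\tau^{2}C$, not $X\to\tau A$. Dually, $\mathbb{E}_{\mathcal{J}}(X,\tau A)\simeq\mathbb{D}(\mathcal{C}/\mathcal{P}_{\mathbb{E}_{\mathcal{J}}})(A,X)$ produces maps out of $A$, not out of $\tau C$. Knowing that $\phi_{C,A}(\delta)$ is killed by the radicals of $\mathrm{End}(\tau A)$ and $\mathrm{End}(\tau C)$ cannot bridge this. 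It keeps only the endomorphism part of (AS1) and (AS2) for $\delta$ and discards the rest.

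For (AS1), your abandoned first approach is exactly the paper's proof. Your reason for abandoning it is mistaken: $\tau\tau C=\tau(\tau C)$ \emph{is} in the image of $\tau$.
\begin{itemize}
\item Take a non-section $\overline{a}\colon\tau A\to X$ and a test map $f\colon X\to\tau^{2}C$.
\item Fullness of $\tau$ gives $\underline{g}\in\underline{\mathcal{C}}(A,\tau C)$ with $\tau(\underline{g})=\overline{fa}$.
\item $\underline{g}$ is a non-section because $\overline{a}$ is.
\item So (AS1) for $\delta$, applied to this morphism $A\to\tau C$ (not a radical endomorphism of $A$), gives $\underline{g}_*\delta=0$.
\item Hence $\widetilde{f}_*\overline{a}_*\phi_{C,A}(\delta)=\phi_{C,\tau C}(\underline{g}_*\delta)=0$.
\end{itemize}

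For (AS2) there is no such shortcut, and the paper needs an input you never mention.
\begin{itemize}
\item The ideal quotient $(\underline{\mathcal{C}},\underline{\mathbb{E}^{\mathcal{Q}}},\underline{\mathfrak{s}|_{\mathbb{E}^{\mathcal{Q}}}})$ inherits a right ARS duality via Theorem \ref{main thm1}.
\item So \cite[Proposition 3.3, 2.6]{INP} yields an isomorphism $\underline{b}\colon\tau C\to A$, whose image $[b]$ remains invertible in $\mathcal{C}/\mathcal{P}_{\mathbb{E}_{\mathcal{J}}}$.
\item Take a non-retraction $\overline{c}\colon X\to\tau C$ and a test map $[f']\colon A\to X$. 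The endomorphism $\overline{cf'b}$ of $\tau C$ equals $\tau(\underline{g'})$ for a non-retraction $\underline{g'}$ of $C$.
\item Hence $[b]^*[f']^*\overline{c}^*\phi_{C,A}(\delta)=\phi_{C,A}(\underline{g'}^*\delta)=0$.
\end{itemize}

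Once you have $A\cong\tau C$, and hence $\tau A\cong\tau^{2}C$, your radical argument could also be completed. Without that identification, the proposal does not prove the statement.
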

\begin{proof}
	Let $\overline{a}\in \overline{\mathcal{C}}(\tau A,X)$ be a non-section. We show that $\overline{a}_{*}\phi_{C,A}(\delta)=0$. Recall $\eta_{A}:=\eta_{A,A}(\underline{{\rm id}_{A}})$. Since $\overline{a}_{*}\phi_{C,A}(\delta) \in \overline{\mathbb{E}_{\mathcal{J}}}(\tau C,X)=\mathbb{E}_{\mathcal{J}}(\tau C,X) \cong \mathbb{D}(\mathcal{C}/\mathcal{I}_{\mathbb{E}_{\mathcal{J}}}(X,\tau^{2}C))$, it suffices to show that for any $\widetilde{f} \in (\mathcal{C}/\mathcal{I}_{\mathbb{E}_{\mathcal{J}}})(X,\tau^{2}C)$ we have $\eta_{\tau C}(\widetilde{f}_{*}\overline{a}_{*}\phi_{C,A}(\delta))=0$. Since $\tau$ is fully faithful, there exists $\underline{g} \in \underline{\mathcal{C}}(A,\tau C)$ satisfying $\tau(\underline{g})=\overline{fa}$. Thus $\widetilde{f}_{*}\overline{a}_{*}\phi_{C,A}(\delta)=\tau(\underline{g})_{*}\phi_{C,A}(\delta)=\phi_{C,\tau C}(\underline{g}_{*}\delta)$. We claim $\underline{g}$ is a non-section. Assume otherwise, then $\overline{a}$ is a section, a contradiction. Since $\delta$ is almost split, $\underline{g}_{*}\delta=0$, which proves (AS1).
	
	As for (AS2), let $\overline{c}\in \overline{\mathcal{C}}(X,\tau C)$ be a non-retraction. We show that $\overline{c}^{*}\phi_{C,A}(\delta)=0$. By Theorem \ref{main thm1}, the right ARS duality $(\tau,\eta)$ induces the right ARS duality $(\tau_{\mathbb{E}^{\mathcal{Q}}},\eta^{\mathbb{E}^{\mathcal{Q}}})$ on $(\mathcal{C},\mathbb{E}^{\mathcal{Q}},\mathfrak{s}|_{\mathbb{E}^{\mathcal{Q}}})$. Thus the ideal quotient $(\underline{\mathcal{C}},\underline{\mathbb{E}^{\mathcal{Q}}},\underline{\mathfrak{s}|_{\mathbb{E}^{\mathcal{Q}}}})$ also has a right ARS duality. By \cite[Proposition 3.3, 2.6]{INP}, there is an isomorphism $\underline{b}\in \underline{\mathcal{C}}(\tau C,A)$. Since $\overline{c}^{*}\phi_{C,A}(\delta) \in \overline{\mathbb{E}_{\mathcal{J}}}(X,\tau A)=\mathbb{E}_{\mathcal{J}}(X,\tau A)\cong \mathbb{D}(\mathcal{C}/\mathcal{P}_{\mathbb{E}_{\mathcal{J}}}(A,X))$, it suffices to show that for any $[f']\in \mathcal{C}/\mathcal{P}_{\mathbb{E}_{\mathcal{J}}}(A,X)$ we have $\eta_{A}([f']^{*}\overline{c}^{*}\phi_{C,A}(\delta))=0$. Consider $\overline{cf'b}\in \overline{\mathcal{C}}(\tau C,\tau C)$, there exists $\underline{g'}\in \underline{\mathcal{C}}(C,C)$ satisfying $\tau(\underline{g'})=\overline{cf'b}$. As above, the morphism $\underline{g'}$ is a non-retraction. Recall that $\mathcal{P}_{\mathbb{E}_{\mathcal{J}}}=[{\rm add}(\mathcal{Q}\cup \mathcal{J})]$. Thus the morphism $[b]\in \mathcal{C}/\mathcal{P}_{\mathbb{E}_{\mathcal{J}}}(\tau C,A)$, which is the image of $\underline{b}\in \underline{\mathcal{C}}(\tau C,A)$, is also an isomorphism. We have $[b]^{*}[f']^{*}\overline{c}^{*}\phi_{C,A}(\delta)=(\overline{cf'b})^{*}\phi_{C,A}(\delta)=\tau(\underline{g'})^{*}\phi_{C,A}(\delta)=\phi_{C,A}({\underline{g'}}^{*}\delta)=0$. Therefore $[f']^{*}\overline{c}^{*}\phi_{C,A}(\delta)=0$ and (AS2) follows.
	
	The last part follows from Theorem \ref{main thm2}.
\end{proof}

\section{Applications and examples}

In this section, let $R$ be a commutative artin ring and $(\mathcal{C},\mathbb{E},\mathfrak{s})$ be a weakly idempotent complete $R$-linear Ext-finite extriangulated category. Suppose $(\mathcal{C},\mathbb{E},\mathfrak{s})$ has enough projectives $\mathcal{Q}$ and enough injectives $\mathcal{J}$ which are both functorially finite. 

\subsection{Induced triangulated structure}

\begin{lem}\label{substructure}
	Suppose $(\mathcal{C},\mathbb{E},\mathfrak{s})$ has an ARS duality $(\tau,\eta)$.
	\begin{enumerate}
		\item $(\mathcal{C},\mathbb{E}^{\mathcal{Q}},\mathfrak{s}|_{\mathbb{E}^{\mathcal{Q}}})$ has enough projectives and enough injectives which are both functorially finite.
		\item $(\mathcal{C},\mathbb{E}_{\mathcal{J}},\mathfrak{s}|_{\mathbb{E}_{\mathcal{J}}})$ has enough projectives and enough injectives which are both functorially finite.
	\end{enumerate}
\end{lem}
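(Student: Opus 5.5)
By the symmetry between $\mathbb{E}^{\mathcal{Q}}$ and $\mathbb{E}_{\mathcal{J}}$, it suffices to prove (1); (2) is the dual statement. The injective half of (1) is already in hand. In the proof of Theorem \ref{main thm2}, using a left $\mathcal{Q}$-approximation and Lemma \ref{weak pushout}, we showed that $(\mathcal{C},\mathbb{E}^{\mathcal{Q}},\mathfrak{s}|_{\mathbb{E}^{\mathcal{Q}}})$ has enough injectives, namely ${\rm add}(\mathcal{Q}\cup\mathcal{J})$. This class is functorially finite, because $\mathcal{Q}$ and $\mathcal{J}$ are, and a finite sum of approximations is an approximation of ${\rm add}$ of the union. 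So the real content is the projective half.

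For the projectives, I would use the ARS duality. Set
\[\mathcal{Q}' := {\rm add}(\mathcal{Q}\cup \sigma\mathcal{Q}),\]
where $\sigma$ is a quasi-inverse of $\tau$. Concretely, for each $Q\in\mathcal{Q}$ we choose an object $\tau^{-}Q\in\mathcal{C}$ representing $\sigma(Q)\in\underline{\mathcal{C}}$; by Proposition \ref{left & right ARS duality} this gives a left ARS duality. I expect $\mathcal{Q}'$ to be exactly the $\mathbb{E}^{\mathcal{Q}}$-projectives modulo $\mathcal{Q}$.

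\emph{Projectivity.} A class $\delta\in\mathbb{E}(C,A)$ lies in $\mathbb{E}^{\mathcal{Q}}$ if and only if $\delta^{\sharp}_Q=0$ for all $Q\in\mathcal{Q}$, i.e. $a^{*}\delta=0$ for all $a:A\to Q$. I would transport this condition through $\mathbb{E}(C,A)\simeq\mathbb{D}\overline{\mathcal{C}}(A,\tau C)$ (Lemma \ref{equi def}), and dually through the left duality $\mathbb{E}(\sigma Q,A)\simeq \mathbb{D}\overline{\mathcal{C}}(A,Q)$. This should show that $\mathbb{E}^{\mathcal{Q}}(\sigma Q,-)=0$: an element $\delta\in\mathbb{E}(\sigma Q,A)$ is the functional $\overline{g}\mapsto\zeta_Q(\overline{g}_*\delta)$, and the $\mathbb{E}^{\mathcal{Q}}$-condition should force it to vanish on every $\overline{g}:A\to Q$. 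Verifying this compatibility of the two pairings is the point I expect to be the main obstacle. If it resists, my fallback is to work with the characterization that $\delta\in\mathbb{E}^{\mathcal{Q}}(C,A)$ if and only if the deflation $B\to C$ has the lifting property with respect to the appropriate maps, and then use the almost split sequences ending at the $\tau^{-}Q$.

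\emph{Enough projectives and functorial finiteness.} Given $C$, take a right $\sigma\mathcal{Q}$-approximation $P'\to C$ (needed below), and an $\mathfrak{s}$-triangle $K\to P\to C\dashrightarrow$ with $P\in\mathcal{Q}$. By the dual of Lemma \ref{weak pushout}, these combine into a triangle $K'\to P\oplus P'\to C\dashrightarrow$. Its class lies in $\mathbb{E}^{\mathcal{Q}}$ exactly when $P'\to C$ induces surjectivity on the relevant functors, and via the duality this amounts to $P'\to C$ being an approximation by $\sigma\mathcal{Q}$. So we need $\sigma\mathcal{Q}$ to be functorially finite. Since $\underline{\mathcal{C}}\simeq\overline{\mathcal{C}}$ via $\tau$, the quotient $\sigma\mathcal{Q}$ in $\underline{\mathcal{C}}$ corresponds to the image of $\mathcal{Q}$ in $\overline{\mathcal{C}}$. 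Approximations in the quotient lift to $\mathcal{C}$ after adding $\mathcal{Q}$ (resp. $\mathcal{J}$) summands, using the functorial finiteness of $\mathcal{Q}$ and $\mathcal{J}$. This yields functorial finiteness of $\mathcal{Q}'$.
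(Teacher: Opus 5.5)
Your proposal is correct. For the projective half of (1) it follows a genuinely different route from the paper.

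\textbf{The paper's route.} The paper proves (2) and leaves (1) as dual. One side comes from part (1) of the proof of Theorem \ref{main thm2}, exactly as you use it. For the other side the paper proceeds as follows:
\begin{enumerate}
\item It uses weak idempotent completeness to see that the quotient $(\underline{\mathcal{C}},\underline{\mathbb{E}^{\mathcal{Q}}},\underline{\mathfrak{s}|_{\mathbb{E}^{\mathcal{Q}}}})$ has enough injectives ${\rm add}\,\mathcal{J}$.
\item It transports these along $(\tau,\phi)$ of Theorem \ref{main thm2}, which is an exact equivalence because $\tau$ is an equivalence. This gives enough injectives ${\rm add}\,\tau\mathcal{J}$ in $(\overline{\mathcal{C}},\overline{\mathbb{E}_{\mathcal{J}}},\overline{\mathfrak{s}|_{\mathbb{E}_{\mathcal{J}}}})$.
\item It lifts this to ${\rm add}(\mathcal{J}\cup\tau\mathcal{J})$ in $\mathcal{C}$.
\item It proves functorial finiteness with the same approximation trick as your last paragraph.
\end{enumerate}
Dualized, this produces exactly your class ${\rm add}(\mathcal{Q}\cup\sigma\mathcal{Q})$.

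\textbf{Your route.} You never leave $\mathcal{C}$ and use only two non-degenerate pairings coming from the left duality $\zeta$:
\begin{itemize}
\item $(\delta,\overline g)\mapsto\zeta_Q(\overline g_*\delta)$ on $\mathbb{E}(\sigma Q,A)\times\overline{\mathcal{C}}(A,Q)$;
\item $(\epsilon,\underline u)\mapsto\zeta_Q(\underline u^*\epsilon)$ on $\mathbb{E}(C,Q)\times\underline{\mathcal{C}}(\sigma Q,C)$, which is the dual of Lemma \ref{equi def}.
\end{itemize}
Together these give $\mathbb{E}^{\mathcal{Q}}=\mathbb{E}_{\sigma\mathcal{Q}}$ as subfunctors of $\mathbb{E}$:
\begin{itemize}
\item If $g_*\delta=0$ for all $g:A\to Q$, then $g_*u^*\delta=u^*g_*\delta=0$, so $u^*\delta=0$ by the first pairing.
\item Conversely, if $u^*\theta=0$ for all $u:\sigma Q\to C$, then $\zeta_Q(\underline u^*g_*\theta)=\zeta_Q(g_*u^*\theta)=0$ for all $\underline u$, so $g_*\theta=0$ by the second pairing.
\end{itemize}
This is the extriangulated form of the Auslander--Solberg principle that making $\sigma\mathcal{Q}$ relatively projective is the same as making $\mathcal{Q}$ relatively injective. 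It settles both the projectivity of $\sigma\mathcal{Q}$ and the fact that your triangle $K'\to P'\oplus P\to C$ is an $\mathfrak{s}|_{\mathbb{E}^{\mathcal{Q}}}$-triangle.

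\textbf{Comparison.} Your route bypasses Theorem \ref{main thm2} and weak idempotent completeness. It also builds the deflations directly in $\mathcal{C}$, so no separate lifting step from the quotient back to $\mathcal{C}$ is needed. The paper's route is shorter once its main theorem is available, and it shows that theorem in use.

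\textbf{Two small corrections.}
\begin{itemize}
\item The step you flag as the main obstacle is immediate. The defining condition of $\mathbb{E}^{\mathcal{Q}}$ is $a_*\delta=0$ for $a:A\to Q$, not $a^*\delta$. Taking $Q$ itself as the target, $\overline g\mapsto\zeta_Q(\overline g_*\delta)$ vanishes identically, so $\delta=0$. Neither the right-duality pairing nor the almost split fallback is needed.
\item $P'\to C$ only has to be a right ${\rm add}\,\sigma\mathcal{Q}$-approximation in $\underline{\mathcal{C}}$, which exists by your last paragraph. The discrepancy lies in $[\mathcal{Q}]$, so it factors through some $Q_1\in\mathcal{Q}$. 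It then lifts through the deflation $P\to C$ because $\mathbb{E}(Q_1,K)=0$.
\end{itemize}
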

\begin{proof}
	We only show (2) since (1) is similar. By part (1) of the proof of Theorem \ref{main thm2}, we know that $(\mathcal{C},\mathbb{E}_{\mathcal{J}},\mathfrak{s}|_{\mathbb{E}_{\mathcal{J}}})$ has enough projectives ${\rm add}(\mathcal{Q}\cup \mathcal{J})$. Since $\mathcal{Q}$ and $\mathcal{J}$ are functorially finite, so is ${\rm add}(\mathcal{Q}\cup \mathcal{J})$. Since $(\mathcal{C},\mathbb{E}^{\mathcal{Q}},\mathfrak{s}|_{\mathbb{E}^{\mathcal{Q}}})$ has enough injectives ${\rm add}(\mathcal{Q}\cup \mathcal{J})$ and $\mathcal{C}$ is weakly idempotent complete, the ideal quotient $(\underline{\mathcal{C}},\underline{\mathbb{E}^{\mathcal{Q}}},\underline{\mathfrak{s}|_{\mathbb{E}^{\mathcal{Q}}}})$ has enough injectives ${\rm add}\mathcal{J}$ which is functorially finite in $\underline{\mathcal{C}}$. By Theorem \ref{main thm2}, there is an exact equivalence $(\tau,\phi):(\underline{\mathcal{C}},\underline{\mathbb{E}^{\mathcal{Q}}},\underline{\mathfrak{s}|_{\mathbb{E}^{\mathcal{Q}}}})\rightarrow (\overline{\mathcal{C}},\overline{\mathbb{E}_{\mathcal{J}}},\overline{\mathfrak{s}|_{\mathbb{E}_{\mathcal{J}}}})$. Thus $(\overline{\mathcal{C}},\overline{\mathbb{E}_{\mathcal{J}}},\overline{\mathfrak{s}|_{\mathbb{E}_{\mathcal{J}}}})$ has enough injectives $\tau({\rm add}\mathcal{J})={\rm add}(\tau \mathcal{J})$ which is functorially finite in $\overline{\mathcal{C}}$. Then $(\mathcal{C},\mathbb{E}_{\mathcal{J}},\mathfrak{s}|_{\mathbb{E}_{\mathcal{J}}})$ has enough injectives ${\rm add}(\mathcal{J}\cup \tau \mathcal{J})$. It remains to show it is contravariantly finite. For $X\in \mathcal{C}$, consider the morphism $\begin{pmatrix}
		a & b
	\end{pmatrix}:J\oplus \tau J'\rightarrow X$ where $a$ is a right $\mathcal{J}$-approximation in $\mathcal{C}$ and $b$ is a morphsim such that $\overline{b}$ is a right ${\rm add}(\tau \mathcal{J})$-approximation in $\overline{\mathcal{C}}$. We claim it is a right ${\rm add}(\mathcal{J}\cup \tau \mathcal{J})$-approximation in $\mathcal{C}$. Let $c: Y\rightarrow X$ be a morphism. It suffices to consider the case where $Y\in \mathcal{J}$ and $Y\in \tau \mathcal{J}$. Since the former one is obvious, we assume $Y\in \tau \mathcal{J}$. There exists $\overline{d}\in \overline{\mathcal{C}}(Y,\tau J')$ satisfying $\overline{c}=\overline{bd}$. Then $c-bd=ae$ for some $e:Y\rightarrow J$ because $a$ is an approximation. Thus we obtain $c= \begin{pmatrix}
	a & b
	\end{pmatrix} \begin{pmatrix}
	e\\d
	\end{pmatrix}$ and the assertion follows.
\end{proof}

\begin{defn}\label{preproj & preinj}
	Suppose $(\mathcal{C},\mathbb{E},\mathfrak{s})$ has an ARS duality $(\tau,\eta)$. Denote by $\widetilde{\mathcal{P}}$ (resp. $\widetilde{\mathcal{I}}$) the subcategory ${\rm add}(\bigcup \limits_{n\geq 0}\tau^{-n}\mathcal{Q})$ (resp. ${\rm add}(\bigcup \limits_{n\geq 0}\tau^{n}\mathcal{J})$). A nonzero object in $\mathcal{C}$ is called {\em preprojective} (resp. {\em preinjective}) if it belongs to $\widetilde{\mathcal{P}}$ (resp. $\widetilde{\mathcal{I}}$). 
\end{defn}

\begin{rem}\label{independent}
	By Proposition \ref{uniqueness of right ARS}, Definition \ref{preproj & preinj} is independent of the choice of $\tau$.
\end{rem}

\begin{prop}\label{induced tri cat}
	Suppose $(\mathcal{C},\mathbb{E},\mathfrak{s})$ has an ARS duality $(\tau,\eta)$ and is Krull-Schmidt and both ${\rm ind}\mathcal{Q}$ and ${\rm ind}\mathcal{J}$ are finite sets. If the preprojectives coincides with the preinjectives, then $\widetilde{\mathcal{C}}:=\mathcal{C}/[\widetilde{\mathcal{P}}]$ is a triangulated category and $\tau$ induces a triangulated auto-equivalence on $\widetilde{\mathcal{C}}$.
\end{prop}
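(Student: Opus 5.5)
The plan is to realise $\widetilde{\mathcal{C}}$ as the stable category of a Frobenius extriangulated category. We can then invoke \cite[Corollary 7.4]{NP}, which says that the ideal quotient of a Frobenius extriangulated category by its projective-injectives is triangulated.

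\textbf{Step 1: the shape of $\widetilde{\mathcal{P}}$.} Since $\mathcal{C}$ is Krull-Schmidt and ${\rm ind}\mathcal{Q}$, ${\rm ind}\mathcal{J}$ are finite, I first analyse $\widetilde{\mathcal{P}}=\widetilde{\mathcal{I}}$. By Proposition \ref{left & right ARS duality} and Remark \ref{independent}, $\tau$ and $\tau^{-1}$ act on indecomposable non-projective (resp. non-injective) objects. The hypothesis that preprojectives equal preinjectives forces every $\tau^{-1}$-orbit starting at an indecomposable projective to reach an indecomposable injective after finitely many steps. Hence ${\rm ind}\widetilde{\mathcal{P}}$ is a finite set. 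It follows that $\widetilde{\mathcal{P}}$ is functorially finite: combine approximations by $\mathcal{Q}$ and $\mathcal{J}$ as in the proof of Lemma \ref{substructure}, and use that $\underline{\mathcal{C}}$ and $\overline{\mathcal{C}}$ are Hom-finite via $\eta$ and Ext-finiteness. Moreover, $\tau$ permutes the non-projective objects of $\widetilde{\mathcal{P}}$ modulo $\mathcal{Q}$ and $\mathcal{J}$, so it preserves $\widetilde{\mathcal{P}}$ up to these ideals.

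\textbf{Step 2: the relative structure.} Consider $\mathbb{F}:=\mathbb{E}_{\widetilde{\mathcal{P}}}\cap\mathbb{E}^{\widetilde{\mathcal{P}}}$. This is a closed subfunctor, since an intersection of closed subfunctors is closed (inflations remain closed under composition). By Example \ref{typical exam of subfunctor}, every object of $\widetilde{\mathcal{P}}$ is projective-injective in $(\mathcal{C},\mathbb{F},\mathfrak{s}|_{\mathbb{F}})$. The key claim is that this relative theory has enough projectives and enough injectives, both equal to $\widetilde{\mathcal{P}}$.

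For $X\in\mathcal{C}$, take a right $\widetilde{\mathcal{P}}$-approximation $p:P\to X$. Since $\mathcal{Q}\subseteq\widetilde{\mathcal{P}}$, the map $p$ is an $\mathfrak{s}$-deflation by (WIC) / Lemma \ref{WIC=w.i.c.}, giving $K\to P\to X\stackrel{\delta}\dashrightarrow$. The approximation property gives $(\delta_{\sharp})_{D}=0$ for $D\in\widetilde{\mathcal{P}}$. It remains to check $(\delta^{\sharp})_{D}=0$, i.e.\ that $K\to P$ is a left $\widetilde{\mathcal{P}}$-approximation. This is the main obstacle.

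My approach to it is through the ARS duality. Using $\mathbb{E}(X,D)\simeq\mathbb{D}\overline{\mathcal{C}}(D,\tau X)$ and $\mathbb{E}(D',K)\simeq\mathbb{D}\underline{\mathcal{C}}(\tau^{-1}D',\ldots)$-type identifications from Lemma \ref{equi def}, the vanishing of $\delta^{\sharp}$ on $D$ dualises to a vanishing of $\delta_{\sharp}$ on $\tau^{-1}D$ or $\tau D$. These objects again lie in $\widetilde{\mathcal{P}}$ (up to $\mathcal{Q},\mathcal{J}$) precisely because $\widetilde{\mathcal{P}}=\widetilde{\mathcal{I}}$ is $\tau^{\pm}$-stable. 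The construction of enough injectives is dual. Thus $(\mathcal{C},\mathbb{F},\mathfrak{s}|_{\mathbb{F}})$ is Frobenius with projective-injectives $\widetilde{\mathcal{P}}$, and $\widetilde{\mathcal{C}}$ is triangulated by \cite[Corollary 7.4]{NP}.

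\textbf{Step 3: the induced auto-equivalence.} Since $\tau$ is an equivalence $\underline{\mathcal{C}}\to\overline{\mathcal{C}}$ sending $\widetilde{\mathcal{P}}$ into $\widetilde{\mathcal{P}}$ modulo $\mathcal{Q}$ and $\mathcal{J}$ (and $\tau^{-1}$ likewise), it descends to an auto-equivalence $\widetilde{\tau}$ of $\widetilde{\mathcal{C}}=\underline{\mathcal{C}}/[\widetilde{\mathcal{P}}]=\overline{\mathcal{C}}/[\widetilde{\mathcal{P}}]$. For exactness, I restrict the exact functor $(\tau,\phi)$ of Theorem \ref{main thm2} to $\mathbb{F}$. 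Note $\mathbb{F}\subseteq\mathbb{E}^{\mathcal{Q}}\cap\mathbb{E}_{\mathcal{J}}$, and the naturality of $\phi$ together with the $\tau$-stability of $\widetilde{\mathcal{P}}$ shows that $\phi$ maps $\mathbb{F}$ into $\mathbb{F}$. Hence $\tau$ sends $\mathfrak{s}|_{\mathbb{F}}$-triangles to $\mathfrak{s}|_{\mathbb{F}}$-triangles and induces an exact equivalence of the Frobenius categories. Passing to stable categories then yields a triangulated auto-equivalence, by Remark \ref{rem on exact functors}(2).
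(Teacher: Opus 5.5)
Your proposal is correct, but it reaches the Frobenius structure by a different route than the paper.

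\textbf{Comparison with the paper.} The paper first replaces $\tau$ by a version in which $\tau X$ has no injective summands. It then does the orbit bookkeeping: $Q_i\cong\tau^{r_i}J_{n_i}$ with $J_{n_i}$ unique, hence $m=n$. Next it runs the chain $\mathbb{E}_{i+1}:=(\mathbb{E}_i)_{\mathcal{J}_i}$, using Lemma \ref{substructure} and Theorem \ref{main thm1} at each stage. This shows that $(\mathcal{C},\mathbb{E}_i,\mathfrak{s}|_{\mathbb{E}_i})$ has projectives ${\rm add}(\mathcal{Q}\cup\bigcup_{n<i}\tau^n\mathcal{J})$ and injectives ${\rm add}(\bigcup_{n\le i}\tau^n\mathcal{J})$. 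At $N=\max r_i$ both equal $\widetilde{\mathcal{P}}$, and exactness of the induced $\tau$ comes from Theorems \ref{main thm1} and \ref{main thm2} applied to that relative theory.

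You write down the limit directly. By the maximality in Example \ref{typical exam of subfunctor}, the paper's $\mathbb{E}_N$ is exactly your $\mathbb{F}$. You then prove it is Frobenius in one step, using approximations and a single duality computation. For exactness you restrict the $(\tau,\phi)$ of Theorem \ref{main thm2} for $\mathbb{E}$, with the correct extra check (via naturality of $\phi$) that $\phi$ preserves $\mathbb{F}$. Strictly speaking this gives an exact functor $(\underline{\mathcal{C}},\underline{\mathbb{F}})\to(\overline{\mathcal{C}},\overline{\mathbb{F}})$ that descends to $\widetilde{\mathcal{C}}$, not an equivalence ``of the Frobenius categories'', since $\tau$ is not defined on $\mathcal{C}$.

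Your route avoids the iteration and the normalization of $\tau$. The paper's route gives explicit descriptions of all intermediate structures and gets exactness directly from its main theorems.

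\textbf{The sketched step.} The step you only sketch carries the argument, so it should be stated precisely. Take the pairing $\langle\delta,\overline f\rangle=\eta_X(\overline f_*\delta)$ on $\mathbb{E}(X,K)\times\overline{\mathcal{C}}(K,\tau X)$ from Lemma \ref{equi def}. Naturality gives:
\begin{enumerate}
\item $\langle k_*\delta,\overline g\rangle=\langle\delta,\overline{gk}\rangle$ for $k\in\mathcal{C}(K,D)$ and $\overline g\in\overline{\mathcal{C}}(D,\tau X)$;
\item $\langle x^*\delta,\overline h\rangle=\langle\delta,\tau(\underline x)\overline h\rangle$ for $x\in\mathcal{C}(D',X)$ and $\overline h\in\overline{\mathcal{C}}(K,\tau D')$.
\end{enumerate}
By non-degeneracy and fullness of $\tau$, $(\delta^\sharp)_D=0$ iff $\delta$ annihilates the morphisms $K\to\tau X$ in $\overline{\mathcal{C}}$ that factor through $D$. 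Likewise $(\delta_\sharp)_{D'}=0$ iff $\delta$ annihilates those factoring through $\tau D'$. Since $\tau$ is an equivalence, $(\delta^\sharp)_D=0$ iff $(\delta_\sharp)_{\tau^{-1}D}=0$.

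So the relevant object is $\tau^{-1}D$ (not ``$\tau^{-1}D$ or $\tau D$''). The identification you need is $\mathbb{E}(D',K)\simeq\mathbb{D}\overline{\mathcal{C}}(K,\tau D')$, not one involving $\underline{\mathcal{C}}(\tau^{-1}D',\ldots)$.

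This argument yields $\mathbb{E}_{\widetilde{\mathcal{P}}}=\mathbb{E}^{\widetilde{\mathcal{P}}}$. It uses only that $\widetilde{\mathcal{P}}$ is closed under $\tau^{\pm1}$ up to projective and injective summands, which holds by construction. The hypothesis $\widetilde{\mathcal{P}}=\widetilde{\mathcal{I}}$ is needed only for two things. One is finiteness of ${\rm ind}\,\widetilde{\mathcal{P}}$, which gives functorial finiteness. The other is $\mathcal{J}\subseteq\widetilde{\mathcal{P}}$, which is what makes left $\widetilde{\mathcal{P}}$-approximations inflations in the ``dual'' half; you should say so explicitly there.
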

\begin{proof}
	Note that for any $X\in \mathcal{C}$, $\tau X$ may have injective direct summands. Thus for convenience, we construct another $\tau'$ as follows. For any $X\in \mathcal{C}$, define $\tau'X \in \mathcal{C}$ to be a direct summand of $\tau X$ such that $\tau X\cong \tau'X$ in $\overline{\mathcal{C}}$ and $\tau'X$ has no nonzero direct summands in $\mathcal{J}$. Fix an isomorphism $\alpha_{X}\in \overline{\mathcal{C}}(\tau X,\tau'X)$. For any $\underline{f}\in \underline{\mathcal{C}}(X,Y)$, define $\tau'(\underline{f})=\alpha_{Y}\tau(\underline{f})(\alpha_{X})^{-1}$. Then we obtain a functor $\tau'$ and $\alpha:\tau \simeq \tau'$. By Proposition \ref{uniqueness of right ARS}, there is another ARS duality $(\tau',\eta')$. We may replace $(\tau,\eta)$ with $(\tau',\eta')$ at the beginning by Remark \ref{independent}, but keep the notations unchanged.
	
	Since ${\rm ind}\mathcal{Q}$ and ${\rm ind}\mathcal{J}$ are finite, denote their elements by $Q_{1},\cdots,Q_{m}$ and $J_{1},\cdots ,J_{n}$ respectively. By assumptions, for any $Q_{i}$, there exists a non-negative integer $r_{i}$ such that $Q_{i}\cong \tau^{r_{i}}J_{n_{i}}$ for some $J_{n_{i}}$. Then $\tau J_{n_{i}}, \cdots, \tau^{r_{i}-1}J_{n_{i}}$ are all indecomposable non-projective and non-injective. We claim $J_{n_{i}}$ is uniquely determined by $Q_{i}$. Indeed, suppose $Q_{i}\cong \tau^{r_{i}^{'}}J_{n_{i}^{'}}$. If $r_{i}\neq r_{i}^{'}$, suppose $r_{i}>r_{i}^{'}$, then $\tau^{r_{i}-r_{i}^{'}}J_{n_{i}}=J_{n_{i}^{'}}$, a contradiction. Therefor $r_{i}=r_{i}^{'}$ and consequently $J_{n_{i}}\cong J_{n_{i}^{'}}$. If $Q_{i}\ncong Q_{j}$, then clearly $J_{n_{i}}\ncong J_{n_{j}}$. This implies $m\leq n$. Similarly $n\leq m$ holds. Thus $m=n$.
	
	By the argument above, we see that the subcategory
	\[{\rm add}\{\tau^{k_{i}}J_{i}\,|\,1\leq i\leq n,0\leq k_{i}\leq r_{i}\}\]
	coincides with $\widetilde{\mathcal{P}}$ and $\widetilde{\mathcal{I}}$. Let $N:={\rm max}\{r_{1},\cdots,r_{n}\}$. Consider the following construction. Let $\mathbb{E}_{0}=\mathbb{E}$ and $\mathcal{J}_{0}=\mathcal{J}$. Define $\mathbb{E}_{1}:=(\mathbb{E}_{0})_{\mathcal{J}_{0}}$. In the extriangulated category $(\mathcal{C},\mathbb{E}_{1},\mathfrak{s}|_{\mathbb{E}_{1}})$, let $\mathcal{J}_{1}$ be the subcategory of injectives. Define $\mathbb{E}_{2}:=(\mathbb{E}_{1})_{\mathcal{J}_{1}}$. Continue this procedure, we obtain a descending chain of closed subfunctors of $\mathbb{E}$ as follows
	\begin{equation}\label{descend chain}
		\mathbb{E}=\mathbb{E}_{0}\supseteq \mathbb{E}_{1}\supseteq \mathbb{E}_{2}\supseteq \cdots \supseteq \mathbb{E}_{i}\supseteq \cdots
	\end{equation}
	where $\mathbb{E}_{i+1}:=(\mathbb{E}_{i})_{\mathcal{J}_{i}}$ and $i\geq 0$. By Lemma \ref{substructure} and Theorem \ref{main thm1}, $(\mathcal{C},\mathbb{E}_{i},\mathfrak{s}|_{\mathbb{E}_{i}})$ has enough projectives ${\rm add}(\mathcal{Q} \cup \bigcup \limits_{n=0}^{i-1}\tau^{n}\mathcal{J})$ and enough injectives ${\rm add}(\bigcup \limits_{n=0}^{i}\tau^{n}\mathcal{J})$. Therefore the chain (\ref{descend chain}) stabilizes at $N$ and $(\mathcal{C},\mathbb{E}_{N},\mathfrak{s}|_{\mathbb{E}_{N}})$ is a Frobenius extriangulated category, whose subcategory of projectives(=injectives) is exactly $\widetilde{\mathcal{P}}$. By \cite[Corollary 7.4, Remark 7.5]{NP}, the ideal quotient $\widetilde{\mathcal{C}}=\mathcal{C}/[\widetilde{\mathcal{P}}]$ is a triangulated category. By Theorem \ref{main thm1} and \ref{main thm2}, $\tau$ induces an exact auto-equivalence on $\widetilde{\mathcal{C}}$. In this case, by Remark \ref{rem on exact functors} (2), it is just a triangulated auto-equivalence.
\end{proof}

We apply Proposition \ref{induced tri cat} to artin algebras and obtain the following corollary, which might be known to experts. Note that in \cite{ASo}, the authors discussed the properties of the subfunctors of ${\rm Ext}_{\Lambda}^{1}$. From their results, we can also deduce this corollary.

\begin{cor}\label{artin alg case}
	Let $\Lambda$ be an artin algebra. If the $\tau$-orbits of the indecomposable projecrive modules contain all indecomposable injective modules. Then the ideal quotient $\widetilde{{\rm mod}\Lambda}$ of ${\rm mod}\Lambda$ by the additive subcategory generated by these $\tau$-orbits is a triangulated category and $\tau$ induces a triangulated auto-equivalence on it.
\end{cor}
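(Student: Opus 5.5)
The plan is to reduce the statement to Proposition \ref{induced tri cat} applied to $\mathcal{C}={\rm mod}\Lambda$ with its standard exact structure $\mathbb{E}={\rm Ext}^{1}_{\Lambda}$. First I check the standing hypotheses of this section. ${\rm mod}\Lambda$ is an $R$-linear Ext-finite exact category, where $R$ is the center-related commutative artin ring over which $\Lambda$ is an artin algebra. It is Krull--Schmidt, hence idempotent complete and in particular weakly idempotent complete. It has enough projectives $\mathcal{Q}={\rm proj}\Lambda={\rm add}\Lambda$ and enough injectives $\mathcal{J}={\rm inj}\Lambda={\rm add}\,\mathbb{D}\Lambda$. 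Both are additive closures of a single object, so they are functorially finite, and ${\rm ind}\mathcal{Q}$, ${\rm ind}\mathcal{J}$ are finite sets.

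Next I need an ARS duality. The classical Auslander--Reiten formula gives ${\rm Ext}^{1}_{\Lambda}(B,\tau A)\simeq \mathbb{D}\underline{\rm Hom}_{\Lambda}(A,B)$ binaturally, with $\tau=D{\rm Tr}:\underline{{\rm mod}}\Lambda\to\overline{{\rm mod}}\Lambda$ an equivalence. This is exactly an ARS duality in the sense of \cite[Definition 3.4]{INP}, as recorded there. By Remark \ref{independent}, the notions of preprojective and preinjective do not depend on which representative of $\tau$ we use, so we may work with the usual $D{\rm Tr}$.

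The remaining hypothesis is that the preprojectives coincide with the preinjectives, i.e.\ $\widetilde{\mathcal{P}}=\widetilde{\mathcal{I}}$. I would derive this from the assumption that the $\tau$-orbits of the indecomposable projectives contain all indecomposable injectives, via a counting argument as in the proof of Proposition \ref{induced tri cat}.

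\begin{enumerate}
\item Each indecomposable injective $J_j$ has the form $\tau^{-s}Q_i$ for some $i$ and some $s\geq 0$. Since $\tau^{-1}$ of an injective vanishes, the orbit $Q_i,\tau^{-1}Q_i,\dots$ terminates at $J_j$. Hence each orbit contains at most one injective.
\item By the hypothesis, the injectives inject into the set of orbits of projectives, so $n\leq m$. Since ${\rm Tr}$ and $D$ exchange the roles of projectives and injectives, and $\Lambda$ and $\mathbb{D}\Lambda$ have the same number of indecomposable summands, $m=n$.
\item Therefore every orbit starting at a projective ends at an injective. Each indecomposable projective $Q_i$ is then $\tau^{r}J$ for an injective $J$, so $\widetilde{\mathcal{P}}\subseteq\widetilde{\mathcal{I}}$. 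Symmetrically $\widetilde{\mathcal{I}}\subseteq\widetilde{\mathcal{P}}$, since each injective lies in the $\tau^{-}$-orbit of a projective.
\end{enumerate}

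The ideal quotient in the statement is $\mathcal{C}/[\widetilde{\mathcal{P}}]$. Proposition \ref{induced tri cat} then yields that it is triangulated and that $\tau$ induces a triangulated auto-equivalence. The main obstacle is step 2, the orbit bookkeeping: one must make sure that ``$\tau$-orbit contains'' is read so that each injective is reached from a projective by applying $\tau^{-1}$ finitely many times. Once that is settled, the rest is a direct invocation of earlier results.
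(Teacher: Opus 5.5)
Your proposal is correct and takes the same route as the paper, which obtains this corollary simply by applying Proposition \ref{induced tri cat} to $({\rm mod}\Lambda,{\rm Ext}^{1}_{\Lambda})$ with its classical ARS duality. Your orbit-counting argument correctly supplies the verification that $\widetilde{\mathcal{P}}=\widetilde{\mathcal{I}}$, which the paper leaves implicit; it uses that the numbers of indecomposable projectives and indecomposable injectives agree, which holds automatically for an artin algebra, e.g.\ via the Nakayama functor.
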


\begin{rem}
	Self-injective algebras clearly satisfy Corollary \ref{artin alg case}. Thus algebras satisfying Corollary \ref{artin alg case} can be regarded as a generalization of self-injective algebra.
\end{rem}

\subsection{Invariance}

Suppose $(\mathcal{C},\mathbb{E},\mathfrak{s})$ has a right ARS duality $(\tau,\eta)$. Let $(\tau,\phi)$ be the exact functor obtained in Theorem \ref{main thm2}. In this section, we generalize a recent result in \cite{H}. Define $\eta_{A}:=\eta_{A,A}(\underline{{\rm id}_{A}})$. By Lemma \ref{equi def}, $\eta$ gives a non-degenerate bilinear form
\[\{-,-\}: \mathbb{E}(C,A)\times \overline{\mathcal{C}}(A,\tau C)\rightarrow J\]
which sends $(\delta,\overline{f})$ to $\{\delta,\overline{f}\}=\eta_{C}(\overline{f}_{*}\delta)$. Now for any $\delta \in \mathbb{E}^{\mathcal{Q}}(C,A) \subseteq \mathbb{E}(C,A)$ and $f\in \mathcal{C}(A,\tau C)$, consider $\phi(\delta)\in \mathbb{E}_{\mathcal{J}}(\tau C,\tau A) \subseteq \mathbb{E}(\tau C,\tau A)$ and $\tau(\underline{f})\in \overline{\mathcal{C}}(\tau A,\tau^{2}C)$. We show the following invariance.

\begin{thm}\label{invariance}
	$\{\delta,\overline{f}\}=\{\phi(\delta),\tau(\underline{f})\}$ holds.
\end{thm}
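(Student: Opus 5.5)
The plan is to compare both sides with a single intermediate quantity, $\eta_{A}(f^{*}\phi(\delta))$, using two facts already established. The first is the binaturality of $\eta$ in the form used throughout the paper: for $\underline{u}\in\underline{\mathcal{C}}(X,Y)$ and $\rho\in\mathbb{E}(Y,\tau X)$,
\[\eta_{X,Y}(\underline{u})(\rho)=\eta_{X}(\underline{u}^{*}\rho)=\eta_{Y}(\tau(\underline{u})_{*}\rho).\]
The second is the defining property of $\phi_{C,A}$ from the proof of Theorem \ref{main thm2}, isolated in the proof of Claim \ref{cliam} (i):
\[\eta_{A}([g]^{*}\phi_{C,A}(\delta))=\eta_{C}([g]_{*}\delta)\quad\text{for all }[g]\in\mathcal{C}/\mathcal{P}_{\mathbb{E}_{\mathcal{J}}}(A,\tau C).\]

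\emph{Step 1 (the right-hand side).} Unwind the definition of the bilinear form: $\{\phi(\delta),\tau(\underline{f})\}=\eta_{\tau C}(\tau(\underline{f})_{*}\phi(\delta))$. Apply binaturality with $X=A$, $Y=\tau C$, $\underline{u}=\underline{f}$ and $\rho=\phi_{C,A}(\delta)\in\mathbb{E}(\tau C,\tau A)$. Here we regard $\phi(\delta)\in\mathbb{E}_{\mathcal{J}}(\tau C,\tau A)\subseteq\mathbb{E}(\tau C,\tau A)$, which is legitimate because $\overline{\mathbb{E}_{\mathcal{J}}}=\mathbb{E}_{\mathcal{J}}$ on objects. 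This gives $\eta_{\tau C}(\tau(\underline{f})_{*}\phi(\delta))=\eta_{A}(\underline{f}^{*}\phi(\delta))$.

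\emph{Step 2 (the left-hand side).} Note that $\underline{f}^{*}\phi(\delta)=[f]^{*}\phi(\delta)$, where $[f]$ is the image of $f$ in $\mathcal{C}/\mathcal{P}_{\mathbb{E}_{\mathcal{J}}}$. This holds because the action of morphisms on $\mathbb{E}_{\mathcal{J}}(-,\tau A)$ factors through $\mathcal{C}/\mathcal{P}_{\mathbb{E}_{\mathcal{J}}}$. Invoke the defining property of $\phi$ with $[g]=[f]$ to obtain $\eta_{A}([f]^{*}\phi(\delta))=\eta_{C}([f]_{*}\delta)$. Since $\delta\in\mathbb{E}(C,A)$ and the covariant action on $\mathbb{E}(C,-)$ factors through $\overline{\mathcal{C}}$ by Lemma \ref{(co)stable cat}, we have $[f]_{*}\delta=f_{*}\delta=\overline{f}_{*}\delta$. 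Hence $\eta_{C}([f]_{*}\delta)=\eta_{C}(\overline{f}_{*}\delta)=\{\delta,\overline{f}\}$, and chaining Steps 1 and 2 gives the theorem.

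\emph{Main obstacle.} The only delicate point is bookkeeping about which quotient each morphism lives in, so that the defining identity of $\phi_{C,A}$ applies verbatim. In particular one must check that the identity $\eta_{A}([g]^{*}\phi_{C,A}(\delta))=\eta_{C}([g]_{*}\delta)$ really is what the chain of isomorphisms defining $\phi$ in part (1) of the proof of Theorem \ref{main thm2} encodes. That chain runs through $\mathbb{D}(\mathcal{C}/\mathcal{I}_{\mathbb{E}^{\mathcal{Q}}})(A,\tau C)$ and then Ext-finiteness. To check it, I would trace $\delta$ through that chain. Lemma \ref{equi def} sends $\delta$ to the functional $[g]\mapsto\eta_{C}([g]_{*}\delta)$. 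The isomorphism $\eta^{\mathbb{E}_{\mathcal{J}}}$ of Theorem \ref{main thm1} identifies a class $\epsilon\in\mathbb{E}_{\mathcal{J}}(\tau C,\tau A)$ with the functional $[g]\mapsto\eta_{A}([g]^{*}\epsilon)$. Matching these two descriptions yields exactly the required identity, which is also how the paper uses it in Claim \ref{cliam} and in part (4) of that proof.
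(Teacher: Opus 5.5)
Your proposal is correct and follows the same chain of equalities as the paper. You use binaturality of $\eta$ to trade $\tau(\underline{f})_{*}$ for $\underline{f}^{*}$, then identify $\underline{f}^{*}$ with $[f]^{*}$ on $\mathbb{E}_{\mathcal{J}}(\tau C,\tau A)$. Next you apply the defining identity $\eta_{A}([f]^{*}\phi_{C,A}(\delta))=\eta_{C}([f]_{*}\delta)$, and finally $[f]_{*}\delta=\overline{f}_{*}\delta$. Your check that this defining identity is exactly what the chain of isomorphisms in part (1) of the proof of Theorem \ref{main thm2} encodes is accurate, and it makes explicit what the paper's one-line justification leaves implicit.
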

\begin{proof}
	It suffices to show $\eta_{C}(\overline{f}_{*}\delta)=\eta_{\tau C}(\tau(\underline{f})_{*}\phi(\delta))$. Since we have
	\begin{align*}
		\eta_{\tau C}(\tau(\underline{f})_{*}\phi(\delta)) & = \eta_{A}(\underline{f}^{*}\phi(\delta)) && \text{ by the definition of } \eta \\
		& = \eta_{A}([f]^{*}\phi(\delta)) && [f]\in \mathcal{C}/\mathcal{P}_{\mathbb{E}_{\mathcal{J}}}(A,\tau C)=\mathcal{C}/\mathcal{I}_{\mathbb{E}^{\mathcal{Q}}}(A,\tau C) \text{ is the image of } \underline{f} \text{ and } \overline{f} \\
		& = \eta_{C}([f]_{*}\delta) && \text{ by the definition of } \phi \text{, see Theorem \ref{main thm2}} \\
		& = \eta_{C}(\overline{f}_{*}\delta).
	\end{align*}
	This finishes the proof.
\end{proof}

We apply this theorem to a finite dimensional hereditary $k$-algebra $\Lambda$ ($k$ is a field) and obtain (the dual of) the main result of \cite{H}. Note that in this case, the Auslander-Reiten translation $\tau$ is an endofunctor on ${\rm mod}\Lambda$ and is isomorphic to $\mathbb{D}{\rm Ext}_{\Lambda}^{1}(-,\Lambda)$ (cf. \cite[Ch. VII.1 Corollary 1.9]{ASS}).

\begin{cor}
	For any $C,A\in {\rm mod}\Lambda$ such that $A$ has no nonzero projective direct summands, consider the non-degenerate bilinear form
	\[\{-,-\}:{\rm Ext}_{\Lambda}^{1}(C,A)\times {\rm Hom}_{\Lambda}(A,\tau C)\rightarrow k\]
	given by the Auslander-Reiten formula ${\rm Ext}_{\Lambda}^{1}(C,A)\cong \mathbb{D}{\rm Hom}_{\Lambda}(A,\tau C)$. For any $\zeta \in {\rm Ext}_{\Lambda}^{1}(C,A)$, suppose it is represented by 
	\[0\rightarrow A\xrightarrow{a} B\xrightarrow{b} C\rightarrow 0,\]
	denote by $\tau(\zeta)$ the element in ${\rm Ext}_{\Lambda}^{1}(\tau C,\tau A)$ which is represented by the short exact sequence
	\begin{equation}\label{SES}
		0\rightarrow \tau A\xrightarrow{\tau(a)} \tau B\xrightarrow{\tau(b)} \tau C\rightarrow 0.
	\end{equation}
	Then for any $f\in {\rm Hom}_{\Lambda}(A,\tau C)$ the equality $\{\zeta,f\}=\{\tau(\zeta),\tau(f)\}$ holds.
\end{cor}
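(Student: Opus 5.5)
The plan is to derive the corollary from Theorem \ref{invariance}, applied to $\mathcal{C}={\rm mod}\Lambda$ with $\mathbb{E}={\rm Ext}^{1}_{\Lambda}$ and $R=k$.

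First we check the standing hypotheses of this section. The category ${\rm mod}\Lambda$ is abelian, hence weakly idempotent complete. It is Ext-finite. It has enough projectives $\mathcal{Q}={\rm add}\Lambda$ and enough injectives $\mathcal{J}={\rm add}\,\mathbb{D}\Lambda$, and both are functorially finite since each is ${\rm add}$ of a single module. The classical Auslander--Reiten formula supplies the right ARS duality $(\tau,\eta)$ with $\tau=\mathbb{D}{\rm Tr}$.

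Next we identify the relative pieces in the hereditary case.

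\emph{The subfunctor $\mathbb{E}^{\mathcal{Q}}$.} An extension $\zeta$ lies in $\mathbb{E}^{\mathcal{Q}}(C,A)$ iff every map $A\to P$ with $P$ projective extends to $B$. Since $A$ has no projective summands and $\Lambda$ is hereditary, ${\rm Hom}_\Lambda(A,\Lambda)=0$: the image of any map $A\to\Lambda$ is a submodule of a projective, hence projective, so it splits off $A$ and must vanish. Hence $\mathbb{E}^{\mathcal{Q}}(C,A)={\rm Ext}^{1}_\Lambda(C,A)$, and every $\zeta$ qualifies.

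\emph{The form on ${\rm Hom}$ rather than $\overline{\rm Hom}$.} We must check $\overline{\mathcal{C}}(A,\tau C)={\rm Hom}_\Lambda(A,\tau C)$. Since $\tau C=\mathbb{D}{\rm Ext}^{1}(C,\Lambda)$ has no injective summands in the hereditary case, any map $A\to I\to\tau C$ with $I$ injective has image that is injective (a quotient of an injective over a hereditary algebra). That image then splits off $\tau C$, so it is zero. Consequently $\mathcal{I}(A,\tau C)=0$, and the form of Theorem \ref{invariance} is exactly the Auslander--Reiten pairing $\{\zeta,f\}$.

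The main step, and the one I expect to be hardest, is identifying $\phi(\zeta)$ with $\tau(\zeta)$ as defined by \eqref{SES}, with $\tau$ understood as the genuine endofunctor $\mathbb{D}{\rm Ext}^{1}(-,\Lambda)$. We first show \eqref{SES} is exact. Apply ${\rm Hom}(-,\Lambda)$ to $0\to A\to B\to C\to 0$; since ${\rm Hom}(A,\Lambda)=0$ and $\Lambda$ is hereditary, we obtain a short exact sequence
\[0\to{\rm Ext}^{1}(C,\Lambda)\to{\rm Ext}^{1}(B,\Lambda)\to{\rm Ext}^{1}(A,\Lambda)\to0.\]
Dualizing gives \eqref{SES}. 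By Theorem \ref{main thm2}, $\phi(\zeta)$ is realized in $\overline{\mathcal{C}}$ by $\tau A\xrightarrow{\tau(\underline a)}\tau B\xrightarrow{\tau(\underline b)}\tau C$. On the other hand, $\tau(\zeta)$ is an extension realized by the same maps, and it lies in $\mathbb{E}_{\mathcal{J}}$ because $\tau C$ has no injective summands. Uniqueness of the extension is the delicate point, and I would handle it by the long exact sequence. Both $\phi(\zeta)$ and $\tau(\zeta)$ satisfy $\tau(b)^{*}(-)=0$ and arise as connecting maps in $\overline{\mathcal{C}}$, so they differ by an element of the form $g^{*}$ of something; alternatively one can compare them through the exact sequences in Claim \ref{cliam}. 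If exact identification fails, it suffices to show $\{\tau(\zeta),\tau(f)\}=\{\phi(\zeta),\tau(f)\}$. For this I would try to show $\tau(\zeta)-\phi(\zeta)$ pairs to zero against the images $\tau(f)$ of maps $A\to\tau C$. Since $\tau$ is full here (we are modulo projectives/injectives, and there are no such summands), these images exhaust ${\rm Hom}(\tau A,\tau^{2}C)$, so the difference would vanish by non-degeneracy.

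Finally, we conclude by Theorem \ref{invariance}: $\{\zeta,f\}=\{\phi(\zeta),\tau(\underline f)\}=\{\tau(\zeta),\tau(f)\}$.
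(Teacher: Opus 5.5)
\textbf{There is a genuine gap at the main step.}

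Your preliminary reductions are sound:
\begin{itemize}
\item $\zeta\in\mathbb{E}^{{\rm add}\Lambda}(C,A)$ because ${\rm Hom}_\Lambda(A,\Lambda)=0$;
\item $\overline{\rm Hom}_\Lambda(A,\tau C)={\rm Hom}_\Lambda(A,\tau C)$;
\item your proof that \eqref{SES} is exact is correct (apply ${\rm Hom}_\Lambda(-,\Lambda)$, use ${\rm Hom}_\Lambda(A,\Lambda)=0$ and ${\rm Ext}^2_\Lambda=0$, then dualize). The paper only obtains this implicitly.
\end{itemize}

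The gap is the step you flag as hardest: the identification $\phi(\zeta)=\tau(\zeta)$. This is the whole content of the corollary beyond Theorem \ref{invariance}, and you do not prove it.

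\begin{itemize}
\item Theorem \ref{main thm2} only says that $\phi(\zeta)$ is realized by $(\overline{\tau(a)},\overline{\tau(b)})$ in the ideal quotient $(\overline{{\rm mod}}\Lambda,\overline{\mathbb{E}_{\mathcal{J}}},\overline{\mathfrak{s}|_{\mathbb{E}_{\mathcal{J}}}})$. In an extriangulated category the realization does not determine the extension. This is especially so in such a quotient, where morphisms are only known modulo maps factoring through injectives. So ``realized by the same maps'' proves nothing yet.
\item Your remark about ``$g^{*}$ of something'' is not carried out. The long exact sequence only gives $\phi(\zeta)=g_{*}\tau(\zeta)$ for some $g\in{\rm End}_\Lambda(\tau A)$, with no reason for $g$ to act as the identity.
\item Your fallback is circular. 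Since $f\mapsto\tau(f)$ is onto ${\rm Hom}_\Lambda(\tau A,\tau^2C)$ and the form is non-degenerate, the equality $\{\tau(\zeta),\tau(f)\}=\{\phi(\zeta),\tau(f)\}$ for all $f$ is \emph{equivalent} to $\tau(\zeta)=\phi(\zeta)$. You give no argument that the difference pairs to zero; you assume it and then invoke non-degeneracy.
\end{itemize}

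\textbf{The missing ingredient} is a hereditary rigidity fact that lets you lift the comparison from $\overline{{\rm mod}}\Lambda$ back to ${\rm mod}\Lambda$. Over a hereditary algebra, quotients of injectives are injective. Hence every morphism that factors through an injective and ends in a module without injective summands is zero.

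The paper uses this as follows.
\begin{itemize}
\item Realize $\phi(\zeta)$ by $0\to\tau A\xrightarrow{c}X\xrightarrow{d}\tau C\to 0$.
\item Theorem \ref{main thm2} gives an isomorphism $X\cong\tau B$ in $\overline{{\rm mod}}\Lambda$, compatible with the identities of $\tau A$ and $\tau C$.
\item $X$ has no injective summands: such a summand maps to zero in $\tau C$, hence lies in $\tau A$ and would split off there.
\item By rigidity, the diagram commutes in ${\rm mod}\Lambda$ and the middle map is a genuine isomorphism. So the two short exact sequences are equivalent and $\phi(\zeta)=\tau(\zeta)$.
\end{itemize}

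With your exactness of \eqref{SES} you could argue equally directly. Both $\tau(\zeta)$ and $\phi(\zeta)$ are realized by $(\overline{\tau(a)},\overline{\tau(b)})$. Applying (ET3) in the quotient to the identities of $\tau A$ and $\tau B$ gives $c\in{\rm End}_\Lambda(\tau C)$ with $\tau(\zeta)=c^{*}\phi(\zeta)$, and with $c\,\tau(b)-\tau(b)$ factoring through an injective. Rigidity forces $c\,\tau(b)=\tau(b)$, and surjectivity of $\tau(b)$ gives $c=1$.

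Either way, once $\phi(\zeta)=\tau(\zeta)$ is established, your final chain of equalities is correct. You should also note ${\rm Hom}_\Lambda(\tau A,\tau^2C)=\overline{\rm Hom}_\Lambda(\tau A,\tau^2C)$, so that $\tau(\underline{f})$ is literally $\tau(f)$.
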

\begin{proof}
	We regard ${\rm mod}\Lambda$ as an extriangulated category $({\rm mod}\Lambda,\mathbb{E}:={\rm Ext}_{\Lambda}^{1},{\rm id})$. Since $A$ has no nonzero projective direct summands, ${\rm Hom}_{\Lambda}(A,P)=0$ for any projective $P$. Indeed, for any $g:A\rightarrow P$, the image ${\rm Im}g$ is projective. It is also a projective direct summand of $A$. Therefore ${\rm Im}g=0$. Thus $\zeta \in {\rm Ext}_{\Lambda}^{1}(C,A)$ belongs to $\mathbb{E}^{{\rm add}\Lambda}(C,A)$. Similarly, $\tau(\zeta)\in \mathbb{E}_{{\rm add}\mathbb{D}\Lambda}(\tau C,\tau A)$ holds. Note that we have ${\rm Hom}_{\Lambda}(A,\tau C)=\underline{\rm Hom}_{\Lambda}(A,\tau C)=\overline{\rm Hom}_{\Lambda}(A,\tau C)$ and ${\rm Hom}_{\Lambda}(\tau A,\tau^{2} C)=\overline{\rm Hom}_{\Lambda}(\tau A,\tau^{2} C)$. Thus $f=\overline{f}=\underline{f}$ and $\tau(\underline{f})=\tau(f)\in {\rm Hom}_{\Lambda}(\tau A,\tau^{2}C)$. According to Theorem \ref{invariance}, it suffices to show $\phi(\zeta)\in \mathbb{E}_{{\rm add}\mathbb{D}\Lambda}(\tau C,\tau A)\subseteq {\rm Ext}_{\Lambda}^{1}(\tau C,\tau A)$ is represented by (\ref{SES}). Suppose it is represented by
	\[0\rightarrow \tau A\xrightarrow{c} X\xrightarrow{d} \tau C \rightarrow 0.\]
	By Theorem \ref{main thm2}, there is a commutative diagram in $\overline{\rm mod}\Lambda$ as follows.
	\[\begin{tikzcd}
		\tau A \arrow[r,"\overline{c}"] \arrow[d,equal] & X \arrow[r,"\overline{d}"] \arrow[d,"\cong"] & \tau C \arrow[d,equal] \\
		\tau A \arrow[r,"\overline{\tau(a)}"] & \tau B \arrow[r,"\overline{\tau(b)}"] & \tau C
	\end{tikzcd}\]
	Note that $X\in {\rm mod}\Lambda$ has no nonzero injective direct summands. Indeed, if $I$ is a nonzero injective direct summand of $X$, then it is also a direct summand of $\tau A$, a contradiction. Thus $X\cong \tau B$ in ${\rm mod}\Lambda$. This completes the proof.
\end{proof}

\begin{rem}
	These results indicate that the invariance of the Auslander-Reiten formula under the Auslander-Reiten translation for $\Lambda$ is essentially a categorical property and does not depend on specific algebras. Heredity just simplifies the expressions. 
\end{rem}

\subsection{Examples}\label{examples}

In this subsection we give some examples of Theorem \ref{main thm1}, \ref{main thm2}, Proposition \ref{tau preserve almost split extension} and Corollary \ref{artin alg case}.

\subsubsection*{Hereditary algebra}

Let $\Lambda$ be a finite dimensional hereditary $k$-algebra ($k$ is a field). Denote by ${\rm mod}_{p}\Lambda$ (resp. ${\rm mod}_{i}\Lambda$) the full subcategory of ${\rm mod}\Lambda$ consisting of modules without nonzero projective (resp. injective) direct summands. Then we have canonical inclusion functors
\[{\rm inc}_{p}:{\rm mod}_{p}\Lambda \hookrightarrow {\rm mod}\Lambda \text{ and } {\rm inc}_{i}:{\rm mod}_{i}\Lambda \hookrightarrow {\rm mod}\Lambda.\]
Denote by $\pi_{p}$ (resp. $\pi_{i}$) the canonical projection functor ${\rm mod}\Lambda \rightarrow \underline{\rm mod}\Lambda$ (resp. ${\rm mod}\Lambda \rightarrow \overline{\rm mod}\Lambda$).

\begin{prop}
	The composition of functors $\pi_{p}\circ {\rm inc}_{p}$ and $\pi_{i}\circ {\rm inc}_{i}$ are exact equivalences between exact categories. Thus the exact equivalence $\tau: {\rm mod}_{p}\Lambda \rightarrow {\rm mod}_{i}\Lambda$ implies that $\tau:\underline{\rm mod}\Lambda \rightarrow \overline{\rm mod}\Lambda$ is also an exact equivalence between exact categories.
\end{prop}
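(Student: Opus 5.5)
We have to show three things: $\pi_{p}\circ {\rm inc}_{p}$ is an equivalence of categories, it is exact for the appropriate exact structures, and the same holds for $\pi_{i}\circ {\rm inc}_{i}$. The exact structures are the ones from Theorem \ref{main thm2}: on $\underline{\rm mod}\Lambda$ we take $\underline{\mathbb{E}^{\mathcal{Q}}}$ with $\mathcal{Q}={\rm add}\Lambda$, and on $\overline{\rm mod}\Lambda$ we take $\overline{\mathbb{E}_{\mathcal{J}}}$ with $\mathcal{J}={\rm add}\mathbb{D}\Lambda$. The category ${\rm mod}_{p}\Lambda$ is extension-closed in ${\rm mod}\Lambda$ because $\Lambda$ is hereditary: a module $M$ lies in ${\rm mod}_{p}\Lambda$ iff ${\rm Hom}_{\Lambda}(M,\Lambda)=0$, using the image argument from the preceding corollary. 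So ${\rm mod}_{p}\Lambda$ is an exact category with the induced ${\rm Ext}^{1}$. The argument for ${\rm mod}_{i}\Lambda$ is dual throughout, so I only describe the projective case.

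\emph{Equivalence.} Every module decomposes as $M\cong M_{p}\oplus P$ with $M_{p}\in{\rm mod}_{p}\Lambda$ and $P$ projective, and $M\cong M_{p}$ in $\underline{\rm mod}\Lambda$. This gives density. For faithfulness, if $f:M\to N$ with $M,N\in{\rm mod}_{p}\Lambda$ factors through a projective, then $f$ factors through some $M\to P$, and this map is zero because ${\rm Hom}(M,\Lambda)=0$. Hence $f=0$. Fullness is clear, so $\pi_{p}\circ{\rm inc}_{p}$ is an equivalence.

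\emph{Exactness.} Here the natural transformation is the identity on extension groups. For $M,N\in{\rm mod}_{p}\Lambda$, every $\delta\in{\rm Ext}^{1}(M,N)$ lies in $\mathbb{E}^{\mathcal{Q}}(M,N)$, since $(\delta^{\sharp})_{Q}$ is defined on ${\rm Hom}(N,Q)=0$. Thus ${\rm Ext}^{1}(M,N)=\underline{\mathbb{E}^{\mathcal{Q}}}(M,N)$. A short exact sequence in ${\rm mod}_{p}\Lambda$ is sent to the same sequence, which by Lemma \ref{induced extri str on ideal quotient} is an $\underline{\mathfrak{s}|_{\mathbb{E}^{\mathcal{Q}}}}$-triangle. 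Naturality is immediate, and a quasi-inverse is exact by Remark \ref{rem on exact functors}(1). Moreover, $\underline{\rm mod}\Lambda$ with this structure is an exact category, being exactly equivalent to one.

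\emph{Consequence for $\tau$.} For hereditary $\Lambda$ the functor $\tau=\mathbb{D}{\rm Ext}^{1}(-,\Lambda)$ restricts to an equivalence ${\rm mod}_{p}\Lambda\to{\rm mod}_{i}\Lambda$, and it is exact there. To see exactness, note that ${\rm Ext}^{1}(-,\Lambda)$ is exact on ${\rm mod}_{p}\Lambda$ because ${\rm Hom}(-,\Lambda)$ vanishes on it and ${\rm Ext}^{2}=0$. We then get a square in which $\pi_{i}{\rm inc}_{i}\circ\tau$ is isomorphic to $\tau\circ\pi_{p}{\rm inc}_{p}$ (the ARS $\tau$ on $\underline{\rm mod}\Lambda$); by Proposition \ref{uniqueness of right ARS} the choice of ARS $\tau$ does not matter here. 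Composing exact equivalences shows that $\tau:\underline{\rm mod}\Lambda\to\overline{\rm mod}\Lambda$ is an exact equivalence. Alternatively, the exactness can be read off directly from Theorem \ref{main thm2} together with density.

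\emph{Main obstacle.} The hardest part is making the last square commute as exact functors, i.e.\ checking that the extension maps agree. The identification $\phi$ is defined abstractly via $\eta$, so it is not obviously "apply $\tau$ to the short exact sequence". I would resolve this with the argument of the preceding corollary: $\phi(\zeta)$ is represented by a sequence whose middle term is isomorphic to $\tau B$ in $\overline{\rm mod}\Lambda$ and which has no injective summands. Hence $\phi(\zeta)$ coincides with the class of $0\to\tau A\to\tau B\to\tau C\to0$.
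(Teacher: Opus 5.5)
Your proposal is correct and follows essentially the same route as the paper: you identify $\underline{\rm mod}\Lambda$ (resp.\ $\overline{\rm mod}\Lambda$) with the extension-closed subcategory ${\rm mod}_{p}\Lambda$ (resp.\ ${\rm mod}_{i}\Lambda$) via the identity on ${\rm Ext}^{1}$, and then conjugate the exact equivalence $\tau:{\rm mod}_{p}\Lambda\to{\rm mod}_{i}\Lambda$ by these equivalences; the paper instead cites \cite{ARS} for the equivalence and simply asserts that $\tau$ is exact on ${\rm mod}_{p}\Lambda$, whereas you prove both. The compatibility with the $\phi$ of Theorem~\ref{main thm2}, which you flag as the main obstacle, is not needed for the statement, because an exact structure transports along the natural isomorphism $\pi_{i}{\rm inc}_{i}\circ\tau\cong\tau\circ\pi_{p}{\rm inc}_{p}$; the paper does not address it either.
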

\begin{proof}
	Recall that by \cite[Ch IV.1 Proposition 1.15]{ARS}, the two compositions are already equivalences of categories. It remains to show the exactness. First we claim that ${\rm mod}_{p}\Lambda$ is extension-closed in ${\rm mod}\Lambda$. Let
	\[0\rightarrow A\rightarrow B\rightarrow C\rightarrow 0\]
	be a short exact sequence in which $A,C\in {\rm mod}_{p}\Lambda$. Suppose $Q$ is a projective direct summand of $B$. Since ${\rm Hom}_{\Lambda}(A,Q)=0$, $Q$ is also a direct summand of $C$. Thus $Q=0$. Regard ${\rm mod}\Lambda$ as an extriangulated category $({\rm mod}\Lambda, \mathbb{E}, {\rm id})$ where $\mathbb{E}:={\rm Ext}_{\Lambda}^{1}$. By Example \ref{typical exam of subfunctor} and Lemma \ref{induced extri str on ideal quotient}, $\underline{\rm mod}\Lambda$ is also an extriangulated category
	\[(\underline{\rm mod}\Lambda,\underline{\mathbb{E}^{{\rm add}\Lambda}},\underline{\rm id}).\]
	Note that $\mathbb{E}(C,A)=\mathbb{E}^{{\rm add}\Lambda}(C,A)=\underline{\mathbb{E}^{{\rm add}\Lambda}}(C,A)$ for any $C,A\in {\rm mod}_{p}\Lambda$. Then $\pi_{p}\circ {\rm inc}_{p}$ is an exact equivalence between extriangulated categories. Since ${\rm mod}_{p}\Lambda$ is an exact category, so is $\underline{\rm mod}\Lambda$. Similarly, $\pi_{i}\circ {\rm inc}_{i}$ is also an exact equivalence. Since the functor $\tau:{\rm mod}\Lambda \rightarrow {\rm mod}\Lambda$ restricts to an exact equivalence $\tau:{\rm mod}_{p}\Lambda \rightarrow {\rm mod}_{i}\Lambda$, composing it with $\pi_{i}\circ {\rm inc}_{i}$ and a quasi-inverse of $\pi_{p}\circ {\rm inc}_{p}$, we deduce that $\tau:\underline{\rm mod}\Lambda \rightarrow \overline{\rm mod}\Lambda$ is also an exact equivalence.
\end{proof}

\subsubsection*{The category $K^{[-m,0]}({\rm proj}\Lambda)$}

Let $\Lambda$ be a finite dimensional $k$-algebra ($k$ is a field). Denote by $K^{[-m,0]}({\rm proj}\Lambda)$ the full subcategory of $K^{b}({\rm proj}\Lambda)$ consisting of $(m+1)$-term complexes, that is, complexes concentrated in degree $[-m,0]$. It is extension-closed in $K^{b}({\rm proj}\Lambda)$ and thus has a natural extriangulated structure (cf. \cite[Remark 2.18]{NP}) which makes it an $(m-1)$-Auslander extriangulated category (cf. \cite{GNP}) with enough projectives ${\rm add}\Lambda$ and enough injectives $({\rm add}\Lambda)[m]$. Similarly, we have $K^{[-m+1,1]}({\rm inj}\Lambda)$.  There is an equivalence of categories
\[[-1] \circ \nu:K^{[-m,0]}({\rm proj}\Lambda) \rightarrow K^{[-m+1,1]}({\rm inj}\Lambda)\]
where $\nu=\mathbb{D}{\rm Hom}_{\Lambda}(-,\Lambda)$ is the Nakayama functor. In the bounded derived category $D^{b}({\rm mod}\Lambda)$, denote by $(\mathcal{D}^{\leq 0},\mathcal{D}^{\geq 0})$ the canonical $t$-structure and
\[\sigma^{\leq 0}:D^{b}({\rm mod}\Lambda)\rightarrow \mathcal{D}^{\leq 0} \text{ and } \sigma^{\geq 0}:D^{b}({\rm mod}\Lambda)\rightarrow \mathcal{D}^{\geq 0}\]
the canonical truncation functors. Zhou \cite{Zhou} defined the $m$-extended module category $m\text{-mod}\Lambda:=\mathcal{D}^{\leq 0}\cap \mathcal{D}^{\geq -m+1}$. It is an extriangulated category with enough projectives ${\rm proj}\Lambda$ and enough injectives $({\rm inj}\Lambda)[m-1]$ (see \cite[Lemma 3.1]{Zhou}). Gupta \cite[Proposition 3.1]{Gupta} (see also Zhou \cite[Proposition 3.10]{Zhou}) showed that the functor $H^{[-m+1,0]}:=\sigma^{\leq 0}\circ \sigma^{\geq -m+1}$ from $K^{[-m,0]}({\rm proj}\Lambda)$ (resp. $K^{[-m+1,1]}({\rm inj}\Lambda)$) to $m\text{-mod}\Lambda$ induces an equivalence of categories
\begin{equation}\label{equi of cat by Gupta}
	K^{[-m,0]}({\rm proj}\Lambda)/({\rm add}\Lambda)[m]\xrightarrow{\sim} m\text{-mod}\Lambda \,(\text{resp. }K^{[-m+1,1]}({\rm inj}\Lambda)/({\rm add}\nu \Lambda)[-1]\xrightarrow{\sim} m\text{-mod}\Lambda).
\end{equation}
The quasi-inverse is given by taking a minimal projective (resp. injective) presentation (see the proof of \cite[Proposition 3.10]{Zhou}). More precisely, for any $Z^{\bullet} \in m\text{-mod}\Lambda$, denote by $\textbf{p}Z^{\bullet}$ (resp. $\textbf{i}Z^{\bullet}$) its minimal projective (resp. injective) resolution. The minimal projective (resp. injective) presentation is defined by the brutal truncation $\textbf{p}_{m}(Z^{\bullet}):=\sigma_{\geq -m}(\textbf{p}Z^{\bullet})$ (resp. $\textbf{i}_{m}(Z^{\bullet}):=\sigma_{\leq 1}(\textbf{i}Z^{\bullet})$). 

\begin{lem}\label{exact equivalence}
	The equivalences in (\ref{equi of cat by Gupta}) are exact equivalences between extriangulated categories.
\end{lem}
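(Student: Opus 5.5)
We have to show that the functor $H^{[-m+1,0]}$ induces exact equivalences $K^{[-m,0]}({\rm proj}\Lambda)/({\rm add}\Lambda)[m]\xrightarrow{\sim} m\text{-mod}\Lambda$, and dually for $K^{[-m+1,1]}({\rm inj}\Lambda)$. We treat only the projective case; the injective one is dual. Write $\mathcal{K}:=K^{[-m,0]}({\rm proj}\Lambda)$ with $\mathbb{E}_{\mathcal{K}}(X,Y)={\rm Hom}_{K^b}(X,Y[1])$, and $\mathcal{M}:=m\text{-mod}\Lambda$ with $\mathbb{E}_{\mathcal{M}}(Z,W)={\rm Hom}_{D^b}(Z,W[1])$. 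Both structures are inherited from extension-closed subcategories of triangulated categories, and $K^b({\rm proj}\Lambda)\hookrightarrow D^b({\rm mod}\Lambda)$ is fully faithful.

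\textbf{Step 1: the quotient is extriangulated.} The objects of $({\rm add}\Lambda)[m]$ are the injectives of $\mathcal{K}$. We check that they are also projective in $\mathcal{K}$: for $X\in\mathcal{K}$ we have ${\rm Hom}_{K^b}(P[m],X[1])=0$, because $X[1]$ is concentrated in degrees $[-m-1,-1]$ while $P[m]$ sits in degree $-m$, and a chain map is forced to be null-homotopic (as is standard for $(m+1)$-term complexes). Lemma \ref{induced extri str on ideal quotient} then makes $\mathcal{K}/[({\rm add}\Lambda)[m]]$ extriangulated, with $\widetilde{\mathbb{E}}=\mathbb{E}_{\mathcal{K}}$.

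\textbf{Step 2: the natural transformation.} For $X,Y\in\mathcal{K}$ define $\phi_{X,Y}\colon{\rm Hom}_{D^b}(X,Y[1])\to{\rm Hom}_{D^b}(HX,HY[1])$ by using the truncation triangles. Since $X\in\mathcal{D}^{\leq 0}$, we have $HX=\sigma^{\geq -m+1}X$. Put $X':=\sigma^{\leq -m}X$, so $X'=H^{-m}(X)[m]$ and there is a triangle $X'\to X\xrightarrow{p_X} HX\to X'[1]$. The key facts are:
\begin{enumerate}
\item ${\rm Hom}(X',HY[1])=0$ and ${\rm Hom}(X'[1],HY[1])=0$, since $HY[1]\in\mathcal{D}^{\geq -m}$ and one argues degree-wise;
\item ${\rm Hom}(X,\sigma^{\leq -m}Y[1])=0$, since $X$ is a complex of projectives in degrees $\geq -m$ and the target lives in $\mathcal{D}^{\leq -m-1}$.
\end{enumerate}
From (1), $p_X^*$ gives an isomorphism ${\rm Hom}(HX,HY[1])\cong{\rm Hom}(X,HY[1])$. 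From (2), $(p_Y[1])_*$ gives an injection ${\rm Hom}(X,Y[1])\hookrightarrow{\rm Hom}(X,HY[1])$. So the natural candidate is $\phi:=(p_X^*)^{-1}\circ(p_Y[1])_*$. Equivalently, $\phi(\delta)$ is the unique morphism with $\phi(\delta)\circ p_X=p_Y[1]\circ\delta$; this is exactly the functoriality of $\sigma^{\geq -m+1}$ on morphisms $X\to Y[1]$, which exists because $\sigma^{\geq}$ commutes with shift up to the index change. Naturality is then automatic.

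\textbf{Step 3: bijectivity of $\phi$.} We also need $\phi$ to be surjective, i.e.\ that every map $X\to HY[1]$ lifts through $p_Y[1]$. The obstruction lies in ${\rm Hom}(X,\sigma^{\leq -m}Y[2])={\rm Hom}(X,H^{-m}(Y)[m+2])$. This group vanishes because $X$ is a complex of projectives concentrated in degrees $\geq -m$, while the target is a module placed in degree $-m-2$. Together with Step 2, $\phi$ is an isomorphism.

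\textbf{Step 4: exactness.} A conflation $X\xrightarrow{x}Y\xrightarrow{y}W\stackrel{\delta}\dashrightarrow$ in $\mathcal{K}$ is a triangle in $K^b$. Applying the functor $\sigma^{\geq -m+1}$ does not produce triangles in general, so we argue as follows. Complete $HX\xrightarrow{Hx}HY$ and compare it with the triangle realizing $\phi(\delta)$ in $\mathcal{M}$. Then use the $3\times 3$ lemma on the truncation triangles: the triangle $X'\to Y'\to W'$ built from the $\sigma^{\leq -m}$-parts, all of which are modules shifted by $[m]$, is sent to a left exact sequence of $H^{-m}$'s. This shows that $HX\to HY\to HW\stackrel{\phi(\delta)}\dashrightarrow$ is a triangle in $D^b$.

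We expect this step to be the main obstacle: controlling how the cohomology in degree $-m$ interacts with the octahedral axiom. A cleaner alternative is to invoke the ideal-quotient description, using that the quasi-inverse is given by the minimal presentation $\mathbf{p}_m$ and checking that $\mathbf{p}_m$ sends triangles in $\mathcal{M}$ to triangles in $\mathcal{K}$ via a horseshoe-type construction of projective resolutions. By Remark \ref{rem on exact functors}(1), together with the fact that $H$ is an equivalence on underlying categories, the exactness of one of the two functors combined with $\phi$ being an isomorphism suffices.
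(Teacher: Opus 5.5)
There is a genuine gap, and it begins with Step 1, which is false. The objects of $({\rm add}\Lambda)[m]$ are injective in $(\mathcal{K},\mathbb{E}_{\mathcal{K}})$ but not projective. Degree $-m$ lies inside $[-m-1,-1]$, so there is no degree reason for vanishing. A chain map $P[m]\to X[1]$ is a map $P\to X^{-m+1}$ killed by the differential, and homotopies only remove those factoring through $d_X^{-m}$. Hence ${\rm Hom}_{K^b}(P[m],X[1])\cong{\rm Hom}_\Lambda(P,H^{-m+1}(X))$, which is ${\rm End}_\Lambda(\Lambda)\neq 0$ for $P=\Lambda$, $X=\Lambda[m-1]$. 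So Lemma \ref{induced extri str on ideal quotient} does not apply, and $\mathbb{E}_{\mathcal{K}}$ does not even descend to the quotient.

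The same error recurs in Step 2: the first half of your fact (1) fails, since ${\rm Hom}(\sigma^{\leq -m}X,HY[1])\cong{\rm Hom}_\Lambda(H^{-m}X,H^{-m+1}Y)$. Thus $p_X^{*}$ is injective but not surjective, and $\phi=(p_X^{*})^{-1}\circ(p_Y[1])_{*}$ is only defined on those $\delta$ with $p_Y[1]\circ\delta\circ i_X=0$, i.e.\ with $H^{-m}(\delta)=0$. Functoriality of truncation does not rescue this, because $\sigma^{\geq -m+1}(Y[1])=(\sigma^{\geq -m+2}Y)[1]\neq HY[1]$. Concretely, take $m=1$, $X=\Lambda[1]$, $Y=\Lambda$. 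Then $\mathbb{E}_{\mathcal{K}}(X,Y)\cong{\rm End}_\Lambda(\Lambda)\neq 0$, while ${\rm Hom}_{D^b}(H^0X,H^0Y[1])=0$. The conflation $\Lambda\to 0\to\Lambda[1]\dashrightarrow$ is sent to $\Lambda\to 0\to 0$, which is not a conflation in ${\rm mod}\Lambda$. So both the bijectivity in Step 3 and the exactness in Step 4 are false for the structure you chose.

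The missing idea, which is what the paper does, is to put on the quotient the structure coming from the relative theory $\mathbb{E}_{({\rm add}\Lambda)[m]}$ of Example \ref{typical exam of subfunctor}. There $({\rm add}\Lambda)[m]$ becomes projective while remaining injective. Take $\delta\colon X\to Y[1]$ in this subfunctor and a projective cover $P\to\ker d_X^{-m}$. The composite $P[m]\to X\xrightarrow{\delta}Y[1]$ is null-homotopic, which forces $\delta^{-m}(\ker d_X^{-m})\subseteq{\rm im}\,d_Y^{-m}$. That is, $H^{-m}(\delta)=0$, equivalently $p_Y[1]\circ\delta\circ i_X=0$. Conversely, $H^{-m}(\delta)=0$ implies $\delta\in\mathbb{E}_{({\rm add}\Lambda)[m]}(X,Y)$, because $H^{-m}$ identifies ${\rm Hom}_{K^b}(P[m],Y[1])$ with ${\rm Hom}_\Lambda(P,H^{-m+1}Y)$; this direction is needed for surjectivity.

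With this restriction, the correct half of your fact (1) (${\rm Hom}(X'[1],HY[1])=0$) gives existence and uniqueness of $\phi(\delta)$. Your fact (2) and Step 3, i.e.\ ${\rm Hom}(X,\sigma^{\leq -m}Y[j])=0$ for $j=1,2$, are correct and are exactly the vanishings the paper uses; they then give bijectivity. The same condition is what your Step 4 lacks. In its notation, it makes $0\to H^{-m}X\to H^{-m}Y\to H^{-m}W\to 0$ exact rather than merely left exact, equivalently $H^{-m+1}X\to H^{-m+1}Y$ injective. Without this, $HX\to HY\to HW$ cannot underlie a triangle. The paper then obtains the triangle with connecting morphism $\phi(\delta)$ from the $3\times 3$ lemma. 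It applies the lemma to the triangle of the conflation and the morphism of truncation triangles given by $(\delta,\phi(\delta))$.
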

\begin{proof}
	We only show the first one. $K^{[-m,0]}({\rm proj}\Lambda)$ (resp. $m\text{-mod}\Lambda$) is an extriangulated category in which $\mathbb{E}(-,-)={\rm Hom}_{K^{b}({\rm proj}\Lambda)}(-,-[1])$ (resp. $\mathbb{F}(-,-)={\rm Hom}_{D^{b}({\rm mod}\Lambda)}(-,-[1])$). Consider the closed subfunctor $\mathbb{E}_{({\rm add}\Lambda)[m]}$ of $\mathbb{E}$ (see Example \ref{typical exam of subfunctor}) which makes the ideal quotient $K^{[-m,0]}({\rm proj}\Lambda)/({\rm add}\Lambda)[m]$ an extriangulated category by Lemma \ref{induced extri str on ideal quotient}. We define a functorial isomorphism
	\[\phi:\mathbb{E}_{({\rm add}\Lambda)[m]}(Y^{\bullet},X^{\bullet})\rightarrow \mathbb{F}(H^{[-m+1,0]}(Y^{\bullet}),H^{[-m+1,0]}(X^{\bullet})) \text{ for any } Y^{\bullet},X^{\bullet}\in K^{[-m,0]}({\rm proj}\Lambda)\]
	as follows. For any $\delta\in \mathbb{E}_{({\rm add}\Lambda)[m]}(Y^{\bullet},X^{\bullet})$, Consider the following diagram in $D^{b}({\rm mod}\Lambda)$,
	\begin{equation}\label{morphism of triangles}
		\begin{tikzcd}
			\sigma^{\leq -m}(Y^{\bullet}) \arrow[r,"i"] & Y^{\bullet} \arrow[d,"\delta"] \arrow[r] & H^{[-m+1,0]}(Y^{\bullet}) \arrow[r] \arrow[d,dashed,"\phi(\delta)"] & \sigma^{\leq -m}(Y^{\bullet})[1] \\
			\sigma^{\leq -m}(X^{\bullet})[1] \arrow[r] & X^{\bullet}[1] \arrow[r,"p"] & H^{[-m+1,0]}(X^{\bullet})[1] \arrow[r] & \sigma^{\leq -m}(X^{\bullet})[2]
		\end{tikzcd}
	\end{equation}
	in which each row is a triangle. We show the composition $p \circ \delta \circ i$ is zero. Consider chain maps
	\[\begin{tikzcd}
		\cdots \arrow[r] & 0 \arrow[r] \arrow[d] & 0 \arrow[r] \arrow[d] & P \arrow[r] \arrow[d,"c"] \arrow[dddl,dashed,"d" swap] & 0 \arrow[r] \arrow[d] & \cdots \arrow[r] & 0 \arrow[r] \arrow[d] & 0 \arrow[r] \arrow[d] & \cdots \\
		\cdots \arrow[r] & 0 \arrow[r] \arrow[d] & 0 \arrow[r] \arrow[d] & \text{ker}a \arrow[r] \arrow[d,"i^{-m}"] & 0 \arrow[r] \arrow[d] & \cdots \arrow[r] & 0 \arrow[r] \arrow[d] & 0 \arrow[r] \arrow[d] & \cdots \\
		\cdots \arrow[r] & 0 \arrow[r] \arrow[d] & 0 \arrow[r] \arrow[d] & Y^{-m} \arrow[r,"a"] \arrow[d,"\delta^{-m}"] & Y^{-m+1} \arrow[r] \arrow[d] & \cdots \arrow[r] & Y^{-1} \arrow[r] \arrow[d] & Y^{0} \arrow[r] \arrow[d] & \cdots \\
		\cdots \arrow[r] & 0 \arrow[r] \arrow[d] & X^{-m} \arrow[r,"b"] \arrow[d] & X^{-m+1} \arrow[r] \arrow[d,"p^{-m}"] & X^{-m+2} \arrow[r] \arrow[d,equal] & \cdots \arrow[r] & X^{0} \arrow[r] \arrow[d,equal] & 0 \arrow[r] \arrow[d] & \cdots \\
		\cdots \arrow[r] & 0 \arrow[r] & 0 \arrow[r] & \text{coker}b \arrow[r] & X^{-m+2} \arrow[r] & \cdots \arrow[r] & X^{0} \arrow[r] & 0 \arrow[r] & \cdots
	\end{tikzcd}\]
	in which $P \xrightarrow{c} {\rm ker}a$ is a projective cover. By the definition of $\delta$, there exists a morphism $d:P\rightarrow X^{-m}$ satisfying $bd=\delta^{-m} i^{-m} c$. Then we have $p^{-m} \delta^{-m} i^{-m}=0$, which implies $p \circ \delta \circ i=0$. Thus there exists a unique $\phi(\delta)$ making the diagram (\ref{morphism of triangles}) commute. The uniqueness implies the functoriality of $\phi$. Since ${\rm Hom}_{D^{b}({\rm mod}\Lambda)}(Y^{\bullet},\sigma^{\leq -m}(X^{\bullet})[j])=0$ for $j=1,2$, $\phi$ is an isomorphism. Consider a triangle
	\[X^{\bullet} \rightarrow Z^{\bullet} \rightarrow Y^{\bullet} \xrightarrow{\delta} X^{\bullet}[1]\]
	in $K^{b}({\rm proj}\Lambda)$. Combining this triangle with diagram (\ref{morphism of triangles}) and then applying the $3 \times 3$ lemma in $D^{b}({\rm mod}\Lambda)$, we obtain another triangle
	\[H^{[-m+1,0]}(X^{\bullet}) \rightarrow H^{[-m+1,0]}(Z^{\bullet}) \rightarrow H^{[-m+1,0]}(Y^{\bullet}) \xrightarrow{\phi(\delta)} H^{[-m+1,0]}(X^{\bullet})[1]\]
	in $D^{b}({\rm mod}\Lambda)$. This completes the proof.
\end{proof}

By \cite[Example 5.19, Theorem 3.6]{INP}, the category $K^{[-m,0]}({\rm proj}\Lambda)$ has an ARS duality $(\tau,\eta)$. We explicitly give this ARS duality as follows.

\begin{prop}
	For any $P^{\bullet}\in K^{[-m,0]}({\rm proj}\Lambda)$. The assignment
	\[P^{\bullet}\mapsto \boldsymbol{\rm p}_{m}(H^{[-m+1,0]}(\nu P^{\bullet}[-1]))\]
	gives rise to an ARS duality on $K^{[-m,0]}({\rm proj}\Lambda)$. 
\end{prop}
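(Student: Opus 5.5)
We plan to build the claimed ARS duality by passing through the extended module category, using the exact equivalences of Lemma \ref{exact equivalence}, instead of checking the Serre-type isomorphism directly. Write $\mathcal{K}:=K^{[-m,0]}({\rm proj}\Lambda)$ and $\mathcal{K}':=K^{[-m+1,1]}({\rm inj}\Lambda)$. Let $G:=[-1]\circ\nu:\mathcal{K}\to \mathcal{K}'$, which is an equivalence. The assignment in the statement is then the composite
\[\mathcal{K}\xrightarrow{G}\mathcal{K}'\xrightarrow{H^{[-m+1,0]}} m\text{-mod}\Lambda\xrightarrow{\boldsymbol{\rm p}_{m}}\mathcal{K}.\]
By (\ref{equi of cat by Gupta}), the last two arrows induce equivalences
\[\mathcal{K}'/({\rm add}\nu\Lambda)[-1]\simeq m\text{-mod}\Lambda\simeq \mathcal{K}/({\rm add}\Lambda)[m].\]
The first step is to identify the ideals involved. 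The projectives of $\mathcal{K}$ are ${\rm add}\Lambda$ and the injectives are $({\rm add}\Lambda)[m]$. Hence $\underline{\mathcal{K}}=\mathcal{K}/[{\rm add}\Lambda]$ and $\overline{\mathcal{K}}=\mathcal{K}/[({\rm add}\Lambda)[m]]$. Since $G(\Lambda)=\nu\Lambda[-1]$, the functor $G$ carries ${\rm add}\Lambda$ onto $({\rm add}\nu\Lambda)[-1]$. Therefore $G$ induces an equivalence $\underline{\mathcal{K}}\simeq \mathcal{K}'/({\rm add}\nu\Lambda)[-1]$, and composing gives an $R$-linear equivalence $\tau:\underline{\mathcal{K}}\to\overline{\mathcal{K}}$. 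Its effect on objects is exactly $P^{\bullet}\mapsto \boldsymbol{\rm p}_{m}(H^{[-m+1,0]}(\nu P^{\bullet}[-1]))$, and it is well defined on morphisms because each ideal is sent into the next one.

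The second step is to produce the binatural isomorphism
\[\eta_{A,B}:\underline{\mathcal{K}}(A,B)\simeq \mathbb{D}\mathbb{E}(B,\tau A),\]
where $\mathbb{E}={\rm Hom}_{K^b}(-,-[1])$. We would use Serre duality in $K^b({\rm proj}\Lambda)$ with $\nu$:
\[\mathbb{D}{\rm Hom}_{D^b}(B,\nu A)\cong {\rm Hom}_{K^b}(A,B),\]
natural in both variables. This gives $\mathbb{E}(B,G A)={\rm Hom}_{D^b}(B,\nu A)$, so $\mathbb{D}\mathbb{E}(B,GA)\cong {\rm Hom}_{K^b}(A,B)$. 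It remains to compare $\mathbb{E}(B,GA)$ with $\mathbb{E}(B,\tau A)$. The morphism $\tau A=\boldsymbol{\rm p}_m H^{[-m+1,0]}(GA)\to H^{[-m+1,0]}(GA)$, together with the truncation triangle relating $GA$ and $H^{[-m+1,0]}(GA)$ as in diagram (\ref{morphism of triangles}), should give maps
\[\mathbb{E}(B,\tau A)\longrightarrow \mathbb{F}(B',H^{[-m+1,0]}GA)\longleftarrow \mathbb{E}(B,GA).\]
Here $B'=H^{[-m+1,0]}B$. These maps are isomorphisms after restricting to the appropriate relative subfunctors, by Lemma \ref{exact equivalence} and its injective analogue.

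The main obstacle is the mismatch between the full $\mathbb{E}$ and these relative subfunctors. The Serre duality isomorphism sees all of ${\rm Hom}_{K^b}(A,B)$, whereas $\underline{\mathcal{K}}(A,B)$ is a quotient of it. Correspondingly, $\mathbb{E}(B,\tau A)$ differs from $\mathbb{E}(B,GA)$ by the classes that factor through $({\rm add}\Lambda)[m]$ or through $({\rm add}\nu\Lambda)[-1]$. We would handle this by a direct computation: $\mathbb{D}$ of the classes killed should match exactly the morphisms $A\to B$ factoring through ${\rm add}\Lambda$. To check this we would use the truncation triangle and the vanishing ${\rm Hom}(B,\sigma^{\le -m}(-)[j])=0$ for $j=1,2$, as in the proof of Lemma \ref{exact equivalence}.

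If this bookkeeping becomes too messy, we would take a shorter route. By \cite[Example 5.19, Theorem 3.6]{INP}, some ARS duality $(\tau_0,\eta_0)$ exists. By Proposition \ref{uniqueness of right ARS}, it is then enough to show $\overline{\mathcal{K}}(-,\tau A)\cong \mathbb{D}\mathbb{E}(A,-)$ naturally, since Lemma \ref{equi def} identifies $\tau A$ up to isomorphism in $\overline{\mathcal{K}}$. We would verify this on objects via the computation above, compatibly with morphisms, and then transport $\eta_0$ along the resulting isomorphism $\tau_0\cong\tau$.
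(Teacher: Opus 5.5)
Your first step, defining $\tau$ as the composite of the three equivalences and noting that $G=\nu(-)[-1]$ sends ${\rm add}\Lambda$ onto $({\rm add}\nu\Lambda)[-1]$, agrees with the paper.

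\textbf{The gap.} The second step is the actual content of the proposition, and the binatural isomorphism $\eta$ is never constructed. You apply Serre duality to the full ${\rm Hom}_{K^b}(A,B)$. That forces you to show that $\mathbb{E}(B,\tau A)$ is exactly the annihilator of $[{\rm add}\Lambda](A,B)$ inside $\mathbb{D}{\rm Hom}_{K^b}(A,B)\cong\mathbb{E}(B,GA)$, and you leave this as ``bookkeeping''. The tools you name do not carry it out:
\begin{itemize}
\item With $H=H^{[-m+1,0]}$, the truncations give $HGA\to GA$ and $B\to HB$. So there is no natural map out of $\mathbb{E}(B,GA)$ as in your diagram. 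Rather, $\mathbb{E}(B,\tau A)\cong{\rm Hom}(B,HGA[1])={\rm Hom}(B,\sigma^{\le -1}\nu A)$ injects into $\mathbb{E}(B,GA)={\rm Hom}(B,\nu A)$.
\item The injective analogue of Lemma \ref{exact equivalence} concerns pairs of objects of $\mathcal{K}'$, whereas $B\in\mathcal{K}$.
\item The vanishing of ${\rm Hom}(B,\sigma^{\le -m}(-)[j])$ is about the bottom degree, which is not where the discrepancy lies.
\end{itemize}
The discrepancy sits in degree $0$. The cokernel of the injection above embeds in ${\rm Hom}_\Lambda(H^0B,H^0\nu A)$. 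You would still have to prove, using naturality of Serre duality along maps $P\to B$ with $P\in{\rm add}\Lambda$, that the annihilator equals $\{\theta\mid H^0(\theta)=0\}$. You would then also have to check naturality in $A$. Your fallback defers to this same unfinished computation, so it does not close the gap.

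\textbf{The missing idea.} Absorb the stable quotient before applying Serre duality, so that every step is an honest isomorphism. The paper does
\[\underline{\mathcal{K}}(A,B)\cong{\rm Hom}_{D^b}(HGA,HGB)\cong{\rm Hom}_{D^b}(HGA,GB)\cong\mathbb{D}{\rm Hom}_{D^b}(B[-1],HGA)\cong\mathbb{D}{\rm Hom}_{D^b}(B[-1],\mathbf{p}_m HGA)=\mathbb{D}\mathbb{E}(B,\tau A).\]
The steps are justified as follows:
\begin{itemize}
\item the equivalence $\underline{\mathcal{K}}\simeq m\text{-mod}\Lambda$;
\item the triangle $HGB\to GB\to\sigma^{\ge 1}GB\to$ together with $HGA\in\mathcal{D}^{\le 0}$;
\item Serre duality for the perfect complex $B[-1]$, noting $GB=\nu(B[-1])$;
\item the brutal truncation triangle of $\mathbf{p}(HGA)$ plus degree reasons.
\end{itemize}

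\textbf{An alternative fix.} Your own reformulation via condition (2) of Lemma \ref{equi def} also becomes clean if you use Gupta's equivalence on the costable side:
\[\overline{\mathcal{K}}(B,\tau A)\cong{\rm Hom}(HB,HGA)\cong{\rm Hom}(B,HGA)\cong{\rm Hom}(B,GA)\cong\mathbb{D}\mathbb{E}(A,B).\]
Each step follows from $t$-structure vanishing or Serre duality. Lemma \ref{equi def} then yields $\eta$ directly, with no appeal to \cite{INP} or to uniqueness.
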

\begin{proof}
	Consider a sequence of functors
	\begin{equation}\label{functor tau}
		\begin{aligned}
			K^{[-m,0]}({\rm proj}\Lambda)/{\rm add}\Lambda & \xrightarrow[\sim]{\nu[-1]} K^{[-m+1,1]}({\rm inj}\Lambda)/({\rm add}\nu \Lambda)[-1] \xrightarrow[\sim]{H^{[-m+1,0]}} m\text{-mod}\Lambda \\
			& \xrightarrow[\sim]{\textbf{p}_{m}}  K^{[-m,0]}({\rm proj}\Lambda)/({\rm add}\Lambda)[m].
		\end{aligned}
	\end{equation}
	Denote the composition by $\tau$, then $\tau$ sends $P^{\bullet}$ to $\textbf{p}_{m}(H^{[-m+1,0]}(\nu P^{\bullet}[-1]))$.
	Moreover for any $P^{\bullet},Q^{\bullet}\in K^{[-m,0]}({\rm proj}\Lambda)$, we have
	\begin{align*}
		{\rm Hom}_{K^{[-m,0]}({\rm proj}\Lambda)/{\rm add}\Lambda}(P^{\bullet},Q^{\bullet}) &\cong {\rm Hom}_{K^{[-m+1,1]}({\rm inj}\Lambda)/({\rm add}\nu \Lambda)[-1]}(\nu P^{\bullet}[-1],\nu Q^{\bullet}[-1])\\
		&\cong {\rm Hom}_{m\text{-mod}\Lambda}(H^{[-m+1,0]}(\nu P^{\bullet}[-1]),H^{[-m+1,0]}(\nu Q^{\bullet}[-1]))\\
		&\cong {\rm Hom}_{D^{b}({\rm mod}\Lambda)}(H^{[-m+1,0]}(\nu P^{\bullet}[-1]),\nu Q^{\bullet}[-1])\\
		&\cong \mathbb{D}{\rm Hom}_{D^{b}({\rm mod}\Lambda)}(Q^{\bullet}[-1],H^{[-m+1,0]}(\nu P^{\bullet}[-1]))\\
		&\cong \mathbb{D}{\rm Hom}_{D^{b}({\rm mod}\Lambda)}(Q^{\bullet}[-1],\textbf{p}_{m}(H^{[-m+1,0]}(\nu P^{\bullet}[-1])))\\
		&\cong \mathbb{D}{\rm Hom}_{K^{b}({\rm proj}\Lambda)}(Q^{\bullet},\tau(P^{\bullet})[1]).
	\end{align*}
	We explain the third and the fifth isomorphism. In $D^{b}({\rm mod}\Lambda)$, consider the triangle
	\[H^{[-m+1,0]}(\nu Q^{\bullet}[-1])\rightarrow \nu Q^{\bullet}[-1]\rightarrow \sigma^{\geq 1}(\nu Q^{\bullet}[-1])\rightarrow H^{[-m+1,0]}(\nu Q^{\bullet}[-1])[1].\]
	Applying the functor ${\rm Hom}_{D^{b}({\rm mod}\Lambda)}(H^{[-m+1,0]}(\nu P^{\bullet}[-1]),-)$ to this triangle, then the third isomorphism follows. Since $H^{[-m+1,0]}(\nu P^{\bullet}[-1])$ and $\textbf{p}(H^{[-m+1,0]}(\nu P^{\bullet}[-1]))$ are isomorphic in $D^{b}({\rm mod}\Lambda)$, we consider the triangle in $K^{-}({\rm proj}\Lambda)$
	\begin{align*}
		\textbf{p}_{m}(H^{[-m+1,0]}(\nu P^{\bullet}[-1])) & \rightarrow \textbf{p}(H^{[-m+1,0]}(\nu P^{\bullet}[-1])) \rightarrow \sigma_{\leq -m-1}(\textbf{p}(H^{[-m+1,0]}(\nu P^{\bullet}[-1])))\\
		& \rightarrow \textbf{p}_{m}(H^{[-m+1,0]}(\nu P^{\bullet}[-1]))[1].
	\end{align*}
	Applying the functor ${\rm Hom}_{K^{-}({\rm proj}\Lambda)}(Q^{\bullet}[-1],-)$ to this triangle, then the fifth isomorphism follows. Denote by $\eta_{P^{\bullet},Q^{\bullet}}$ the composed isomorphism, it is clearly functorial. Thus we obtain an ARS duality $(\tau,\eta)$ on $K^{[-m,0]}({\rm proj}\Lambda)$.
\end{proof}

The first functor $\nu[-1]$ in (\ref{functor tau}) is clearly an exact equivalence between extriangulated categories. By Lemma \ref{exact equivalence}, the second one is also exact. Since the functor  $\textbf{p}_{m}$ in (\ref{functor tau}) is a quasi-inverse of the exact equivalence $H^{[-m+1,0]}: K^{[-m,0]}({\rm proj}\Lambda)/({\rm add}\Lambda)[m]\xrightarrow{\sim} m\text{-mod}\Lambda$, it is also exact (cf. \cite[Proposition 2.13]{NOS}). Therefore the functor $\tau$ is an exact equivalence.

By Lemma \ref{exact equivalence} and Theorem \ref{main thm1}, $(\tau,\eta)$ induces an ARS duality on $m\text{-mod}\Lambda$. We show it is exactly the Auslander-Reiten translation $\tau_{[m]}$ defined in \cite[Definition 3.7]{Zhou}. There is a commutative diagram
\[\begin{tikzcd}
	K^{[-m,0]}({\rm proj}\Lambda)/{\rm add}\Lambda \arrow[r,"\tau"] \arrow[d,"H^{[-m+1,0]}",swap] \arrow[dr, phantom, "\circlearrowright"] & K^{[-m,0]}({\rm proj}\Lambda)/({\rm add}\Lambda)[m] \arrow[d,"H^{[-m+1,0]}"]\\
	\underline{m\text{-mod}}\Lambda \arrow[r,"\tau_{[m]}",swap]& \overline{m\text{-mod}}\Lambda
\end{tikzcd}\]
in which the vertical functors are induced by $H^{[-m+1,0]}:K^{[-m,0]}({\rm proj}\Lambda)\rightarrow m\text{-mod}\Lambda$. This is essentially the first diagram in Theorem \ref{main thm1}. Thus the Auslander-Reiten translation $\tau_{[m]}$ can be deduced from the ARS duality $(\tau,\eta)$ on $K^{[-m,0]}({\rm proj}\Lambda)$.

\subsubsection*{Bounded quiver algebra}

Let $k$ be an algebraically closed field and $\Lambda$ be a $k$-algebra given by the quiver $Q$
\[\begin{tikzcd}
	& 2 \arrow[dr,"\beta"] & \\
	1 \arrow[ur,"\alpha"] & & 3 \arrow[ll,"\gamma"]
\end{tikzcd}\]
bounded by $\alpha \beta \gamma=0$. Then the Auslander-Reiten quiver $\Gamma({\rm mod}\Lambda)$ is as follows.
\begin{figure}[htbp]
	\renewcommand*{\arraystretch}{0.5}
	\centering
	\begin{tikzpicture}
		\node (1) at (1,4) {2};
		\node (2) at (3,4) {1};
		\node (3) at (5,4) {3};
		\node (4) at (7,4) {2};
		\node (5) at (0,3) {$\begin{matrix}
				2\\3
		\end{matrix}$};
		\node (6) at (2,3) {$\begin{matrix}
				1\\2
			\end{matrix}$};
		\node (7) at (4,3) {$\begin{matrix}
				3\\1
			\end{matrix}$};
		\node (8) at (6,3) {$\begin{matrix}
				2\\3
			\end{matrix}$};
		\node (9) at (1,2) {$\begin{matrix}
				1\\2\\3
			\end{matrix}$};
		\node (10) at (3,2) {$\begin{matrix}
				3\\1\\2
			\end{matrix}$};
		\node (11) at (5,2) {$\begin{matrix}
				2\\3\\1
			\end{matrix}$};
		\node (12) at (2,1) {$\begin{matrix}
				3\\1\\2\\3
			\end{matrix}$};
		\node (13) at (4,1) {$\begin{matrix}
				2\\3\\1\\2
			\end{matrix}$};
		\node (14) at (3,0) {$\begin{matrix}
				2\\3\\1\\2\\3
			\end{matrix}$};
		\draw[->] (1)--(6);
		\draw[->,dotted] (2)--(1);
		\draw[->] (2)--(7);
		\draw[->,dotted] (3)--(2);
		\draw[->] (3)--(8);
		\draw[->,dotted] (4)--(3);
		\draw[->] (5)--(1);
		\draw[->] (5)--(9);
		\draw[->,dotted] (6)--(5);
		\draw[->] (6)--(10);
		\draw[->] (6)--(2);
		\draw[->,dotted] (7)--(6);
		\draw[->] (7)--(11);
		\draw[->] (7)--(3);
		\draw[->,dotted] (8)--(7);
		\draw[->] (8)--(4);
		\draw[->] (9)--(6);
		\draw[->] (9)--(12);
		\draw[->,dotted] (10)--(9);
		\draw[->] (10)--(7);
		\draw[->] (10)--(13);
		\draw[->,dotted] (11)--(10);
		\draw[->] (11)--(8);
		\draw[->] (12)--(10);
		\draw[->] (12)--(14);
		\draw[->,dotted] (13)--(12);
		\draw[->] (13)--(11);
		\draw[->] (14)--(13);
	\end{tikzpicture}
	\caption*{$\Gamma({\rm mod}\Lambda)$}
\end{figure}

Let $\mathbb{E}:={\rm Ext}_{\Lambda}^{1}$. Regard $\underline{\rm mod}\Lambda$ (resp. $\overline{\rm mod}\Lambda$) as an extriangulated category $(\underline{\rm mod}\Lambda,\underline{\mathbb{E}^{{\rm add}\Lambda}},\underline{\rm id})$ (resp. $(\overline{\rm mod}\Lambda,\overline{\mathbb{E}_{{\rm add}\nu \Lambda}},\overline{\rm id})$). The Auslander-Reiten quivers ${\rm AR}_{\rm ET}(\underline{\rm mod}\Lambda)$ and ${\rm AR}_{\rm ET}(\overline{\rm mod}\Lambda)$ are as follows (cf. \cite[Section 3.3]{INP}).
\begin{figure}[htbp]
	\renewcommand*{\arraystretch}{0.5}
	\centering
	\begin{minipage}[b]{0.49\linewidth}
		\begin{tikzpicture}
			\node (1) at (1,4) {2};
			\node (2) at (3,4) {1};
			\node (3) at (5,4) {3};
			\node (4) at (7,4) {2};
			\node (5) at (0,3) {$\begin{matrix}
					2\\3
				\end{matrix}$};
			\node (6) at (2,3) {$\begin{matrix}
					1\\2
				\end{matrix}$};
			\node (7) at (4,3) {$\begin{matrix}
					3\\1
				\end{matrix}$};
			\node (8) at (6,3) {$\begin{matrix}
					2\\3
				\end{matrix}$};
			\node (10) at (3,2) {$\begin{matrix}
					3\\1\\2
				\end{matrix}$};
			\node (11) at (5,2) {$\begin{matrix}
					2\\3\\1
				\end{matrix}$};
			\node (13) at (4,1) {$\begin{matrix}
					2\\3\\1\\2
				\end{matrix}$};
			\draw[->] (1)--(6);
			\draw[->,dotted] (2)--(1);
			\draw[->] (2)--(7);
			\draw[->,dotted] (3)--(2);
			\draw[->] (3)--(8);
			\draw[->,dotted] (4)--(3);
			\draw[->] (5)--(1);
			\draw[->,dotted] (6)--(5);
			\draw[->] (6)--(10);
			\draw[->] (6)--(2);
			\draw[->,dotted] (7)--(6);
			\draw[->] (7)--(11);
			\draw[->] (7)--(3);
			\draw[->,dotted] (8)--(7);
			\draw[->] (8)--(4);
			\draw[->] (10)--(7);
			\draw[->] (10)--(13);
			\draw[->,dotted] (11)--(10);
			\draw[->] (11)--(8);
			\draw[->] (13)--(11);
		\end{tikzpicture}
		\caption*{${\rm AR}_{\rm ET}(\underline{\rm mod}\Lambda)$}
	\end{minipage}
	\begin{minipage}[b]{0.49\linewidth}
		\begin{tikzpicture}
			\node (1) at (1,4) {2};
			\node (2) at (3,4) {1};
			\node (3) at (5,4) {3};
			\node (4) at (7,4) {2};
			\node (5) at (0,3) {$\begin{matrix}
					2\\3
				\end{matrix}$};
			\node (6) at (2,3) {$\begin{matrix}
					1\\2
				\end{matrix}$};
			\node (7) at (4,3) {$\begin{matrix}
					3\\1
				\end{matrix}$};
			\node (8) at (6,3) {$\begin{matrix}
					2\\3
				\end{matrix}$};
			\node (9) at (1,2) {$\begin{matrix}
					1\\2\\3
				\end{matrix}$};
			\node (10) at (3,2) {$\begin{matrix}
					3\\1\\2
				\end{matrix}$};
			\node (12) at (2,1) {$\begin{matrix}
					3\\1\\2\\3
				\end{matrix}$};
			\draw[->] (1)--(6);
			\draw[->,dotted] (2)--(1);
			\draw[->] (2)--(7);
			\draw[->,dotted] (3)--(2);
			\draw[->] (3)--(8);
			\draw[->,dotted] (4)--(3);
			\draw[->] (5)--(1);
			\draw[->] (5)--(9);
			\draw[->,dotted] (6)--(5);
			\draw[->] (6)--(10);
			\draw[->] (6)--(2);
			\draw[->,dotted] (7)--(6);
			\draw[->] (7)--(3);
			\draw[->,dotted] (8)--(7);
			\draw[->] (8)--(4);
			\draw[->] (9)--(6);
			\draw[->] (9)--(12);
			\draw[->,dotted] (10)--(9);
			\draw[->] (10)--(7);
			\draw[->] (12)--(10);
		\end{tikzpicture}
		\caption*{${\rm AR}_{\rm ET}(\overline{\rm mod}\Lambda)$}
	\end{minipage}
\end{figure}

In this example, we can explicitly check some of our main results. For instance, the short exact sequence
\begin{equation*}
	\renewcommand*{\arraystretch}{0.5}
	0 \rightarrow \begin{matrix}
		2\\3
	\end{matrix} \rightarrow \begin{matrix}
		1\\2\\3
	\end{matrix} \rightarrow 1 \rightarrow 0
\end{equation*}
in ${\rm mod}\Lambda$ gives rise to an exangle
\begin{equation*}
	\renewcommand*{\arraystretch}{0.5}
	\begin{matrix}
		2\\3
	\end{matrix} \rightarrow 0 \rightarrow 1 \dashrightarrow
\end{equation*}
in $\underline{\rm mod}\Lambda$. The functor $\tau$ sends this exangle to the exangle
\begin{equation*}
	\renewcommand*{\arraystretch}{0.5}
	\begin{matrix}
		3\\1
	\end{matrix} \rightarrow 0 \rightarrow 2 \dashrightarrow
\end{equation*}
in $\overline{\rm mod}\Lambda$, which is induced by the short exact sequence
\begin{equation*}
	\renewcommand*{\arraystretch}{0.5}
	0 \rightarrow \begin{matrix}
		3\\1
	\end{matrix} \rightarrow \begin{matrix}
		2\\3\\1
	\end{matrix} \rightarrow 2 \rightarrow 0
\end{equation*}
in ${\rm mod}\Lambda$. We can also check that $\tau$ sends all the 7 almost split exangles in $\underline{\rm mod}\Lambda$ to all the 7 ones in $\overline{\rm mod}\Lambda$. For instance, the almost split exangle
\begin{equation*}
	\renewcommand*{\arraystretch}{0.5}
	\begin{matrix}
		2\\3
	\end{matrix} \rightarrow 2 \rightarrow \begin{matrix}
	1\\2
	\end{matrix} \dashrightarrow
\end{equation*}
in $\underline{\rm mod}\Lambda$ is sent to the almost split exangle
\begin{equation*}
	\renewcommand*{\arraystretch}{0.5}
	\begin{matrix}
		3\\1
	\end{matrix} \rightarrow 3 \rightarrow \begin{matrix}
		2\\3
	\end{matrix} \dashrightarrow
\end{equation*}
in $\overline{\rm mod}\Lambda$. These two exangles are induced by the almost split short exact sequences
\begin{equation*}
	\renewcommand*{\arraystretch}{0.5}
	0 \rightarrow \begin{matrix}
		2\\3
	\end{matrix} \rightarrow 2 \oplus \begin{matrix}
	1\\2\\3
	\end{matrix} \rightarrow \begin{matrix}
	1\\2
	\end{matrix} \rightarrow 0
\end{equation*}
and
\begin{equation*}
	\renewcommand*{\arraystretch}{0.5}
	0 \rightarrow \begin{matrix}
		3\\1
	\end{matrix} \rightarrow 3 \oplus \begin{matrix}
	2\\3\\1
	\end{matrix} \rightarrow \begin{matrix}
	2\\3
	\end{matrix} \rightarrow 0
\end{equation*}
in ${\rm mod}\Lambda$ respectively.

Moreover, this example satisfies the conditions in Corollary \ref{artin alg case}. Denote by $\mathcal{C}$ the additive subcategory generated by the $\tau$-orbits of all indecomposable projective modules. Regard $\widetilde{{\rm mod}\Lambda}:={\rm mod}\Lambda /[\mathcal{C}]$ as an extriangulated category $(\widetilde{{\rm mod}\Lambda},\widetilde{\mathbb{E}_{\mathcal{C}}\cap \mathbb{E}^{\mathcal{C}}},\widetilde{\rm id})$. Then the Auslander-Reiten quiver ${\rm AR}_{\rm ET}(\widetilde{{\rm mod}\Lambda})$ is as follows.
\begin{figure}[htbp]
	\renewcommand*{\arraystretch}{0.5}
	\centering
	\begin{tikzpicture}
		\node (1) at (1,4) {2};
		\node (2) at (3,4) {1};
		\node (3) at (5,4) {3};
		\node (4) at (7,4) {2};
		\node (5) at (0,3) {$\begin{matrix}
				2\\3
			\end{matrix}$};
		\node (6) at (2,3) {$\begin{matrix}
				1\\2
			\end{matrix}$};
		\node (7) at (4,3) {$\begin{matrix}
				3\\1
			\end{matrix}$};
		\node (8) at (6,3) {$\begin{matrix}
				2\\3
			\end{matrix}$};
		\draw[->] (1)--(6);
		\draw[->,dotted] (2)--(1);
		\draw[->] (2)--(7);
		\draw[->,dotted] (3)--(2);
		\draw[->] (3)--(8);
		\draw[->,dotted] (4)--(3);
		\draw[->] (5)--(1);
		\draw[->,dotted] (6)--(5);
		\draw[->] (6)--(2);
		\draw[->,dotted] (7)--(6);
		\draw[->] (7)--(3);
		\draw[->,dotted] (8)--(7);
		\draw[->] (8)--(4);
	\end{tikzpicture}
	\caption*{${\rm AR}_{\rm ET}(\widetilde{{\rm mod}\Lambda})$}
\end{figure}

We now confirm the conclusion of Corollary \ref{artin alg case}. Consider the algebra $A$ given by the same quiver $Q$ bounded by $\alpha \beta \gamma=0, \beta \gamma \alpha=0$ and $\gamma \alpha \beta=0$. Then $A$ is a quotient of $\Lambda$ and is self-injective. The Auslander-Reiten quiver is as follows.
\begin{figure}[htbp]
	\renewcommand*{\arraystretch}{0.5}
	\centering
	\begin{tikzpicture}
		\node (1) at (1,4) {2};
		\node (2) at (3,4) {1};
		\node (3) at (5,4) {3};
		\node (4) at (7,4) {2};
		\node (5) at (0,3) {$\begin{matrix}
				2\\3
			\end{matrix}$};
		\node (6) at (2,3) {$\begin{matrix}
				1\\2
			\end{matrix}$};
		\node (7) at (4,3) {$\begin{matrix}
				3\\1
			\end{matrix}$};
		\node (8) at (6,3) {$\begin{matrix}
				2\\3
			\end{matrix}$};
		\node (9) at (1,2) {$\begin{matrix}
				1\\2\\3
			\end{matrix}$};
		\node (10) at (3,2) {$\begin{matrix}
				3\\1\\2
			\end{matrix}$};
		\node (11) at (5,2) {$\begin{matrix}
				2\\3\\1
			\end{matrix}$};
		\draw[->] (1)--(6);
		\draw[->,dotted] (2)--(1);
		\draw[->] (2)--(7);
		\draw[->,dotted] (3)--(2);
		\draw[->] (3)--(8);
		\draw[->,dotted] (4)--(3);
		\draw[->] (5)--(1);
		\draw[->] (5)--(9);
		\draw[->,dotted] (6)--(5);
		\draw[->] (6)--(10);
		\draw[->] (6)--(2);
		\draw[->,dotted] (7)--(6);
		\draw[->] (7)--(11);
		\draw[->] (7)--(3);
		\draw[->,dotted] (8)--(7);
		\draw[->] (8)--(4);
		\draw[->] (9)--(6);
		\draw[->] (10)--(7);
		\draw[->] (11)--(8);
	\end{tikzpicture}
	\caption*{$\Gamma({\rm mod}A)$}
\end{figure}

It is a part of $\Gamma({\rm mod}\Lambda)$. From the AR quiver we can roughly see that $\widetilde{{\rm mod}\Lambda}$ is equivalent to the stable category of ${\rm mod}A$ and thus triangulated. Indeed, the inclusion ${\rm mod}A\hookrightarrow {\rm mod}\Lambda$ induces an equivalence
\begin{equation}\label{equi}
	\underline{\rm mod}A\xrightarrow{\simeq}\widetilde{{\rm mod}\Lambda}.
\end{equation}
Moreover we have ${\rm Ext}_{A}^{1}(Y,X)=\mathbb{E}_{\mathcal{C}}(Y,X)=\mathbb{E}^{\mathcal{C}}(Y,X)$ for any $A$-modules $X,Y$, which implies that $({\rm mod}\Lambda,\mathbb{E}_{\mathcal{C}}\cap \mathbb{E}^{\mathcal{C}},{\rm id})$ is a Frobenius exact category and thus $\widetilde{{\rm mod}\Lambda}$ is a triangulated category and (\ref{equi}) is a triangulated equivalence. Since the Auslander-Reiten translation $\tau_{\Lambda}$ induces an auto-equivalence $\tau_{\Lambda}^{'}$ on $\widetilde{{\rm mod}\Lambda}$ and there is a commutative diagram
\[\begin{tikzcd}
	\underline{\rm mod}A \arrow[r,"\tau_{A}","\simeq" swap] \arrow[d,"\simeq"] & \underline{\rm mod}A \arrow[d,"\simeq"] \\
	\widetilde{{\rm mod}\Lambda} \arrow[r,"\tau_{\Lambda}^{'}","\simeq" swap] & \widetilde{{\rm mod}\Lambda},
\end{tikzcd}\]
the functor $\tau_{\Lambda}^{'}$ is a triangulated auto-equivalence.

\section*{Acknowledgment}
The authors would like to thank professor Bin Zhu for helpful discussions and suggestions. Ji-Wei He is supported by National Natural Science Foundation of China (No. 12371017). Jixing Pan is supported by National Natural Science Foundation of China (No. 12601066).

\bibliographystyle{plain}

\begin{thebibliography}{100}
	
	\bibitem{ASS}
	I. Assem, D. Simson, A. Skowroński.
	\newblock{\em Elements of the representation theory of associative algebras. Vol. 1. Techniques of representation theory}.
	\newblock{London Math. Soc. Stud. Texts 65 Cambridge University Press Cambridge (2006) x+458 pp}.
	
	\bibitem{ARS}
	M. Auslander, Idun. Reiten, S. O. Smalø.
	\newblock{\em Representation theory of Artin algebras}.
	\newblock{Cambridge Stud. Adv. Math. 36 Cambridge University Press (1995) xiv+423 pp}.
	
	\bibitem{ASo}
	M. Auslander, Ø. Solberg.
	\newblock{\em Relative homology and representation theory. I. Relative homology and homologically finite subcategories}.
	\newblock{Comm. Algebra 21 (1993), no. 9, 2995–3031}.
	
	\bibitem{BS}
	R. Bennett-Tennenhaus, A. Shah.
	\newblock{\em Transport of structure in higher homological algebra}.
	\newblock{J. Algebra 574 (2021) 514–549}.
	
	\bibitem{Bo}
	R. Bocklandt.
	\newblock{\em Graded Calabi Yau algebras of dimension 3, with an appendix by M. Van den Bergh}.
	\newblock{J. Pure Appl. Algebra 212 (2008), no. 1, 14–32}.
	
	\bibitem{BK}
	A. I. Bondal, M. M. Kapranov.
	\newblock{\em Representable functors, Serre functors, and reconstructions}.
	\newblock{Izv. Akad. Nauk SSSR Ser. Mat. 53 (1989), no. 6, 1183–1205}.
	
	\bibitem{Buhler}
	T. Bühler.
	\newblock{\em Exact categories}.
	\newblock{Expo. Math. 28 (1) (2010) 1–69}.
	
	\bibitem{DRSSK}
	P. Dräxler, I. Reiten, S. O. Smalø, Ø. Solberg.
	\newblock{\em Exact categories and vector space categories, with an appendix by B. Keller}.
	\newblock{Trans. Amer. Math. Soc. 351 (1999), no. 2, 647–682}.
	
	\bibitem{GNP}
	M. Gorsky, H. Nakaoka, Y. Palu.
	\newblock{\em Hereditary extriangulated categories: silting objects, mutation, negative extensions}.
	\newblock{aeXiv:2303.07134v2}.
	
	\bibitem{Gupta}
	E. Gupta.
	\newblock{\em On $d$-term silting objects, torsion classes, and cotorsion classes}.
	\newblock{arXiv:2407.10562v3}.
	
	\bibitem{HLN}
	M. Herschend, Y. Liu, H. Nakaoka.
	\newblock{\em $n$-exangulated categories (I): definitions and fundamental properties}.
	\newblock{J. Algebra 570 (2021) 531-586}.
	
	\bibitem{H}
	A. Hubery.
	\newblock{\em The invariance of the Auslander-Reiten formula for hereditary algebras}.
	\newblock{arXiv:2602.16332v1}.
	
	\bibitem{INP}
	O. Iyama, H. Nakaoka, Y. Palu.
	\newblock{\em Auslander-Reiten theory in extriangulated categories}.
	\newblock{Trans. Amer. Math. Soc. Ser. B 11 (2024) 248–305}.
	
	\bibitem{K}
	C. Klapproth.
	\newblock{\em $n$-extension closed subcategories of $n$-exangulated categories}.
	\newblock{arXiv:2209.01128v3}.
	
	\bibitem{LN}
	Y. Liu, H. Nakaoka.
	\newblock{\em Hearts of twin cotorsion pairs on extriangulated categories}.
	\newblock{J. Algebra 528 (2019) 96-149}.
	
	\bibitem{NOS}
	H. Nakaoka, Y. Ogawa, A. Sakai.
	\newblock{\em Localization of extriangulated categories}.
	\newblock{J. Algebra 611 (2022) 341–398}.
	
	\bibitem{NP}
	H. Nakaoka, Y. Palu.
	\newblock{\em Extriangulated categories, Hovey twin cotorsion pairs and model structures}.
	\newblock{Cah. Topol. Géom. Différ. Catég. 60 (2) (2019) 117-193}.
	
	\bibitem{Zhou}
	Y. Zhou.
	\newblock{\em Tilting theory for extended module categories}.
	\newblock{arXiv:2411.15473v2}.
	
\end{thebibliography}

\end{document}